\theoremstyle{plain}
\newtheorem{cor}{Corollary}[section]
\newtheorem{thm}[cor]{Theorem}
\newtheorem{prop}[cor]{Proposition}
\newtheorem{claim}[cor]{Claim}
\newtheorem{lem}[cor]{Lemma}
\theoremstyle{definition}
\newtheorem{defn}[cor]{Definition}
\theoremstyle{remark}
\newtheorem{remark}[cor]{Remark}
\newtheorem{ex}[cor]{Example}
\newtheorem{example}[cor]{Example}
\newcommand\bqn{\begin{equation*}}
\newcommand\eqn{\end{equation*}}
\newcommand\bq{\begin{equation}}
\newcommand\eq{\end{equation}}
\newcommand\be{\begin{enumerate}}
\newcommand\ee{\end{enumerate}}
\newcommand\bei{\begin{itemize}}
\newcommand\eei{\end{itemize}}
\newcommand\ba{\begin{aligned}}
\newcommand\ea{\end{aligned}}
\newcommand\ban{\begin{aligned*}}
\newcommand\ean{\end{aligned*}}
\newcommand{\bsm}{\left(\begin{smallmatrix}}
\newcommand{\esm}{\end{smallmatrix}\right)}                   
\newcommand{\bpm}{\begin{pmatrix}}
\newcommand{\epm}{\end{pmatrix}}
\newcommand{\thismonth}{\ifcase\month 
  \or January\or February\or March\or April\or May\or June%
  \or July\or August\or September\or October\or November%
  \or December\fi}
\newcommand{\calA}{\mathcal A}
\newcommand{\calB}{\mathcal B}
\newcommand{\calG}{\mathcal G}
\newcommand{\calH}{\mathcal H}
\newcommand{\calL}{\mathcal L}
\newcommand{\calO}{\mathcal O}
\newcommand{\calR}{\mathcal R}
\newcommand{\calS}{\mathcal S}
\newcommand{\calT}{\mathcal T}
\newcommand{\calV}{\mathcal V}
\newcommand{\calX}{\mathcal X}
\newcommand{\FF}{\mathbb F}
\newcommand{\PP}{\mathbb P}
\newcommand{\RR}{\mathbb R}
\newcommand{\bC}{\mathbf C}
\newcommand{\bF}{\mathbf F}
\newcommand{\bK}{\mathbf K}
\newcommand{\bL}{\mathbf L}
\newcommand{\bN}{\mathbf N}
\newcommand{\bQ}{\mathbf Q}
\newcommand{\bR}{\mathbf R}
\newcommand{\bU}{\mathbf U}
\newcommand{\bZ}{\mathbf Z}
\newcommand{\SL}{\operatorname{SL}}
\newcommand{\Sp}{\operatorname{Sp}}
\newcommand{\PSp}{\operatorname{PSp}}
\newcommand{\PSL}{\operatorname{PSL}}
\newcommand{\PO}{\operatorname{PO}}
\newcommand{\GL}{\operatorname{GL}}
\newcommand{\PU}{\operatorname{PU}}
\newcommand\Sym{\mathrm{Sym}}
\newcommand{\Hom}{\mathrm{Hom}}
\newcommand\diag{\mathrm{diag}}
\newcommand\Id{\mathrm{Id}}
\newcommand{\Isom}{\mathrm{Isom}}
\newcommand\ov{\overline}
\newcommand\per{\mathrm{per}}
\newcommand{\sign}{\operatorname{sign}}
\newcommand\tr{\mathrm{tr}}
\renewcommand{\>}{\rangle}
\newcommand{\supp}{\mathrm{supp}} 
\renewcommand{\H}{{\mathcal H}^2} 
\newcommand{\pr}{{\rm pr}} 
\newcommand{\deH}{\partial \H}
\newcommand\Syst{\mathrm{Syst}}
 \newcommand{\Oo}{\mathcal O} 
 \newcommand{\Ii}{\mathcal I} 
\newcommandx{\ioo}[2]{I_{(#1,#2)}}
\newcommandx{\ioc}[2]{I_{(#1,#2]}}
\newcommandx{\ico}[2]{I_{[#1,#2)}}
\newcommandx{\icc}[2]{I_{[#1,#2]}}
\newcommand{\hg}{{H_\Gamma}}
\newcommand{\calCR}{\mathcal{CR}}
 \newcommand{\Curr}{\mathcal C} 
 \newcommand{\bRr}{\mathcal R} 
\newcommand{\snr}{\calS^n_\bR} 
\newcommand\snf{\calS^n_\bF} 
\newcommand{\Dd}{\mathcal D} 
\newcommand{\Yy}{\mathcal Y} 
 \newcommand{\p}{{\rm p}} 
\newcommand{\g}{\gamma}
\newcommand{\Rec}{\calR} 
\newcommand{\ovcirc}{\mathring}
\newcommand{\cR}[1]{r(#1)} 
\newcommand{\cRo}{r} 
\date{\today}
\begin{document}
\title[Crossratios, barycenters and maximal representations]{Positive crossratios, barycenters, trees and applications to maximal representations}

\author[]{M. Burger}
\address{Department Mathematik, ETH Zentrum, 
R\"amistrasse 101, CH-8092 Z\"urich, Switzerland}
\email{burger@math.ethz.ch}

\author[]{A. Iozzi}
\address{Department Mathematik, ETH Zentrum, 
R\"amistrasse 101, CH-8092 Z\"urich, Switzerland}
\email{iozzi@math.ethz.ch}

\author[]{A. Parreau}
\address{Institut Fourier, CS 40700, 38058 Grenoble cedex 09, France}
\email{Anne.Parreau@univ-grenoble-alpes.fr}

\author[]{M. B. Pozzetti}
\address{Mathematical Institute, Heidelberg University, Im Neuenheimerfeld 205, 69120 Heidelberg, Germany }
\email{pozzetti@mathi.uni-heidelberg.de}
\thanks {
Marc Burger and Alessandra Iozzi were partially supported by the SNF grant 2-77196-16. 
Alessandra Iozzi acknowledges moreover support from U.S. National Science Foundation grants DMS 1107452, 1107263, 1107367 
"RNMS: Geometric Structures and Representation Varieties" (the GEAR Network). Beatrice Pozzetti is supported by the Deutsche Forschungsgemeinschaft under Germany’s Excellence Strategy EXC-2181/1 - 390900948 (the Heidelberg STRUCTURES Cluster of Excellence), and acknowledges further support by DFG grant 338644254 (within the framework of SPP2026).
Marc Burger, Alessandra Iozzi and Beatrice Pozzetti  would like to thank the Isaac Newton Institute for Mathematical Sciences, Cambridge, 
for support and hospitality during the programme ``Non-Positive Curvature Group Actions and Cohomology'' 
where work on this paper was undertaken. This work was supported by EPSRC grant no P/K032208/1.}

\date{\today}

\begin{abstract}
We study metric properties of maximal framed representations of fundamental groups of surfaces in symplectic groups over real closed fields, 
interpreted as actions on  Bruhat--Tits buildings endowed with adapted Finsler norms. 
We prove that the translation length can be computed as intersection with a geodesic current, 
give sufficient conditions guaranteeing that such a current is a multicurve, and, if the current is a measured lamination,  
construct an isometric embedding of the associated tree in the building. 
These results are obtained as application of  more general results  of independent interest on positive crossratios and actions with compatible barycenters. 
\end{abstract}
\maketitle


\section{Introduction}\label{sec:intro}
\subsection*{Maximal framed representations in real closed fields}
Let $\Sigma:=\Gamma\backslash\H$ be the quotient of the Poincar\'e upper half plane $\H$ by a torsion-free
lattice $\Gamma<\PSL(2,\bR)$ and let $G$ be a simple real algebraic group. The aim of Higher Teichm\"uller Theory is to single out and study special components
or specific semialgebraic subsets of the representation variety $\Hom(\Gamma,G)$
that consist of injective homomorphisms with discrete image; such components thus generalize the Teichm\"uller space. 
Prominent examples are Hitchin components for  real split  groups $G$ (for example $G=\PSL(n,\bR)$ or $\PSp(2n,\bR)$), 
maximal representations for  Hermitian groups $G$ (for example $G=\PU(p,q)$ or $ \PSp(2n,\bR)$) and $\Theta$-positive representations for $G=\PO(p,q)$. 
The goal of this paper is to study asymptotic properties of maximal representations into $\PSp(2n,\bR)$ with the aid of geodesic currents. 
This applies in particular to the $\PSp(2n,\bR)$-Hitchin component if $\Sigma$ is compact.

In the study of appropriate compactifications of   character varieties,   representations of $\Gamma$ into algebraic groups
over non-Archimedean real closed fields play an important role \cite{Brumfiel, Alessandrini, APcomp, BP, BIPP-ann}: 
for example, ultralimits of representations in $\PSp(2n,\bR)$ can be understood as  representations $\rho_{\omega,\sigma}\colon\Gamma\to\PSp(2n,\bR_{\omega,\sigma})$,
where $\bR_{\omega,\sigma}$ is a Robinson field (see \S\ref{ex:RobF} and \cite{APcomp}). However, the viewpoint of the real spectrum compactification of character varieties leads typically to the study of representations into real closed fields $\bF$ that are small when compared to Robinson fields, for example $\bF$ is often of finite transcendence degree over the field of real algebraic numbers.

Given a representation $\rho\colon\Gamma\to\PSp(2n,\bF)$ where $\bF$ is a general real closed field, 
having a maximal Toledo invariant as defined in \cite[Definition~18]{BIPP-ann}, admitting a maximal framing defined on a $\Gamma$-invariant non-empty subset of $\deH$, 
or admitting a maximal framing defined on the set of fixed points of hyperbolic elements 
are all equivalent conditions \cite{BIPP-RSPECmax} (see \cite[Theorem~20]{BIPP-ann} for a precise statement). 
This paper solely relies upon the third definition, which we now recall.
If $\bF^{2n}$ is endowed with the standard symplectic form, let
$\calL(\bF^{2n})$ be the space of Lagrangians in $\bF^{2n}$.  The
Maslov cocycle (see \S\ref{subsubsec:max-tr-quadr} and \cite{LV}) classifies the orbits of
$\PSp(2n,\bF)$ on triples of pairwise transverse Lagrangians. Such a
triple is {\em maximal} if the cocycle takes its maximal value $n$. Let $\deH$ be the boundary of the hyperbolic plane, which we endow with the cyclic ordering on triples of points induced by the orientation of $\H$.

\begin{defn}  Let $\hg\subset\deH$ be the set of fixed points of hyperbolic elements of $\Gamma$. 
A representation  $\rho\colon\Gamma\to\PSp(2n,\bF)$  is \emph{maximal framed}
if there is a 
$\rho$-equivariant map $\varphi\colon\hg\to\calL(\bF^{2n})$ sending positively oriented triples 
in $\hg$ to maximal triples 
of Lagrangians.
\end{defn}

We assume that the real closed field $\bF$ admits an order compatible $\bR$-valued valuation $v$ 
with value group $\Lambda=v(\bF^\times)<\bR$.  
Then the group $\PSp(2n,\bF)$  acts by isometries on a $\Lambda$-metric space\footnote{In \cite{BIPP-ann} denoted with $\calB_{\PSp(2n,\bF)}$.}
$\calB_n^\bF$ obtained as  quotient of the Siegel upper half space $\snf$ associated to $\PSp(2n,\bF)$ (see \S~\ref{subsec:Siegel} and \cite[\S~3.4]{BIPP-ann}).  
If $\bF=\bR$, $\calB_n^\bR$ coincides with $\snr$, 
while if $\bF$ is non-Archimedean this  metric space $\calB_n^\bF$ sits naturally inside the Bruhat-Tits building associated to $\PSp(2n,\bF)$ as a dense subset.\footnote{Note, however, that the metric that we consider here is only biLipschitz to the restriction of the CAT(0) metric on the Bruhat-Tits building. See \S\ref{subsec:Siegel} for the Finsler metric relevant to our purposes.} 
The latter relationship will play no role in this paper, but  will be discussed in detail in \cite{BIPP-RSPEC} (see also \cite[\S2.1]{BIPP-ann} and \cite{KT1}). 
The translation length of an element $g\in\PSp(2n,\bF)$ acting on $\calB_n^\bF$ induces then the \emph{length function}
\bqn
L(g):=-2\sum_{i=1}^nv(|\lambda_i|)\,,
\eqn
where $\lambda_1,\dots,\lambda_n,\lambda_n^{-1},\dots,\lambda_1^{-1}\in \bF(\sqrt{-1})$ are  the eigenvalues of a representative $\ov g\in\Sp(2n,\bF)$ of $g\in\PSp(2n,\bF)$ counted with multiplicity and ordered in such a way that $|\lambda_1|\geq\dots\geq|\lambda_n|\geq1$. Here we denote by $|\cdot|\colon\bF(\sqrt{-1})\to \bF_+$ the absolute value,
that is the square root of the norm function on the quadratic extension $\bF(\sqrt{-1})$ of $\bF$. 
%
%
\medskip

In our first result we construct a geodesic current on $\Sigma$ encoding the length function $\gamma\mapsto L(\rho(\gamma))$
of a maximal framed representation.
%
Recall that a \emph{geodesic current} is a $\Gamma$-invariant positive Radon measure on the space $(\deH)^{(2)}$ of pairs of distinct points in $\deH$. 
The \emph{Bonahon intersection} $i(\mu,\lambda)$ of two geodesic currents $\mu$ and $\lambda$ extends the topological intersection number
of homotopy classes of curves on $\Sigma$: in fact a (non-oriented) closed geodesic $c\subset\Sigma$ gives rise to the current 
$\delta_c:=\frac{1}{2}\sum_{(a,b)}\delta_{(a,b)}$,
where we sum on the set of oriented geodesics $(a,b)\in (\deH)^{(2)}$ lifting $c$.
For such currents $i(\delta_c,\delta_{c'})$
is the topological intersection number of $[c]$ and $[c']$.

\begin{thm}\label{thm_intro:thm1} Let $\bF$ be a real closed field with an order compatible valuation $v$ and
let  $\rho\colon\Gamma\to\PSp(2n,\bF)$ be maximal framed. 
Then there is a geodesic current $\mu_\rho$ such that, 
for any closed geodesic $c\subset\Sigma$ and for every  $\gamma\in\Gamma$ representing $c$,
\bq\label{eq:intersection=length}
i(\mu_\rho,\delta_c)=L(\rho(\gamma))\,.
\eq
The current $\mu_\rho$ is non-zero if and only if there exists $\gamma\in \Gamma$ with $v(\tr(\rho(\gamma)))<0$.
\end{thm}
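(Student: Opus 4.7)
My approach is to construct $\mu_\rho$ as the geodesic current attached to a positive cross-ratio on $\deH$ manufactured from the framing $\varphi$ and the valuation $v$, then to apply the general cross-ratio-to-current machinery the paper advertises as one of its independent-interest tools.

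\textbf{Building the cross-ratio.} For pairwise-transverse Lagrangians $L_1,L_2,L_3,L_4\in\calL(\bF^{2n})$ symplectic linear algebra produces a canonical element $\mathrm{CR}(L_1,L_2,L_3,L_4)\in\GL(L_1)$, essentially the comparison between the two isomorphisms $L_1\to L_3$ obtained by projection along $L_2$ and along $L_4$. When $(L_1,L_2,L_3)$ and $(L_1,L_3,L_4)$ are both Maslov-maximal, the eigenvalues of $\mathrm{CR}$ lie in the positive cone $\bF^\times_+$. I would set
\[
B_\rho(x_1,x_2,x_3,x_4)\ :=\ -v\bigl(|\det \mathrm{CR}(\varphi(x_1),\varphi(x_2),\varphi(x_3),\varphi(x_4))|\bigr)
\]
on positively ordered quadruples in $\hg$. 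The maximality hypothesis on triples transported by $\varphi$ guarantees that the Maslov condition above is met, so $B_\rho\geq 0$; the multiplicative cocycle identity for the symplectic cross-ratio, combined with additivity of $v\circ|\det|$, produces the additive cocycle axiom of a positive cross-ratio; and $\rho$-equivariance of $\varphi$ gives $\Gamma$-invariance.

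\textbf{From cross-ratio to current, and the intersection formula.} Using density of $\hg^{(4)}$ in $(\deH)^{(4)}$ together with the monotonicity of positive cross-ratios under refinement of intervals (itself a consequence of positivity), $B_\rho$ extends to a positive $\Gamma$-invariant cross-ratio on $(\deH)^{(4)}$, to which the paper's general construction applies to produce the desired $\Gamma$-invariant Radon measure $\mu_\rho$ on $(\deH)^{(2)}$. The intersection formula then reduces to the general principle that, for currents attached to cross-ratios, the Bonahon intersection with the Dirac current of a closed geodesic is computed over one period:
\[
i(\mu_\rho,\delta_c)\ =\ B_\rho(\gamma^-,y,\gamma^+,\gamma y)
\]
for any $y\in\hg$ in general position with respect to $\gamma^\pm$. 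Since $\rho(\gamma)$ stabilises $\varphi(\gamma^\pm)$, an eigenbasis decomposition adapted to $\varphi(\gamma^-)\oplus\varphi(\gamma^+)$ identifies $\mathrm{CR}(\varphi(\gamma^-),\varphi(y),\varphi(\gamma^+),\rho(\gamma)\varphi(y))$ with a linear map on $\varphi(\gamma^+)$ whose eigenvalues are the squares $\lambda_i^{2}$ of the $n$ stable eigenvalues of $\rho(\gamma)$; taking $-v(|\det|)$ then yields precisely $-2\sum_{i=1}^n v(|\lambda_i|)=L(\rho(\gamma))$.

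\textbf{Non-triviality and main obstacle.} Once \eqref{eq:intersection=length} is established, $\mu_\rho=0$ is equivalent to $L(\rho(\gamma))=0$ for all hyperbolic $\gamma$, that is $v(|\lambda_i|)=0$ for every $\gamma$ and every $i$. For any $\gamma$ with $|\lambda_1|>1$ the non-Archimedean inequality forces $v(\tr\rho(\gamma^k))=k\,v(\lambda_1)<0$ for $k$ large enough to suppress cancellations, and conversely $v(\tr\rho(\gamma))<0$ forces some $v(\lambda_i)<0$; this gives the stated criterion. The principal technical obstacle is the verification that the valuation of the symplectic cross-ratio really satisfies the axioms of a positive cross-ratio in the precise sense the paper's current-construction requires, and the extension from $\hg$ to $\deH$ --- which must be carried out through $B_\rho$ itself rather than through $\varphi$, since $\varphi$ is a priori only defined on $\hg$. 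A secondary subtlety is the careful use of $|\cdot|$ on the quadratic extension $\bF(\sqrt{-1})$ to treat eigenvalues that may not lie in $\bF$, although these contribute nothing to $L$ since they satisfy $|\lambda_i|=1$.
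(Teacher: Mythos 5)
Your construction of the cross-ratio $B_\rho$ via $-v(|\det R(\varphi(x_1),\dots,\varphi(x_4))|)$ is exactly the paper's, and your period computation identifying $i(\mu_\rho,\delta_c)$ with $-2\sum_i v(|\lambda_i|)$ follows the same route (via the paper's Lemma~\ref{lem:6.1}, reducing to a diagonal Shilov-hyperbolic form). Two steps, however, have real gaps.

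First, the attempted extension of $B_\rho$ from $\hg$ to $(\deH)^{(4)}$ by ``density and monotonicity'' is both unnecessary and ill-posed. Monotone functions have only one-sided limits at points of discontinuity, and the crossratio of a maximal framed representation over a non-Archimedean field typically \emph{is} discontinuous (for integer-valued crossratios continuity is impossible). Example~\ref{ex:crmu} shows explicitly that a single current gives two genuinely distinct crossratios $[\,\cdot\,]_\mu^+$ and $[\,\cdot\,]_\mu^-$ according to whether intervals are taken half-open on the left or on the right; a monotone extension is therefore not determined by the values on $\hg$. The whole point of the paper's framework (Theorem~\ref{thm_intro:thm6}, Proposition~\ref{prop:Rm}) is that the Radon measure is built \emph{directly} from a crossratio defined only on a dense $\Gamma$-invariant subset, by inner/outer regularity and the Riesz representation theorem, precisely to bypass this issue. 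You should invoke that machinery as is, not attempt an extension.

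Second, the non-triviality argument ``$|\lambda_1|>1$ forces $v(\tr\rho(\gamma^k))<0$ for $k$ large enough to suppress cancellations'' is not a proof: with several eigenvalues of comparable norm the leading term can cancel for all $k$. The paper instead argues by contradiction using integral closure: if $v(\tr\rho(\gamma^s))\geq 0$ for all $s$, then the coefficients of the characteristic polynomial of $\wedge^n\rho(\gamma)$ lie in the valuation ring $\calO=\{x:\|x\|_v\leq 1\}$; since $\calO$ is integrally closed in $\bF$ and $\lambda_1\cdots\lambda_n\in\bF$ is a root of that monic polynomial, $\|\lambda_1\cdots\lambda_n\|_v\leq 1$, contradicting $\per(\gamma)>0$. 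You need some argument of this kind. A minor slip in your final paragraph: eigenvalues of $\rho(\gamma)$ lying in $\bF(\sqrt{-1})\setminus\bF$ do \emph{not} necessarily have $|\lambda_i|=1$; they come in conjugate pairs of equal absolute value, which may well exceed $1$ and then genuinely contribute to $L(\rho(\gamma))$. This does not affect the argument, since $L$ is defined through $|\lambda_i|\in\bF$, but the dismissal as written is incorrect.
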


If $n=1$, $\bF=\bR$ and $\rho\colon\Gamma\to\PSL(2,\bR)$ is the lattice embedding,
then $\mu_\rho$ is the Liouville current \cite{Bon88-curr}, that is the unique $\PSL(2,\bR)$-invariant geodesic current.
If $\Sigma$ is compact and $\bF=\bR$, Theorem \ref{thm_intro:thm1} was proven by Martone--Zhang \cite[Theorem~1.1]{Martone_Zhang}.
Notice that in the case of $\SL(2,\FF)$ $\mu_\rho$ is always a measured lamination (see \S~\ref{subsec:8.2}).

For the next result we will need the notion of {\em systole} of a maximal framed representation $\rho$ (following \cite{BIPP1})
\bqn
\Syst(\rho):=\inf\;\{L(\rho(\gamma)):
\, \gamma\in\Gamma,
\, \gamma\text{ hyperbolic}\}\,.
\eqn 
The systole of any real maximal framed representation $\rho\colon\Gamma\to\PSp(2n,\bR)$ is positive (see \S~\ref{s.6.2}). 
On the other hand, for non-Archimedean real closed fields $\bF$, many different possibilities can happen: 
 if $\Sigma$ is compact, all  maximal framed representations $\rho\colon\Gamma\to\SL(2,\bF)$ have vanishing systole (since $\mu_\rho$ is a measured lamination), 
 while, in higher rank,  there are many examples of non-Archimedean  maximal framed representations with positive systole (see \cite[Corollary 1.11]{BIPP1}). 
 For these representations we have:
\begin{cor}\label{cor:cor4}  
Assume that $\Sigma$ is compact, and  $\bF$ is  non-Archimedean.
Let $\rho\colon\Gamma\to\PSp(2n,\bF)$ be a maximal framed representation.
If $\Syst_\Sigma(\rho)>0$, then 
for every $x\in \calB_n^\bF$
the orbit map 
\bqn
\ba
\Gamma&\longrightarrow\,\,\calB_n^\bF\\
x&\longmapsto\rho(\gamma)x
\ea
\eqn
is a  quasi-isometric embedding.
\end{cor}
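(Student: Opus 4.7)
The strategy is to prove both directions of the quasi-isometric inequality separately. The upper bound $d_{\calB_n^\bF}(x, \rho(\gamma)x) \leq C\,|\gamma|_S + C'$ is the standard Svarc--Milnor estimate, with $C = \max_{s \in S} d_{\calB_n^\bF}(x, \rho(s)x)$ for any finite symmetric generating set $S$ of $\Gamma$.

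The substance is in the lower bound. Starting from $d_{\calB_n^\bF}(x, \rho(\gamma)x) \geq L(\rho(\gamma))$ and using Theorem~\ref{thm_intro:thm1} (every non-trivial $\gamma$ is hyperbolic, since $\Sigma$ is compact), one rewrites $L(\rho(\gamma)) = i(\mu_\rho, \delta_{c_\gamma})$ where $c_\gamma$ is the closed geodesic represented by $\gamma$. The hypothesis $\Syst_\Sigma(\rho) > 0$ then forces $i(\mu_\rho, \delta_c) \geq \Syst_\Sigma(\rho)$ for every primitive closed geodesic $c$, so $\mu_\rho$ is a \emph{filling} geodesic current. A standard compactness/continuity argument on the space of projectivised geodesic currents (compactness of the unit sphere for a background hyperbolic length and continuity of the intersection pairing) then promotes this to a uniform bound $i(\mu_\rho, \delta_\gamma) \geq c\,\ell_\Sigma(\gamma)$ for some $c > 0$, where $\ell_\Sigma$ denotes the length function of an auxiliary hyperbolic structure on $\Sigma$.

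To upgrade this translation-length bound into a genuine \emph{displacement} bound, I would use the barycenter machinery developed earlier in the paper to build a $\rho$-equivariant coarsely Lipschitz map $f \colon \H \to \calB_n^\bF$: given $z \in \H$, pick a triple of points in $\hg$ spanning an ideal triangle containing $z$ and take the barycenter in $\calB_n^\bF$ of the maximal Lagrangian triple produced by $\varphi$. Equivariance of $f$ is immediate, and coarse Lipschitzness follows from compactness of a fundamental domain for $\Gamma$. Since the orbit map factors as $\Gamma \to \H \to \calB_n^\bF$ with the first arrow being the Svarc--Milnor quasi-isometry $\gamma \mapsto \gamma x_0$, it suffices to show the lower bound $d_{\calB_n^\bF}(f(z), f(z')) \geq c'\, d_\H(z, z') - c''$. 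This comes from a ``crossing argument'': a geodesic segment in $\H$ of length $L$ crosses lifts of a fixed filling multicurve on $\Sigma$ roughly $L$ times, and each crossing contributes a definite amount to the distance in $\calB_n^\bF$ by the systole estimate applied segmentwise, via the intersection formula of Theorem~\ref{thm_intro:thm1}.

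The main technical obstacle is making this crossing argument precise in the Finsler (non-CAT(0)) setting of $\calB_n^\bF$: the metric is only bi-Lipschitz to the CAT(0) metric on the Bruhat--Tits building (as the footnote in \S1 emphasises), so standard CAT(0) convexity tools are not directly available. Summing intersection contributions along a \emph{non-closed} geodesic segment, and controlling the infinitesimal displacement of the barycenter map $f$ so as to turn a global translation-length bound into a segmentwise displacement bound, are the delicate points of the argument.
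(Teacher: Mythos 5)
Your structure is correct as far as it goes: the upper bound is Svarc--Milnor, the lower bound begins from the observation $d_{\calB_n^\bF}(x,\rho(\gamma)x)\geq L(\rho(\gamma))$ (displacement dominates translation length), and you reduce to comparing $L(\rho(\gamma))$ with the hyperbolic length $\ell_\Sigma(\gamma)$. Your derivation of $L(\rho(\gamma))\geq c\,\ell_\Sigma(\gamma)$ from positive systole and the filling/compactness argument on projectivized currents is exactly the content of the result the paper cites at this step, namely \cite[Theorem~1.3]{BIPP1}.

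The genuine divergence --- and the genuine gap --- is in what you call the ``upgrade'' step, and you correctly diagnose why it is needed: $\ell_\Sigma(\gamma)$ is a conjugacy invariant, while a quasi-isometric embedding requires a lower bound in terms of the word length $|\gamma|_S$, and a conjugate $h\gamma_0 h^{-1}$ of a short element can have arbitrarily large word length. To close this gap you propose building a $\rho$-equivariant coarsely Lipschitz map $f\colon\H\to\calB_n^\bF$ via barycenters and running a crossing argument; but, as you yourself flag, Theorem~\ref{thm_intro:thm1} only relates intersection number to translation length for \emph{closed} geodesics, so turning it into a segmentwise displacement estimate along a non-closed geodesic is precisely the unresolved technical point, and your sketch does not close it. The paper sidesteps the entire issue by appealing to Delzant--Guichard--Labourie--Mozes \cite{DGLM}: once one knows that translation lengths are bounded below by a positive multiple of hyperbolic (equivalently, conjugacy-minimal word) length, the action is \emph{displacing} in their sense, and their theorem asserts directly that orbit maps of displacing actions of cocompact surface groups are quasi-isometric embeddings. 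That citation is the black box replacing the crossing argument you attempt to reconstruct by hand; without it (or a complete proof along the lines you sketch, which would essentially reprove a case of it) your argument is incomplete exactly at the step you identify as delicate.
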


We now give a robust criterion guaranteeing that the current $\mu_\rho$ is atomic. 
We say that a geodesic current is a \emph{multicurve}
 if it is a finite  sum of $\Gamma$-orbits of Dirac masses on (lifts of) closed geodesics and geodesics with endpoints in cusps.
 
\begin{thm}\label{thm_intro:multicurves}
Let $\rho\colon\Gamma\to\Sp(2n,\bF)$ be maximal framed and let $\bQ(\rho)<\bF$ be the field generated over $\bQ$ 
by matrix coefficients of $\rho$.  If the restriction of $v\colon\bF^\times\to\bR$ to $\bQ(\rho)$ is discrete,
 then, up to rescaling, the associated current $\mu_\rho$ is a multicurve.
\end{thm}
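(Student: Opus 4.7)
The plan is to exploit the general correspondence between positive crossratios and geodesic currents developed in the earlier parts of the paper, combined with the fact that, under the discreteness hypothesis, the underlying crossratio takes values in a discrete subgroup of $\bR$. Concretely, $\mu_\rho$ is constructed as the crossratio current of a positive crossratio
\[
B_\rho(a,b,c,d)=-v\bigl(\chi(\varphi(a),\varphi(b),\varphi(c),\varphi(d))\bigr),
\]
where $\chi$ is the $\bF$-valued symplectic crossratio of Lagrangians, and for any positively ordered quadruple $(a,b,c,d)\in\hg$ the defining identity
\[
\mu_\rho\bigl(\ioo{a}{c}\times\ioo{b}{d}\bigr)=B_\rho(a,b,c,d)
\]
holds. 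Because $\chi$ is a rational function of the Pl\"ucker coordinates of the Lagrangians $\varphi(a),\varphi(b),\varphi(c),\varphi(d)$, and these Lagrangians are $\rho$-equivariantly determined by matrix entries of $\rho$, every value of $\chi$ belongs to $\bQ(\rho)^{\times}$. By hypothesis $v(\bQ(\rho)^{\times})=c\ZZ$ for some $c>0$, hence every rectangle-mass of $\mu_\rho$ with corners in $\hg$ lies in $c\ZZ_{\geq 0}$.

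The next step is to show that this discreteness forces $\mu_\rho$ to be purely atomic. Write $\mu_\rho=\mu_\rho^{\mathrm{a}}+\mu_\rho^{\mathrm{c}}$ for its atomic/continuous decomposition and suppose $\mu_\rho^{\mathrm{c}}\ne 0$. Its support $S\subset\bht$ is a closed perfect set, and since the atomic part has only countably many atoms one can pick $(p,q)\in S$ that is not an atom of $\mu_\rho$. Using density of $\hg$ in $\deH$, choose a sequence of rectangles
\[
R_n=\ioo{a_n}{c_n}\times\ioo{b_n}{d_n}
\]
with corners in $\hg$ shrinking down to $\{(p,q)\}$. By inner/outer regularity, $\mu_\rho^{\mathrm{c}}(R_n)\to 0$ and $\mu_\rho^{\mathrm{a}}(R_n)\to 0$, yet $\mu_\rho^{\mathrm{c}}(R_n)>0$ because $(p,q)\in S$. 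Hence eventually $0<\mu_\rho(R_n)<c$, contradicting $\mu_\rho(R_n)\in c\ZZ$. Therefore $\mu_\rho$ is purely atomic, and after rescaling by $c^{-1}$ every atom has positive integer mass.

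It remains to argue that the rescaled $\mu_\rho$ is a \emph{finite} sum of $\Gamma$-orbits of Dirac masses, and to identify the supports as lifts of closed or cusp-to-cusp geodesics. Since every atom has mass at least one and $\mu_\rho$ is $\Gamma$-invariant and Radon, the push-forward to $\Gamma\backslash\bht$ is a locally finite measure; using the tame geometry of $\Gamma\backslash\bht$ (compact thick part plus finitely many cuspidal ends) together with the fact that an accumulation of atoms toward a non-cuspidal end of $\Gamma\backslash\bht$ would contradict local finiteness, one concludes that only finitely many $\Gamma$-orbits of atoms can appear. For the structural dichotomy, an atom $(a,b)\in\bht$ can be produced only at a point where the Lagrangian framing $\varphi$ undergoes a jump visible to the valuation; by $\Gamma$-equivariance of $\varphi$ such jumps are forced to sit above points with infinite $\Gamma$-stabilizer on $\bht$, i.e.\ either $(a,b)$ is fixed by a hyperbolic element (closed geodesic) or both $a$ and $b$ are parabolic fixed points (cusp-to-cusp geodesic). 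The main obstacle I expect is precisely this last structural step: ruling out atoms at ``generic'' pairs of boundary points uses the precise interaction of the Lagrangian framing with the discretely valued crossratio and is where the compatible-barycenter and Bruhat--Tits building techniques developed earlier in the paper are most needed.
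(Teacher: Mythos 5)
Your overall strategy matches the paper's: show the crossratio values lie in a discrete subgroup of $\bR$, deduce that the current is purely atomic with $\bN$-valued masses, then show it is a finite $\Gamma$-orbit sum of geodesics. But there are two genuine gaps.

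\textbf{Gap 1: The crossratio values need not lie in $\bQ(\rho)^\times$.} You claim that since the Lagrangians $\varphi(a)$ are ``$\rho$-equivariantly determined by matrix entries of $\rho$,'' the values of the symplectic crossratio lie in $\bQ(\rho)^\times$. This does not follow. For $a=\gamma_-\in\hg$ the Lagrangian $\varphi(a)$ is the contracting eigenspace of $\rho(\gamma)$, which is defined only over a splitting field of the characteristic polynomial of $\rho(\gamma)$ --- a field extension $\bL/\bQ(\rho)$ that can be proper. So the Pl\"ucker coordinates, and hence $\det R$, a priori only lie in $\bL$. The paper fixes this (Theorem \ref{thm:7.3}) by bounding $[\bL:\bQ(\rho)]\leq(8n)!$ for a quadruple of hyperbolic elements and invoking the classical bound on the index of value groups in finite extensions, concluding that $[\,\cdot\,,\,\cdot\,,\,\cdot\,,\,\cdot\,]_\rho$ takes values in $\frac{1}{(8n)!}\Lambda$. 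Only after rescaling by this extra denominator does one get integer values. Your argument, as written, silently assumes an integrality statement that is false without that extension bound.

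\textbf{Gap 2: The structure of the atoms is a measure-theoretic, not a framing-theoretic, fact.} Your last paragraph speculates that atoms occur where ``the Lagrangian framing undergoes a jump visible to the valuation'' and that one needs barycenter/Bruhat--Tits techniques to rule out generic atoms. This is not the right mechanism and would be hard to make precise. The correct argument (Proposition \ref{p.atomic} in the paper) is elementary: once $\mu_\rho$ is purely atomic with each atom of mass $\geq 1$, $\Gamma$-invariance plus the Radon property forces the $\Gamma$-orbit of any atom $(a,b)$ to be discrete in $(\deH)^{(2)}$ --- otherwise a compact set would have infinite mass. A geodesic with discrete $\Gamma$-orbit projects to a closed subset of $\Sigma$, hence is either a closed geodesic or joins two cusps. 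Finiteness then follows by choosing a compact set in $\Sigma$ meeting every such geodesic: only finitely many orbits of atoms can hit its preimage. No information about the framing $\varphi$ or the barycenter is used or needed at this stage.

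Your middle step (pure atomicity via the atomic/continuous decomposition and a shrinking rectangle argument) is essentially the paper's argument phrased slightly differently and is fine, modulo the caveat that one must check $\mu_\rho(R)\in c\ZZ$ for open and closed rectangles; this does hold because the defining sup/inf in Proposition \ref{prop:Rm} over a discrete value set, and with $\mu_\rho$ Radon, must be attained.
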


Using Strubel coordinates we construct examples of representations to which Theorem \ref{thm_intro:multicurves} applies  (see \S\ref{ex:strubel}).
Moreover we  prove in \cite{BIPP-RSPECmax} that a maximal framed representation
$\rho\colon\Gamma\to\Sp(2n,\bF)$ is always conjugate to a representation
$\rho'\colon\Gamma\to\Sp(2n,\bF_1)$ for a finite extension $\bF_1$ of the field
$\bQ(\tr(\rho))$ generated by the traces of the representation
$\rho$. As a result, Theorem \ref{thm_intro:multicurves} applies as
soon as the field $\bQ(\tr(\rho))$ generated by the traces of the
representation $\rho$ has discrete valuation; using this we  show in
\cite{BIPP-RSPECmax} that multicurves are dense in both the real
spectrum and Weyl chamber length compactifications of  character varieties of maximal representations.

\medskip


We turn now to the case in which  the current $\mu_\rho$ in Theorem~\ref{thm_intro:thm1} is a measured lamination. 
In this case we denote by $\calT(\mu_\rho)$ the associated $\bR$-tree, and by  $\calV(\mu_\rho)$   its vertex set (see \S~\ref{s.treemu} for the definition).
We have then
\begin{thm}
\label{thm_intro:thm5}  
Let $\bF$ be  non-Archimedean real closed and let
 $\rho\colon\Gamma\to\Sp(2n,\bF)$ 
be a maximal framed representation. 
If the associated current $\mu_\rho$ 
is a measured lamination,
then there is a $\Gamma$-equivariant isometric embedding
\bqn
\calV(\mu_\rho)\hookrightarrow{\calB_n^\bF}.
\eqn
\end{thm}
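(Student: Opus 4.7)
The plan is to construct the embedding $\Phi\colon\calV(\mu_\rho)\hookrightarrow\calB_n^\bF$ explicitly by assigning to each vertex of the tree a barycenter of the framing on the ideal vertices of the corresponding complementary polygon of the lamination, and then to verify the isometry property by reducing the Finsler distance computation to the intersection formula \eqref{eq:intersection=length} via the crossratio and barycenter machinery developed earlier in the paper. The key point is that vertices of the tree dual to a measured lamination correspond to complementary ideal polygons, at whose ideal vertices the framing $\varphi$ produces maximal tuples of Lagrangians that canonically admit a barycenter in $\calB_n^\bF$.

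First, one lifts $\mu_\rho$ to a $\Gamma$-invariant measured geodesic lamination on $\H$. The vertex set $\calV(\mu_\rho)$ of the dual tree $\calT(\mu_\rho)$ parametrises the complementary ideal polygons $\{P_v\}$, whose ideal vertices in $\deH$ lie in the closure of $\hg$. By continuity of the framing $\varphi$, a consequence of the positivity of the crossratio associated with $\rho$ established earlier in the paper, $\varphi$ extends to each such ideal vertex, yielding a cyclically ordered tuple of Lagrangians $(L^v_1,\dots,L^v_{k(v)})$ with the property that every cyclically ordered triple is a maximal Lagrangian triple. Applying the general barycenter construction for maximal tuples that underlies the ``compatible barycenters'' framework of the paper, we define
\bqn
\Phi(v):=b\bigl(L^v_1,\dots,L^v_{k(v)}\bigr)\in\calB_n^\bF.
\eqn
The $\Gamma$-equivariance of $\Phi$ follows from the $\rho$-equivariance of $\varphi$ combined with the $\PSp(2n,\bF)$-equivariance of the barycenter map.

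For the isometry, we must show that $d_{\calB_n^\bF}(\Phi(v),\Phi(w))=d_{\calT(\mu_\rho)}(v,w)$ for every pair $v,w\in\calV(\mu_\rho)$. The right-hand side equals the total transverse $\mu_\rho$-measure of the leaves of $\widetilde\mu_\rho$ separating $P_v$ and $P_w$. The plan is to exhibit a Finsler-geodesic path in $\calB_n^\bF$ from $\Phi(v)$ to $\Phi(w)$ whose Finsler length admits a telescoping decomposition along the geodesic from $v$ to $w$ in $\calT(\mu_\rho)$, each atomic contribution being the valuation of a crossratio of four Lagrangians: the framing of a separating leaf together with a pair of ideal vertices in the two adjacent complementary polygons. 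Summing these contributions and matching them with the transverse measure via the leaf-wise version of the intersection formula of Theorem~\ref{thm_intro:thm1} then yields the desired equality.

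The main obstacle will be the last step: establishing that the Finsler distance between two barycenters attached to adjacent complementary polygons is given exactly by the valuation of the crossratio of the four relevant boundary Lagrangians. This is precisely the compatibility property between the barycenter construction and the Finsler structure on $\calB_n^\bF$, and it is the place where the abstract theory of positive crossratios with compatible barycenters must be invoked in its sharpest form. Once this local compatibility is in place, an approximation argument along the geodesic in $\calT(\mu_\rho)$ (using that the transverse measure is inner-regular and that contributions from leaves are additive under the valuation) upgrades the local identity to the global isometric embedding.
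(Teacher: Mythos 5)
Your proposal shares the paper's overall strategy (attach a barycenter in $\calB_n^\bF$ to each complementary region of the lamination and then match distances via the crossratio), but it diverges at the crucial technical step in a way that creates a genuine gap.

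You define $\Phi(v)$ as the barycenter of the framing evaluated at the ideal vertices of the complementary polygon $P_v$, justified by ``continuity of the framing $\varphi$, a consequence of the positivity of the crossratio.'' This claim is not supported, and in the paper's setting it is in fact false: the framing $\varphi\colon\hg\to\calL(\bF^{2n})$ is only defined on the countable dense set $\hg$, and the paper explicitly warns (e.g.\ in the introduction, and in the remark following Proposition~\ref{prop: finite add mesure on Rec}) that the crossratio need not be continuous and that the framing cannot in general be extended --- for instance, when $\bF$ is countable, so is $\calL(\bF^{2n})$, hence $\varphi$ has no continuous extension to the uncountable set of ideal vertices of complementary polygons. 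The whole technical machinery of Sections~\ref{sec:cr->gc}--\ref{s.tree} is built precisely to sidestep this. The paper's proof instead defines $\beta(\bRr)$ as $\beta(a,b,c)=B(\varphi(a),\varphi(b),\varphi(c))$ for \emph{any} three points $a,b,c\in\hg$ lying in distinct connected components of $\deH\smallsetminus\bRr(\infty)$, and proves in Proposition~\ref{prop: barycenter map factorizes} --- using only the additivity (CR3), the compatibility $[a,b,c,d]=d_\calX(\beta(a,b,d),\beta(a,c,d))$, and the vanishing of the crossratio across complementary regions (Lemma~\ref{lem:5.3}, itself a consequence of the lamination-type hypothesis) --- that this value is independent of the choices. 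No limit of framings at polygon vertices is ever taken.

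Your isometry argument also has a structural problem. You propose a telescoping decomposition along a sequence of ``adjacent'' complementary polygons between $v$ and $w$, but for a general measured lamination the complementary regions separating two given ones need not form a discrete chain: leaves can accumulate and there can be uncountably many regions in between, so there is no natural adjacency structure along which to telescope. The paper's Lemma~\ref{lem:5.6} avoids this entirely: it picks $a,b\in\hg$ separated by $\bRr_1$ from its boundary leaf towards $\bRr_2$, and $c,d\in\hg$ symmetrically for $\bRr_2$, and computes $d_\calX(\beta(\bRr_1),\beta(\bRr_2))=[a,b,c,d]=\mu(\icc{d}{a}\times\icc{b}{c})$ in one stroke, using only Proposition~\ref{prop:Rm}(1) and the $\mu$-shortness of the bounding geodesics. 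This equals $d_\mu(\bRr_1,\bRr_2)$ because both measure the transverse mass of leaves separating the two regions. Your approach would need to be substantially rebuilt on this basis to close the gap.
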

We will see that if $\bQ(\rho)$ has discrete valuation, then $\calV(\mu_\rho)$ is the vertex set of a simplicial tree (see \S~\ref{thm:7.3} for a general statement).

\subsection*{Currents associated to positive crossratios}
The proof of Theorem~\ref{thm_intro:thm1}  relies on an abstract framework
that is applicable to more general situations and that we shortly describe here.
Let $X\subset\deH$ be a $\Gamma$-invariant non-empty subset, such as, for example the set $\hg$ of fixed points
of hyperbolic elements in $\Gamma$ 
and let  $X^{[4]}$ denote the set of positively ordered quadruples in $X$. A {\em positive crossratio} is a $\Gamma$-invariant function 
\bqn
[\,\cdot\,,\,\cdot\,,\,\cdot\,,\,\cdot\,]\colon X^{[4]}\longrightarrow[0,\infty),
\eqn
 that
 is flip-invariant
\bqn
[x_1,x_2,x_3,x_4]=[x_3,x_4,x_1,x_2]
\eqn
and satisfies the property
\bqn
[x_1,x_2,x_4,x_5]=[x_1,x_2,x_3,x_5]+[x_1,x_3,x_4,x_5]\,
\eqn
 whenever defined. Our definition is considerably more general than others existing in the literature (see Remark~\ref{rem:cr} for a  comparison). 
 First we only require our crossratio to be defined  on a dense subset $X$ of the boundary of the hyperbolic plane: 
 this is important for some of the applications. For example, if the field $\bF$ is countable, also $\calL(\bF^n)$ is countable, 
 and thus a maximal framing associated to a representation $\rho\colon\Gamma\to\Sp(2n,\bF)$, 
 as well as the induced crossratio, can only be defined on a countable set. Furthermore some representations, 
 as for example those defined via Fock--Goncharov or shear coordinates,
 only have a framing defined on a countable set not including the fixed points of hyperbolic elements. 
Second we do not require any continuity on our crossratio.
 In many interesting examples, the crossratios arising from representations over non-Archimedean real closed fields are integer valued, 
 and thus cannot be continuous. 
 Dropping the continuity assumption on the crossratio also allows us to  us encompass the theory of crossratios arising from actions on trees (see Example \ref{ex:trees}). 

If $\gamma\in\Gamma$ is hyperbolic and $\{\gamma_-,\gamma_+\}\subset X$,
the {\em period\footnote{See \S~\ref{ssec:periods} for a more general definition of the period without the restriction that $\{\gamma_-,\gamma_+\}\subset X$.} $\per(\gamma)$ of $\gamma$ with respect to $[\,\cdot\,,\,\cdot\,,\,\cdot\,,\,\cdot\,]$} 
is defined by
\bqn
\per(\gamma)\colon=[\gamma_-,x,\gamma x,\gamma_+]\,,
\eqn
where $x\in X$ is any point such that $(\gamma_-,x,\gamma x,\gamma_+)\in X^{[4]}$.
We show:

\begin{thm}\label{thm_intro:thm6}  Let 
$ X\subset\deH$ be a $\Gamma$-invariant non-empty subset 
and $[\,\cdot\,,\,\cdot\,,\,\cdot\,,\,\cdot\,]$ 
a positive crossratio on $ X$.  
Then there is a geodesic current $\mu$ on $\Sigma$ such that for all hyperbolic $\g\in\Gamma$
\bqn
\per(\gamma)=i(\mu,\delta_c).
\eqn
The geodesic current $\mu$ depends continuously on the crossratio $[\,\cdot\,,\,\cdot\,,\,\cdot\,,\,\cdot\,]$.
\end{thm}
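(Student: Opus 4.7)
My plan is to follow the classical Bonahon paradigm: build a $\Gamma$-invariant Radon measure $\wt\mu$ on $(\partial\H)^{(2)}$ by prescribing its values on a basis of ``rectangles'' with vertices in $X$, then extend by Caratheodory. For any $(a,b,c,d)\in X^{[4]}$, let $R(a,b,c,d)\subset(\partial\H)^{(2)}$ denote the open set of unordered pairs $\{x,y\}$ whose geodesic separates $\{a,b\}$ from $\{c,d\}$, and set $\nu(R(a,b,c,d)):=[a,b,c,d]$. Flip-symmetry makes $\nu$ well defined on these rectangles, $\Gamma$-invariance of the crossratio makes $\nu$ a $\Gamma$-invariant set function, and density of $X$ in $\partial\H$ ensures that the rectangles with vertices in $X$ form a basis for the topology of $(\partial\H)^{(2)}$. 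Iterating the cocycle identity together with the flip then yields finite additivity of $\nu$ whenever an $X$-rectangle is partitioned into finitely many $X$-sub-rectangles by inserting cut-points from $X$ in either factor.

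\textbf{Extension to a Radon measure.} I extend $\nu$ to a $\Gamma$-invariant Radon measure $\wt\mu$ via the outer measure associated to countable covers by $X$-rectangles. The nontrivial point is countable sub-additivity on the basis: given $R\subseteq\bigcup_n R_n$ and $\varepsilon>0$, using density of $X$ together with the monotonicity that follows from positivity and finite additivity, I choose an $X$-rectangle $R'$ with compact closure $\overline{R'}\subset R$ and $\nu(R')>\nu(R)-\varepsilon$, and enlargements $R_n^+\supseteq R_n$ with $\nu(R_n^+)<\nu(R_n)+\varepsilon/2^n$; compactness of $\overline{R'}$ reduces the cover to a finite subcover, and finite additivity closes the estimate. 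The resulting $\wt\mu$ is $\Gamma$-invariant and descends to a geodesic current $\mu$ on $\Sigma$.

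\textbf{Period formula, continuity, and main obstacle.} For hyperbolic $\gamma\in\Gamma$ with $\{\gamma_-,\gamma_+\}\subset X$, Bonahon's intersection formula gives $i(\mu,\delta_c)=\wt\mu(D)$ for any fundamental domain $D$ of the $\langle\gamma\rangle$-action on the set of geodesics crossing the axis $(\gamma_-,\gamma_+)$; by density and $\Gamma$-invariance of $X$, one can pick $x\in X$ with $(\gamma_-,x,\gamma x,\gamma_+)\in X^{[4]}$ and take $D=R(\gamma_-,x,\gamma x,\gamma_+)$, so that $\wt\mu(D)=[\gamma_-,x,\gamma x,\gamma_+]=\per(\gamma)$. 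The case $\{\gamma_-,\gamma_+\}\not\subset X$ is handled by invoking the general definition of $\per(\gamma)$ from \S\ref{ssec:periods} and approximating $\gamma_\pm$ by points of $X$, using monotone continuity of $\wt\mu$ along nested rectangles. Continuity of $\mu$ in the crossratio is then a standard Riesz-type argument: integrals of compactly supported continuous functions against $\mu$ are uniformly approximated by linear combinations of crossratio values at the vertices of a finite $X$-rectangle partition, and these depend continuously on the crossratio. The principal obstacle is the countable sub-additivity step, since one cannot rely on any continuity of the crossratio nor on $X=\partial\H$, and must lean entirely on cocycle-induced finite additivity, positivity, density of $X$, and compactness of rectangle closures in $(\partial\H)^{(2)}$.
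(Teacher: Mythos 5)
Your overall strategy (build a $\Gamma$-invariant set function on rectangles with vertices in $X$, extend to a Radon measure, check the period formula by Bonahon's fundamental-domain description) is the same as the paper's, but your extension step and your period argument both contain genuine gaps that stem from the same source: without continuity of the crossratio, the value $[a,b,c,d]$ need not be the measure of \emph{any} single Borel rectangle.

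\textbf{The extension step.} You define $\nu(R(a,b,c,d)):=[a,b,c,d]$ on \emph{open} rectangles and try to prove countable sub-additivity by choosing, for $\varepsilon>0$, an $X$-rectangle $R'$ with $\overline{R'}\subset R$ and $\nu(R')>\nu(R)-\varepsilon$. This inner approximation can fail: take $[a,b,c,d]:=\delta_e(\ico{d}{a}\times\ico{b}{c})$ for a closed curve $e$ represented by a hyperbolic $\gamma$, and take $R$ the open rectangle with vertices $(a,\gamma_+,c,\gamma_-)$ chosen so that the only orbit point of $(\gamma_-,\gamma_+)$ in $\icc{\gamma_-}{a}\times\icc{\gamma_+}{c}$ sits at the corner $(\gamma_-,\gamma_+)$. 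Then $\nu(R)=1$, but every $R'$ with $\overline{R'}\subset R$ has $\nu(R')=0$. Your outer enlargement $\nu(R_n^+)<\nu(R_n)+\varepsilon/2^n$ fails for the same reason. Consequently the Carath\'eodory construction either does not reproduce $\nu$ on the basis or does not produce the geodesic current you need; indeed the "correct" current here is $\delta_e$, whose measure of the open rectangle $R$ is $0$, not $1$. The paper's Proposition~\ref{prop:Rm} avoids this by never asserting $\mu(\ovcirc R)=\cR{R}$; it builds $\mu$ through a Riesz-representation argument (sandwiching characteristic functions of rectangles between continuous functions and using uniform continuity plus density of $X$) and then establishes only the two-sided estimate $\mu(\ovcirc R)\leq\cR{R}\leq\mu(\ov R)$. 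That sandwich is the correct statement; equality on open rectangles is not available.

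\textbf{The period formula.} Even granting the existence of $\mu$, your argument that $i(\mu,\delta_c)=\wt\mu(D)=[\gamma_-,x,\gamma x,\gamma_+]$ with $D=R(\gamma_-,x,\gamma x,\gamma_+)$ is too quick on two counts. First, the Borel fundamental domain for $\langle\gamma\rangle$ is the half-open rectangle $\ioo{\gamma_+}{\gamma_-}\times\ioc{x}{\gamma x}$, not the open one, and the slab $\ioo{\gamma_+}{\gamma_-}\times\{\gamma x\}$ may carry positive mass. Second, by the point above, $\mu$ of the open rectangle is only bounded above by the crossratio. The paper's Proposition~\ref{prop:new4.10} resolves both issues: it derives the two-sided estimate $|i(\mu,\delta_c)-\per(\gamma)|\leq\mu(\ioo{\gamma_+}{\gamma_-}\times\{x\})$ for \emph{every} $x\in\ioo{\gamma_-}{\gamma_+}\cap X$, and then uses $\sigma$-additivity to find a sequence $x_n\in X$ with $\mu(\ioo{\gamma_+}{\gamma_-}\times\{x_n\})\to 0$. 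Your sketch does not contain this step, and it is precisely the step needed because of the possible atoms of $\mu$. So while you have correctly identified sub-additivity as the obstacle, your proposed fix presupposes the inner/outer regularity that discontinuous crossratios deny you, and your period computation needs the atom-killing argument to close.
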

The theorem has been previously shown by Martone--Zhang under the hypothesis that $X=\deH$ and the crossratio is continuous \cite{Martone_Zhang}.
For the last statement we consider the space $\calCR^+( X)$ of positive crossratios 
as a closed convex cone in the topological vector space of crossratios on $ X$ with the topology of pointwise convergence.  This last property will be used in the proof of the continuity of the map which to a point in the real spectrum compactification of maximal representations associates a geodesic current \cite{BIPP-RSPEC}; see also \cite[Theorem 36]{BIPP-ann}.

The proof of Theorem \ref{thm_intro:thm6} bypasses the possible discontinuities of the crossratio $[\,\cdot\,,\,\cdot\,,\,\cdot\,,\,\cdot\,]$ 
by forcing inner and outer regularity of the current $\mu$ and using its $\sigma$-additivity. As an application of the explicit construction we obtain:
\begin{cor}\label{cor_intro:atomic}
If the crossratio $[\,\cdot\,,\,\cdot\,,\,\cdot\,,\,\cdot\,]$  is integral valued, then the current $\mu$ is a multicurve.
\end{cor}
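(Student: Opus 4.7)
The plan uses the explicit description of $\mu$ furnished in the proof of Theorem~\ref{thm_intro:thm6}: on every basic rectangle $R(a,b,c,d) := I_{(a,b)} \times I_{(c,d)} \subset (\partial\mathcal{H}^2)^{(2)}$ associated to a positively ordered quadruple $(a,b,c,d) \in X^{[4]}$, the $\mu$-mass equals the crossratio $[a,b,c,d]$. Under the integrality hypothesis these values all lie in $\mathbb{Z}_{\geq 0}$. Moreover, since $\Gamma$ acts minimally on $\partial\mathcal{H}^2$ the non-empty $\Gamma$-invariant set $X$ is dense, so the basic rectangles form a topological basis of $(\partial\mathcal{H}^2)^{(2)}$.

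First, I would show that $\mu$ is purely atomic. If $y \in (\partial\mathcal{H}^2)^{(2)}$ is not an atom, outer regularity of the Radon measure $\mu$ together with the basis property allows one to choose a decreasing sequence $R_n \downarrow \{y\}$ of basic rectangles with $\mu(R_n) \to 0$. Integrality then forces $\mu(R_n) = 0$ for $n$ large, so $y$ admits an open $\mu$-null neighborhood. Consequently $\mathrm{supp}(\mu)$ coincides with the set of atoms of $\mu$, which is countable, and $\mu$ has no continuous component.

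Next, I would identify the atoms: each carries positive integer mass, and by $\Gamma$-equivariance its entire orbit consists of atoms of equal mass, so the Radon property forces every such orbit to be locally finite in $(\partial\mathcal{H}^2)^{(2)}$. Using the classification of stabilizers in the torsion-free lattice $\Gamma < \PSL(2,\mathbb{R})$---each nontrivial stabilizer being infinite cyclic hyperbolic or infinite cyclic parabolic---one verifies that a pair $(u,v)$ has locally finite $\Gamma$-orbit precisely when either $(u,v)$ is the ordered pair of fixed points of a common hyperbolic element of $\Gamma$ (yielding a closed geodesic of $\Sigma$), or both $u$ and $v$ are cusps (yielding a cuspidal geodesic); in any other configuration the iterated action of a hyperbolic element in the stabilizer of one coordinate produces a non-diagonal accumulation point of the orbit, or else ergodicity of the $\Gamma$-action on $(\partial\mathcal{H}^2)^{(2)}$ makes the orbit dense.

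The main obstacle I anticipate is controlling the \emph{finiteness} (as opposed to mere local finiteness) of the number of $\Gamma$-orbits of atoms. Local finiteness is immediate, since any basic rectangle $R$ contains at most $\mu(R) < \infty$ atoms with multiplicity. When $\Sigma$ is compact, the quotient $\Gamma \backslash (\partial\mathcal{H}^2)^{(2)}$ is compact, $\mu$ descends to a finite Radon measure, and the integer lower bound $\geq 1$ on each atomic mass yields finitely many orbits. In the finite-area non-compact case one must additionally argue that cuspidal-geodesic orbits cannot accumulate, which follows from the Farey-type discreteness of each cusp-pair $\Gamma$-orbit in $(\partial\mathcal{H}^2)^{(2)}$ away from the diagonal, together with the integer bound on masses of rectangles straddling each cusp neighborhood. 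Combined, these ingredients realize $\mu$ as a finite sum of $\Gamma$-orbits of Dirac masses on (lifts of) closed or cuspidal geodesics, that is, as a multicurve.
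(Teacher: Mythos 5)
Your proposal follows the same three-step outline as the paper's proof (Proposition~\ref{p.atomic}): integrality of $\mu$ on rectangles forces atomicity, atoms correspond to periodic or cuspidal geodesics, and finiteness of the number of orbits. Step~1 is essentially correct, but note one inaccuracy: for a generic rectangle $R$ with vertices $(a,b,c,d)\in X^{[4]}$ the paper does \emph{not} give $\mu(R)=[a,b,c,d]$; it only gives the sandwich $\mu(\ovcirc R)\leq[a,b,c,d]\leq\mu(\ov R)$. What saves the argument is Proposition~\ref{prop:Rm}(3)--(4): for proper \emph{open} and \emph{closed} rectangles, $\mu(R)$ is a supremum (resp.\ infimum) of crossratio values, hence an integer. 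That is exactly what the paper invokes. Your Step~2 reaches the right conclusion, but the appeal to ergodicity (``ergodicity \dots makes the orbit dense'') is not quite a valid step --- ergodicity controls full-measure sets, not individual orbits --- and the case analysis on stabilizers is more labor than needed. The paper's argument is cleaner: local finiteness of atoms (Radon) means the orbit of $(a,b)$ is discrete in $(\deH)^{(2)}$, hence the projected geodesic $\pr(g)$ is a closed subset of $\Sigma$, hence is periodic or joins two cusps.

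The genuine gap is in Step~3. In the non-compact case, the Farey-type discreteness of a \emph{single} cusp-pair orbit bounds the number of lifts in a compact set, but it does not by itself bound the number of \emph{distinct $\Gamma$-orbits} of atoms, which is what you need; your argument conflates these two kinds of finiteness. The missing ingredient, which the paper supplies, is that there is a single compact $K\subset\Sigma$ (the thick part) met by every closed geodesic and every geodesic connecting two cusps. Lifting, one obtains a compact $C\subset(\deH)^{(2)}$ such that $\Gamma\cdot a\cap C\neq\varnothing$ for every atom $a$; then $\mu(C)<\infty$ together with the integer lower bound $\geq1$ on each atom yields finitely many orbits, uniformly in both the compact and finite-area cases. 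Your compact-case shortcut ($\Gamma\backslash(\deH)^{(2)}$ is compact, so $\mu$ descends to a finite Radon measure) is also shaky: that quotient is not Hausdorff, so ``descends to a finite Radon measure'' needs justification; the compact-$C$ argument handles this case too and should replace it.
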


To deduce Theorems~\ref{thm_intro:thm1} and \ref{thm_intro:multicurves}, from Theorem~\ref{thm_intro:thm6} and Corollary \ref{cor_intro:atomic},
we use the maximal framing to construct a positive crossratio $[\,\cdot\,,\,\cdot\,,\,\cdot\,,\,\cdot\,]_\rho$
on $\hg$  whose periods satisfy the equality $\per(\gamma)=L(\rho(\gamma))$.  Then Theorem~\ref{thm_intro:thm6} provides 
a geodesic current with the required properties.

Maximal representations are not the only class of representations whose length function is given by the periods of a positive crossratio: 
this is the case for all \emph{positively ratioed representations} \cite{Martone_Zhang} -- a class that also includes Hitchin representations \cite{Labourie} --, 
representations satisfying property $H_k$ \cite{BeyP1} and $\Theta$-positive representations \cite{BeyP2}. 
Corollary \ref{cor_intro:atomic} can be used to study asymptotic properties of these representations as well.

Our approach using $\sigma$-additivity of geodesic currents has interesting applications even for representations in $\PSp(2n,\bR)$, 
for which we cannot always assume that the crossratio is continuous.
The simplest instance is for $\PSL(2,\RR)$ if $\rho$ sends an element representing a cusp of $\Gamma$ to a hyperbolic element.

\begin{cor}\label{cor:cor2}  Let $\rho\colon\Gamma\to\PSp(2n,\bR)$ be a maximal representation
and let $K\subset\Sigma=\Gamma\backslash\H$ be a compact subset.
Then there are constants $0<c_1\leq c_2$ such that for every $\gamma\in\Gamma$ representing a closed geodesic $c$ 
contained in $K$
\bqn
c_1\ell(c)\leq L(\rho(\gamma))\leq c_2\ell(c)\,.
\eqn
In particular this holds uniformly for all $\gamma$ representing simple closed geodesics.
\end{cor}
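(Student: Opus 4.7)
The plan is to apply Theorem~\ref{thm_intro:thm1} to obtain a geodesic current $\mu_\rho$ on $\Sigma$ with $i(\mu_\rho,\delta_c)=L(\rho(\gamma))$ for every closed geodesic $c$ represented by $\gamma$, and then to prove the two-sided estimate $c_1\ell(c)\le i(\mu_\rho,\delta_c)\le c_2\ell(c)$ for closed geodesics $c\subset K$, where $\ell(c)=i(\calL,\delta_c)$ is the hyperbolic length computed from the Liouville current $\calL$. Note that $\mu_\rho\neq 0$: the representation $\rho$ is faithful and sends hyperbolic $\gamma\in\Gamma$ to isometries of the Siegel space with strictly positive translation length.

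For the upper bound I would use only the Radon property of $\mu_\rho$. The set $W_K\subset(\deH)^{(2)}/\Gamma$ of geodesics of $\Sigma$ meeting $K$ is relatively compact, hence $M_K:=\mu_\rho(W_K)<\infty$. Given $c\subset K$, lift to $\H$ and partition a fundamental arc $\widetilde c_0\subset\widetilde c$ of length $\ell(c)$ into at most $\lceil\ell(c)\rceil$ sub-arcs of length $\leq 1$, each contained in a $\Gamma$-translate of a fixed compact lift of $K$. The $\Gamma$-invariance of $\mu_\rho$ implies that each sub-arc contributes at most $M_K$ to the $\mu_\rho$-measure of the geodesics crossing $\widetilde c_0$; summing and using that the systole of $\Sigma$ is positive yields $i(\mu_\rho,\delta_c)\le c_2\ell(c)$.

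For the lower bound I would argue by compactness in the space of geodesic currents. The renormalized family $\{\delta_c/\ell(c):c\subset K\}$ is precompact, with limits satisfying $i(\calL,\nu)=1$; limit points not themselves of the form $\delta_{c'}/\ell(c')$ are non-zero measured laminations supported in $K$. Continuity of $\nu\mapsto i(\mu_\rho,\nu)$ on this compact closure reduces the desired bound to strict positivity of this function at every point of the closure. At the ``rational'' points $\delta_c/\ell(c)$ this is just $L(\rho(\gamma))/\ell(c)>0$; for a limit measured lamination $\lambda$, the positivity $i(\mu_\rho,\lambda)>0$ would follow from the strict positivity of the positive crossratio attached to the maximal framing of $\rho$ on every positively oriented $4$-tuple, which combined with the construction of $\mu_\rho$ from the crossratio in Theorem~\ref{thm_intro:thm6} implies that $\mu_\rho$ assigns positive mass to every non-empty open subset of $(\deH)^{(2)}/\Gamma$ and therefore intersects non-trivially with any non-zero lamination.

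The main obstacle will be establishing this full-support / strict positivity property of $\mu_\rho$ for real maximal representations; the remaining ingredients are routine Radon measure theory together with a standard precompactness lemma for normalized closed-geodesic currents. The ``in particular'' statement for simple closed geodesics is then immediate, since on a finite area hyperbolic surface all simple closed geodesics lie in a fixed compact subset (the thick part) of $\Sigma$, to which the main estimate applies with uniform constants.
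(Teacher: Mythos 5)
Your proof is correct in outline, but it follows a genuinely different route from the paper's. The paper first establishes that $\Syst(\mu_\rho)>0$ via the Collar Lemma \cite[Theorem~1.9]{BP}: intersecting hyperbolic elements satisfy $(e^{L(\rho(\gamma))/n}-1)(e^{L(\rho(\eta))/n}-1)\geq 1$, which forces self-intersecting geodesics to satisfy $L(\rho(\gamma))\geq n\ln 2$ and shows there are only finitely many conjugacy classes with $L(\rho(\gamma))\leq n\ln 2$. Positive systole then yields the corollary by \cite[Corollary~1.5\,(2)]{BIPP1}. Your alternative route replaces the systole argument with full support of $\mu_\rho$, which indeed holds: for $\bF=\bR$ and $v=-\log|\cdot|$, Proposition~\ref{lem:crBP}(4) gives $\det R>1$ on maximal quadruples, so the crossratio is strictly positive on all of $(\deH)^{[4]}$, and by Proposition~\ref{prop:Rm}(3) every proper open rectangle has positive $\mu_\rho$-mass. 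The trade-off is that the paper's Collar Lemma argument gives the stronger conclusion $\Syst(\mu_\rho)>0$ (not just a bound on a compact), whereas your argument is more self-contained and avoids invoking \cite{BIPP1}.

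Two small inaccuracies worth fixing. First, the limit points of $\{\delta_c/\ell(c):c\subset K\}$ are \emph{not} in general measured laminations: a subsequential limit may be any geodesic current with carrier in $K$ and $i(\calL,\nu)=1$ (for instance, rational multicurves with self-intersections, or currents of positive self-intersection). Your dichotomy ``rational point / limit measured lamination'' therefore does not exhaust the closure. Fortunately the argument only needs that each limit $\nu$ is a non-zero current: pick $g\in\supp(\nu)$ and a transverse $g'$, shrink a product neighborhood $U\times V$ of $(g',g)$ in $\calG$ to be disjoint from its $\Gamma$-translates (the action on $\calG$ is proper), and conclude $i(\mu_\rho,\nu)\geq\mu_\rho(U)\,\nu(V)>0$ using full support; this works for arbitrary non-zero $\nu$, not only laminations. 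Second, writing $W_K\subset(\deH)^{(2)}/\Gamma$ and $\mu_\rho(W_K)$ is imprecise since the $\Gamma$-action on $(\deH)^{(2)}$ is not proper and the quotient is not Hausdorff; the correct formulation works with a compact lift $\widetilde K\subset\H$ and the relatively compact set of geodesics meeting $\widetilde K$, then bounds $i(\mu_\rho,\delta_c)$ by splitting a fundamental arc of $\widetilde c$ into $\lceil\ell(c)\rceil$ unit pieces and using $\Gamma$-invariance. Both issues are cosmetic rather than fatal, and the ``in particular'' step for simple closed geodesics is handled exactly as the paper does, via the thick part.
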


This corollary is well known for Anosov representations.  
However if $\Sigma$ is not compact, a maximal representation is not necessarily Anosov 
since the images of parabolic elements can be unipotent (see for instance \S\ref{ex:strubel}).

\subsection*{Actions with compatible barycenters}
The proof of Theorem \ref{thm_intro:thm5}  is carried out in the framework of actions with compatible barycenters that we now define. 
Given an isometric $\Gamma$-action on a metric space $(\calX,d)$, we say that a map
\bqn
\beta\colon X^{(3)}\longrightarrow \calX
\eqn
from the set $ X^{(3)}$ of distinct triples in $ X$ to $\calX$ is a {\em barycenter compatible with the crossratio}
$[\,\cdot\,,\,\cdot\,,\,\cdot\,,\,\cdot\,]$ if $\beta$ is $S_3$-invariant, $\Gamma$-equivariant 
and for every $(a,b,c,d)\in X^{[4]}$, we have
\bqn
[a,b,c,d]=d(\beta(a,b,d),\beta(a,c,d))\,.
\eqn
We show then:

\begin{thm}\label{thm_intro:thm7}  Let 
$ X\subset\deH$ be a $\Gamma$-invariant non-empty subset 
and $[\,\cdot\,,\,\cdot\,,\,\cdot\,,\,\cdot\,]$ a positive crossratio on $ X$.
Assume that the geodesic current $\mu$ associated by Theorem~\ref{thm_intro:thm6}
to the positive crossratio $[\,\cdot\,,\,\cdot\,,\,\cdot\,,\,\cdot\,]$ corresponds to a measured lamination.
Then for every isometric $\Gamma$-action on a metric space $\calX$ 
admitting a barycenter compatible with the crossratio $[\,\cdot\,,\,\cdot\,,\,\cdot\,,\,\cdot\,]$, 
there is an isometric $\Gamma$-equivariant map
\bqn
\calV(\mu)\longrightarrow\calX.
\eqn
\end{thm}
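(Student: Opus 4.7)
The plan is to construct the map $\Phi\colon\calV(\mu)\to\calX$ by identifying each element of $\calV(\mu)$ with a plaque (connected component of $\H\setminus\wt\mu$) and assigning to each plaque $P$ the point $\beta(v_1,v_2,v_3)\in\calX$, where $v_1,v_2,v_3$ are any three distinct ideal vertices of $P$ (which lie in $X$ by the construction of $\mu$ from the crossratio in Theorem~\ref{thm_intro:thm6}).

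First I would check that $\Phi$ is well defined and $\Gamma$-equivariant. Since no leaf of $\wt\mu$ can separate points of the same plaque, the crossratio $[v_1,v_2,v_3,v_4]$ vanishes for any positively ordered quadruple of vertices of a common plaque. Plugging this into the compatibility identity $[a,b,c,d]=d(\beta(a,b,d),\beta(a,c,d))$ yields $\beta(v_1,v_2,v_4)=\beta(v_1,v_3,v_4)$, and combining this with $S_3$-invariance and exchanging one vertex at a time shows that $\beta(v_1,v_2,v_3)$ does not depend on the chosen triple, so $\Phi(P)$ is well defined. The $\Gamma$-equivariance of $\Phi$ follows from the $\Gamma$-equivariance of $\beta$ and the $\Gamma$-action on the set of plaques.

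The main technical step is the isometric property $d(\Phi(P_1),\Phi(P_2))=d_{\calV(\mu)}(P_1,P_2)$, which I would establish first for \emph{adjacent} plaques sharing a single leaf and then extend by chaining along the geodesic path between them in $\calT(\mu)$. For adjacent plaques $P_1,P_2$ with shared leaf $\ell$ of endpoints $p,q$, I choose a third vertex $r$ of $P_1$ and $s$ of $P_2$; then $(r,p,s,q)$ is positively ordered on $\partial\H$. Applying the compatibility identity to the four cyclic shifts of $(r,p,s,q)$, and exploiting additivity of the crossratio together with the vanishing of within-plaque crossratios, I aim to derive $d(\Phi(P_1),\Phi(P_2))=\mu(\{\ell\})=d_{\calV(\mu)}(P_1,P_2)$. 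The non-adjacent case then follows by chaining along a geodesic in $\calT(\mu)$, using additivity of the crossratio along a transverse path together with the triangle inequality in $\calX$ to promote the individual adjacency equalities into one.

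The hard part is the adjacent case. The four compatibility identities applied to cyclic shifts of $(r,p,s,q)$ directly exhibit $\Phi(P_1),\Phi(P_2),\beta(r,s,p),\beta(r,s,q)$ as the vertices of a quadrilateral in $\calX$ whose four consecutive sides all equal $\mu(\{\ell\})$, and this alone does not pin down the diagonal distance $d(\Phi(P_1),\Phi(P_2))$. Overcoming this obstacle requires showing that the mixed barycenters $\beta(r,s,p)$ and $\beta(r,s,q)$ coincide with $\Phi(P_1)$ and $\Phi(P_2)$, so that the quadrilateral collapses onto the sought diagonal; I expect to achieve this by inserting auxiliary vertices drawn from nearby plaques and invoking the five-point additivity axiom to produce the extra vanishing crossratios needed to identify these mixed barycenters with the already-defined values of $\Phi$.
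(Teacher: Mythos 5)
Your plan to define $\Phi$ on plaques via $\beta$ applied to three boundary points, and to use vanishing of within-plaque crossratios to prove well-definedness, is the same basic idea as the paper's, but there is a genuine gap in the way you choose the three points: the ideal vertices of a complementary region $P$ of $\supp(\mu)$ need not lie in $X$. You assert they do ``by the construction of $\mu$ from the crossratio,'' but nothing in Theorem~\ref{thm_intro:thm6} puts the endpoints of leaves of the lamination into $X$; when $X=\hg$ is the set of fixed points of hyperbolic elements, for instance, the endpoints of leaves of a minimal filling lamination generically avoid $X$ entirely, and the complementary regions need not even be finite-sided ideal polygons. The barycenter $\beta$ is only defined on $X^{(3)}$, so $\beta(v_1,v_2,v_3)$ is simply not defined for your chosen triple.

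The paper avoids this by never touching the boundary of the plaque: it uses that $X$ is dense (which holds because $X$ is a non-empty $\Gamma$-invariant subset and $\Gamma$ is a lattice) and picks $a,b,c\in X$ lying in \emph{three distinct connected components of} $\deH\smallsetminus\bRr(\infty)$; these components are open arcs and hence meet $X$. Well-definedness is then a short argument (Lemma~\ref{lem:5.3} plus Proposition~\ref{prop: barycenter map factorizes}) using that the crossratio of a quadruple separated by the plaque vanishes. This distinction also kills the rescue you propose for the isometry step: you want to show $\beta(r,s,p)=\Phi(P_2)$ with $p$ an endpoint of the shared leaf, but $p\in P_2(\infty)$ and so it can never be one of the three separated points the barycenter construction needs; the identification you hope for is structurally unavailable. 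Finally, the paper's Lemma~\ref{lem:5.6} proves the isometry for an \emph{arbitrary} pair of complementary regions in one stroke, by picking $a,b$ and $c,d$ in $X$ straddling the two regions so that $d_\calX(\beta(\bRr_1),\beta(\bRr_2))=d_\calX(\beta(a,b,d),\beta(a,c,d))=[a,b,c,d]$, and then matching $[a,b,c,d]$ with $\mu(\icc d a\times\icc b c)$ using $\mu$-shortness of the boundary leaves; no adjacency/chaining decomposition or auxiliary ``quadrilateral collapse'' is needed. You should rework the choice of points to use density of $X$ in the open arcs complementary to $\bRr(\infty)$, at which point both the well-definedness and the isometry fall out along the lines of the paper's Lemmas \ref{lem:5.3}, \ref{prop: barycenter map factorizes}, and \ref{lem:5.6}.
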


We will see that Theorem \ref{thm_intro:thm7} always applies to a framed action of $\Gamma$ on an $\bR$-tree $\calT$ if the crossratio $[\,\cdot\,,\,\cdot\,,\,\cdot\,,\,\cdot\,]$ induced by the action is positive (Proposition \ref{thm: framed trees}). This crossratio is always positive in the case of the action on $\calB_1^\bF$ induced by a maximal framed representation in $\SL(2,\bF)$ (Theorem \ref{thm:SL2 bis}).

When $\rho\colon\Gamma\to\Sp(2n,\bF)$ is a maximal framed representation,  we will use the geometry of the Siegel space to define 
 a barycenter map 
associating to every maximal triple $(\ell_1,\ell_2,\ell_3)$ of Lagrangians a point $B(\ell_1,\ell_2,\ell_3)\in\calB_n^\bF$.
Given a representation $\rho\colon\Gamma\to\Sp(2n,\bF)$ with maximal framing
$\varphi\colon X\to\calL(\bF^{2n})$, we will show that the map
\bqn
\beta(a,b,c)=B(\varphi(a),\varphi(b),\varphi(c))
\eqn
defines a barycenter compatible with the crossratio $[\,\cdot\,,\,\cdot\,,\,\cdot\,,\,\cdot\,]_\rho$
previously defined. Thus Theorem \ref{thm_intro:thm7} applies whenever $\mu_\rho$ corresponds to a measured lamination. Using \cite[Corollary 1.9]{BIPP1} we can find a collection of maximal subsurfaces $\Sigma'\subset\Sigma$ such that Theorem \ref{thm_intro:thm7} holds for the restriction of $\rho$ to $\Sigma'$.

\subsection*{Structure of the paper}

In \S~\ref{sec:3} we discuss preliminaries on geodesic currents and measured laminations. The  new result is Proposition \ref{prop:4pointCriterionForLamCurr} that gives an useful 4-point characterization of measured laminations among geodesic currents that only involves a dense subset of $\partial\H$. In \S~\ref{sec:poscr} we introduce positive crossratios and the associated periods. In \S~\ref{sec:cr->gc} we construct the geodesic current associated to such a crossratio.  Theorem \ref{thm_intro:thm6} follows directly combining Propositions \ref{prop:Rm}, \ref{prop:new4.10} and \ref{prop:new4.11}, which are proven in this section. Corollary \ref{cor_intro:atomic} follows from Proposition \ref{p.atomic}. In \S~\ref{s.tree} we discuss barycenter maps, and prove Theorem  \ref{thm_intro:thm7}. In \S~\ref{sec:max_repr_van_sys} we review the geometry of the Siegel space over real closed fields from \cite{BP}, using this we associate to a maximal framed action on the $\Lambda$-metric space $\calB^\bF_n$ a positive crossratio (Proposition \ref{lem:crBP}), as well as a compatible barycenter map (\S~\ref{subsec:baryc}). In \S~\ref{sec:appl} we prove the results on maximal framed representations: Theorems \ref{thm_intro:thm1}, \ref{thm_intro:multicurves}, \ref{thm_intro:thm5} and Corollaries \ref{cor:cor4} and \ref{cor:cor2}. \S~\ref{sec:example} collects interesting examples of maximal framed representations, illustrating various phenomena.

\section{On geodesic currents and measured laminations}\label{sec:3}

In this section we recall the notions of geodesic currents and their Bonahon intersection (\S\ref{ss.Bon}); then we establish a criterion for the support of a geodesic current to be a geodesic lamination (\S\ref{s1:trees}); we end by recalling the definition of the tree $\calT(\mu)$ associated to a current $\mu$ of lamination type in terms of the straight pseudodistance (see \cite[Section 4]{BIPP1}) on $\H$ associated to a general current.  

Let $\Sigma:=\Gamma\backslash\H$ be a hyperbolic surface of finite area
and denote by $\pr\colon\H\to\Sigma$ the covering map.
The boundary $\deH$ of $\H$ is endowed with the natural cyclic order.
For $(a,b)\in(\deH)^2$ with $a\neq b$,
we will denote the associated \emph{open
  interval} in $\deH$ by
\bqn
\ioo{a}{b}:=\{x\in\deH:\,(a,x,b)\text{ is positively oriented}\}\,,
\eqn
and
the \emph{left half open interval} $\ioc{a}{b}$,  \emph{rigth
  half open interval}
$\ico{a}{b}$ and \emph{closed interval}  $\icc{a}{b}$ accordingly, 
so for example
\bqn
\ico{a}{b}=\{a\}\cup\ioo{a}{b}\,.
\eqn
Given a subset $A\subset\deH$, we will denote:
\bqn
A^{[4]}:=\{(x,y,z,t)\in A^4:\,(x,y,z,t) \text{ is positively oriented}\}\,.
\eqn
%

\subsection{Geodesic currents}\label{ss.Bon}
 A \emph{geodesic current} is a flip-invariant $\Gamma$-invariant positive Radon measure
 on the set of (oriented) geodesics in $\H$, which we identify with
\bqn
(\deH)^{(2)}:=\{(x,y)\in(\deH)^2:\,x\neq y\}\,.
\eqn
Given a (non-oriented) geodesic $c\subset\Sigma$ that is either closed or joining two
cusps,
$\delta_c$ will be the geodesic current given by 
\bqn
\delta_c:=\frac{1}{2}\sum_{(a,b)}\delta_{(a,b)}
\eqn
where we sum on the set of oriented geodesics $(a,b)\in (\deH)^{(2)}$ lifting $c$.
%

%

Two geodesics
$(a,b),(a',b')\in(\deH)^{(2)}$ are
{\em transverse} if they intersect in a point.
The group $\PSL(2,\bR)$, hence $\Gamma$, acts properly on the open subset 
$\calG \subset (\deH)^{(2)}\times(\deH)^{(2)}$
of transverse pairs of  geodesics. 
The   \emph{Bonahon intersection} $i(\mu,\nu)$ of two geodesic currents $\mu$ and $\nu$  is the  
(possibly infinite) $\mu\times\nu$-measure of any Borel fundamental
domain for $\Gamma$ in $\calG$.
Note that, when $\nu$ has compact carrier%
\footnote{ We recall that the \emph{carrier} 
of a geodesic current $\mu$ is the closed subset $\pr(\bigcup_{g\in\supp(\mu)}g)\subset\Sigma$.
The hypothesis that $\nu$ has compact carrier ensures that $i(\mu,\nu)<\infty$ and
is needed in the proofs of the continuity of the Bonahon
intersection.}%
,  $i(\mu,\nu)$  is finite. 
In order to simplify notations, we set 
\bqn
i(\mu,c):=i(\mu,\delta_c)
\eqn
for all closed geodesic $c\subset\Sigma$.
\subsection{Measured laminations and $\mu$-short geodesics}\label{s1:trees}
We refer to \cite[\S 8.3.4]{Martelli} for preliminaries on measured laminations. The equivalence between (1) and (2) in the next proposition is classical; we  establish that the two conditions are also equivalent to (4), an additional
$4$-point characterization that uses only a dense subset of $\deH$.

\begin{prop}
\label{prop:4pointCriterionForLamCurr}
Let $\mu$ be a geodesic current and 
 $X$  a dense subset of $\deH$.
The following are equivalent.

\begin{enumerate}
\item 
\label{it: supp=lamin} 
$\supp(\mu)$ is a lamination ;

\item 
\label{it: 0 autointersection}
$i(\mu,\mu)=0$ ;

\item
\label{it: oc times oc} 
$\mu(\ioc{d}{a}\times\ioc{b}{c}) \cdot \mu(\ioc{a}{b}\times\ioc{c}{d})=0$ 
for all $(a,b,c,d)$ in  $(\deH)^{[4]}$;
\item 
\label{it: oo times oo in dense X} 
 $\mu(\ioo{d}{a}\times \ioo{b}{c}) \cdot \mu(\ioo{a}{b}\times \ioo{c}{d})=0$ 
for all $(a,b,c,d)$ in  $X^{[4]}$.
\end{enumerate}
Such a current will be called {\em of lamination type}.\footnote{While in the introduction we identified with a slight abuse of notation measured laminations with currents with zero self intersection, we prefer to keep the objects distinct for the rest of the paper.}
\end{prop}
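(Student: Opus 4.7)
The plan is to close the cycle (1)$\Leftrightarrow$(2)$\Leftrightarrow$(3)$\Leftrightarrow$(4). The equivalence (1)$\Leftrightarrow$(2) is the classical criterion of Bonahon for a geodesic current to have support equal to a measured lamination, which we would invoke directly.

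For (2)$\Leftrightarrow$(3) we unwind the definition: $i(\mu,\mu)$ equals the $\mu\times\mu$-mass of a Borel fundamental domain for the $\Gamma$-action on the open set $T\subset(\deH)^{(2)}\times(\deH)^{(2)}$ of transverse (i.e.\ linking) pairs of geodesics. The key geometric observation is that a pair $((x,y),(u,v))$ is transverse iff, after possibly swapping the two geodesics, the cyclic order of the four endpoints is a positively oriented quadruple $(a,b,c,d):=(x,u,y,v)$, in which case $(x,y)\in\ioc{d}{a}\times\ioc{b}{c}$ and $(u,v)\in\ioc{a}{b}\times\ioc{c}{d}$. By Fubini, condition (3) is precisely the statement that every ``linking box'' $(\ioc{d}{a}\times\ioc{b}{c})\times(\ioc{a}{b}\times\ioc{c}{d})$, with $(a,b,c,d)\in(\deH)^{[4]}$, has vanishing $\mu\times\mu$-measure. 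Since $T$ is open in the second-countable space $(\deH)^{(2)}\times(\deH)^{(2)}$, it is Lindel\"of, hence covered by countably many such boxes; $\sigma$-additivity then yields $(\mu\times\mu)(T)=0$ and thus $i(\mu,\mu)=0$. Conversely, if (3) fails for some quadruple, the corresponding box has positive $\mu\times\mu$-mass and lies in $T$; $\Gamma$-invariance and countability of $\Gamma$-orbits then force $i(\mu,\mu)>0$.

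The implication (3)$\Rightarrow$(4) is immediate from $\ioo{p}{q}\subset\ioc{p}{q}$. For (4)$\Rightarrow$(3) we argue by contradiction: if both factors in (3) are strictly positive for some $(a,b,c,d)\in(\deH)^{[4]}$, inner regularity of $\mu$ and projection onto the two factors supply, for each of the four half-open intervals $\ioc{d}{a}$, $\ioc{b}{c}$, $\ioc{a}{b}$, $\ioc{c}{d}$, a compact subset whose appropriate pairwise products still carry positive $\mu$-mass. Since a compact subset of the left-open right-closed interval $\ioc{p}{q}$ must be bounded away from the excluded left endpoint $p$, it sits inside a closed subinterval $\icc{p_1}{q}$ with $p_1$ strictly interior; we obtain auxiliary points $a_1\in\ioo{a}{b}$, $b_1\in\ioo{b}{c}$, $c_1\in\ioo{c}{d}$, $d_1\in\ioo{d}{a}$. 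Density of $X$ then lets us select $a'\in\ioo{a}{a_1}\cap X$, $b'\in\ioo{b}{b_1}\cap X$, $c'\in\ioo{c}{c_1}\cap X$, $d'\in\ioo{d}{d_1}\cap X$; the cyclic order forces $(a',b',c',d')\in X^{[4]}$, and the placement of the shifts gives all four inclusions $\icc{p_1}{q}\subset\ioo{p'}{q'}$ simultaneously, so both $\mu(\ioo{d'}{a'}\times\ioo{b'}{c'})>0$ and $\mu(\ioo{a'}{b'}\times\ioo{c'}{d'})>0$, contradicting (4).

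The main obstacle is the coordination in (4)$\Rightarrow$(3): the eight auxiliary points (four interior shifts $p_1$ and four exterior shifts $p'\in X$) must be chosen so that all four inclusions $\icc{p_1}{q}\subset\ioo{p'}{q'}$ hold simultaneously while the new quadruple $(a',b',c',d')$ remains positively oriented. This is mechanical once one carefully tracks the arc in which each point lies, but it requires some attention because the same point $a'$ intervenes both in the rectangle associated with $\ioc{d}{a}$ and in the one associated with $\ioc{a}{b}$. The countable-cover step in (3)$\Rightarrow$(2) is by comparison a routine application of second-countability.
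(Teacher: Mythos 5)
Your proof is correct, but it takes a genuinely different route from the paper. The paper closes a single cycle $(1)\Rightarrow(2)\Rightarrow(3)\Rightarrow(4)\Rightarrow(1)$, and the return step $(4)\Rightarrow(1)$ is a short support argument: if $g=(x,y)$ and $g'=(x',y')$ in $\supp(\mu)$ link, density of $X$ provides $(a,b,c,d)\in X^{[4]}$ with $(x,a,x',b,y,c,y',d)$ positively oriented, so the open boxes $\ioo{d}{a}\times\ioo{b}{c}$ and $\ioo{a}{b}\times\ioo{c}{d}$ are neighborhoods of $g$ and $g'$ and have positive measure since $g,g'\in\supp(\mu)$, contradicting $(4)$ --- no regularity is needed. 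You instead close the three pairwise equivalences, citing Bonahon for $(1)\Leftrightarrow(2)$ (which the paper also flags as classical, although it reproves $(1)\Rightarrow(2)$ inline), and you get $(4)\Rightarrow(3)$ via inner regularity: extract a compact subset of each positive-measure half-open box, shrink each half-open arc to a closed subarc, then push the endpoints slightly outward into $X$. Your coordination worry does check out: since $(a,a',a_1,b,b',b_1,c,c',c_1,d,d',d_1)$ is positively oriented, all four inclusions $\icc{d_1}{a}\subset\ioo{d'}{a'}$, $\icc{a_1}{b}\subset\ioo{a'}{b'}$, $\icc{b_1}{c}\subset\ioo{b'}{c'}$, $\icc{c_1}{d}\subset\ioo{c'}{d'}$ hold simultaneously and $(a',b',c',d')\in X^{[4]}$. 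The one point to tighten is your Lindel\"of step in $(3)\Rightarrow(2)$: the linking boxes in $(3)$ are half-open, hence not open sets, so you should cover the transverse set $T$ by the \emph{open} sub-boxes $(\ioo{d}{a}\times\ioo{b}{c})\times(\ioo{a}{b}\times\ioo{c}{d})$ --- these do cover $T$ (insert a point in each of the four gaps between the endpoints of a linking pair) --- extract a countable subcover, and then conclude via $(3)$ and monotonicity. Net comparison: the paper's route is shorter because positivity of the relevant box measures is read off directly from $\supp(\mu)$; yours derives $(3)$ rather than $(1)$ from $(4)$, which is a touch more quantitative, at the cost of invoking compactness and Radon regularity.
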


\begin{proof}
We first show (\ref{it: supp=lamin}) 
implies (\ref{it: 0 autointersection}) :
if $i(\mu,\mu)>0$ then there exist
open subsets $A,B$ of $(\deH)^{(2)}$ with 
$A\times B \subset \calG$ and 
$$(\mu\times\mu)(A\times B)=\mu(A)\mu(B)>0,$$
hence $\mu(A),\mu(B)>0$.
Then there exists  
$g\in A\cap \supp(\mu)$ and $g'\in B\cap \supp(\mu)$. 
Since  $A\times B \subset \calG$, 
in particular $g$ and $g'$ intersect in a point,
hence $\supp(\mu)$ is not a lamination.

We now prove that (\ref{it: 0 autointersection}) implies (\ref{it: oc times
  oc}) :  since $i(\mu,\mu)=0$,
 we have $(\mu\times\mu)(A\times B)=0$ for all transverse 
Borel subsets $A,B$ of $(\deH)^{(2)}$ (namely every pair of  geodesics
$(a,b)\in A$, $(a',b')\in B$ intersect in one point).
The claim follows
as $A=\ioc{d}{a}\times \ioc{b}{c}$ and $B=\ioc{a}{b}\times \ioc{c}{d}$ are
transverse.

It is clear that  (\ref{it: oc times oc}) 
implies (\ref{it: oo times oo in dense X}).
 
Suppose now (\ref{it: oo times oo in dense X}).
Let $g$, $g'$ be two geodesics in $\supp(\mu)$. 
If they are transverse, then $g=(x,y)$, $g'=(x',y')$ with
$(x,x',y,y')$ positively oriented. Then by density of $X$ 
there exists  
$(a,b,c,d)$ in $X^{[4]}$ such that
$(x,a,x',b,y,c,y',d)$ is positively oriented.
Then $g\in \ioo{d}{a}\times \ioo{b}{c}$ and $g'\in \ioo{a}{b}\times \ioo{c}{d}$, 
and as $g$, $g'$ are in
the support of $\mu$, we have 
$\mu(\ioo{d}{a}\times \ioo{b}{c})>0$ and $\mu(\ioo{a}{b}\times \ioo{c}{d})>0$, 
a contradiction.
Hence $\supp(\mu)$ is a lamination, 
proving (\ref{it: supp=lamin}).
\end{proof}
An important concept in \cite{BIPP1} was that of
\emph{$\mu$-short geodesic}, namely a  geodesic not intersecting in a point any 
geodesic in the support of $\mu$; observe that a geodesic $(a,b)$ is $\mu$-short if and only if 
\bqn
\mu(\ioo{a}{b}\times\ioo{b}{a})=0\,.
\eqn
It follows from Proposition \ref{prop:4pointCriterionForLamCurr} that if the current $\mu$ is of lamination type, its support consists of $\mu$-short geodesics.

\subsection{The tree associated to a current of lamination type}\label{s.treemu}
We now recall the construction of the tree $\calT(\mu)$ associated to a current $\mu$ of lamination type. We chose here a description adapted to our purposes, but this agrees with the standard construction described, for example, in \cite{Mor-Sha} and \cite[\S11.12]{Kapo}. 

Given a geodesic current $\mu$, we consider the straight
pseudodistance on $\H$ \cite[\S~4]{BIPP1}
\bqn
d_\mu(x,y) = \frac{1}{2} \;\big\{\mu(\calG^\pitchfork_{[x,y)}) + \mu(\calG^\pitchfork_{(x,y]})\big\}
\eqn
where for a possibly empty geodesic segment $I \subset \H$ we define
\bqn
\calG^\pitchfork_I = \{(g_-,g_+) \in (\deH)^{(2)}: |g \cap I | = 1\}
\eqn
as the set of geodesics $g$ that intersect transversely the geodesic segment $I$.

If $\mu$ is of lamination type, then the quotient metric space $X_\mu = \H/\!\!\sim$, 
  obtained by identifying points at $d_\mu$-distance zero, is $0$-hyperbolic in the sense of Gromov and 
  can therefore be canonically embedded in a minimal  $\bR$-tree $\calT(\mu)$. We will denote by $\calV(\mu)$ the image in $\calT(\mu)$ of the complementary regions $\calR$ of $\supp(\mu)$. It corresponds to the set of branching points of $\calT(\mu)$.

Since $\mu$ is $\Gamma$-invariant, the group $\Gamma$ acts on $\calT(\mu)$ and therefore on $\calV(\mu)$ by isometries. A direct consequence of the definition of Bonahon intersection is that, for this action,
$$\ell_{\calT(\mu)}(\gamma)=i(\mu,\delta_c)$$
for hyperbolic $\gamma$ representing a closed geodesic $c$.

\section{Positive crossratios}\label{sec:poscr}

In this section  we introduce the notion of
positive crossratio $[\,\cdot\,,\,\cdot\,,\,\cdot\,,\,\cdot\,]$, prove that its  periods are well defined, and discuss examples.
\subsection{Positive crossratios}\label{s.3.1}

Let $\Sigma=\Gamma\backslash\H$ be a finite area hyperbolic surface
and let $\Gamma<\PSL(2,\bR)$ be its fundamental group realized as a torsion-free lattice in $\PSL(2,\bR)$.

\begin{defn}\label{defn:cr}  Let $ X\subset\deH$ be a $\Gamma$-invariant non-empty subset.
	A {\em crossratio} on $ X$ 
	is a real valued function $[\,\cdot\,,\,\cdot\,,\,\cdot\,,\,\cdot\,]$
	defined on $ X^{[4]}$ satisfying the following properties:
	\be
	\item[(CR1)] it is $\Gamma$-invariant;
	\item[(CR2)] $[x,y,z,t]=[z,t,x,y]$ 
	for all $(x,y,z,t)\in  X^{[4]}$;
	\item[(CR3)] $[x,y,z,t]+[x,z,w,t]=[x,y,w,t]$ 
	whenever $(x,y,z,w,t)$ is positively oriented.
	\ee
	
	The crossratio is in addition {\em positive} if
	\be
	\item[(CR4)] $[x,y,z,t]\geq 0$ 
	for all $(x,y,z,t)\in X^{[4]}$.
	\ee
\end{defn}

\begin{remark}\label{rem:cr}
	There are many different non-equivalent notions of crossratio available in the literature, 
	and there is no standard choice %
	of the order of the arguments of the function
	$[\,\cdot\,,\,\cdot\,,\,\cdot\,,\,\cdot\,]$.
	More specifically \begin{itemize}
		\item if 
		$B(\cdot,\cdot,\cdot,\cdot)$ is a crossratio according to \cite[Definition 2.4]{Martone_Zhang} 
		(which agrees with \cite[Definition 1.f]{Ledrappier}), then
		\bqn
		[a,b,c,d]=B(a,d,c,b)
		\eqn
		is a crossratio according to Definition~\ref{defn:cr}. However we do not require continuity, and our crossratio is defined only on a smaller set. 
		
		\item if $Cr(\cdot,\cdot,\cdot,\cdot)$ is a crossratio according to \cite[p.~1]{H5}, then
		\bqn
		[a,b,c,d]=\log Cr(b,c,d,a)
		\eqn
		is a crossratio according to Definition~\ref{defn:cr}. 
		However, in \cite[p.~1]{H5} the crossratio is  H\"older continuous.
		
		\item if $\mathbb B(\cdot,\cdot,\cdot,\cdot)$ is a crossratio according to \cite[p.~1]{Labourie}, then
		\bqn
		[a,b,c,d]=\log \mathbb B(a,c,d,b)
		\eqn
		is a crossratio according to Definition~\ref{defn:cr}. 
		However, the definition in \cite[p.~1]{Labourie}  requires H\"older continuity and a much stronger positivity than what we impose, namely the strict inequality in (CR4).
	\end{itemize}
\end{remark}

As a direct consequence of (CR3),
positive crossratios have the following monotonicity property\smallskip

\noindent
\begin{minipage}{.7\textwidth}
	\begin{lem}
		For all $(x_1,x_2,x_3,x_4)$ and $(x'_1,x'_2,x'_3,x'_4)$  in $ X^{[4]}$
		such that 
		\bqn
		\ioo{x_4}{x_1}\subset\ioo{x'_4}{x'_1}\quad\text{ and }\quad\ioo{x_2}{x_3}\subset\ioo{x'_2}{x'_3}\,,
		\eqn
		we have
		\bq\label{eq:cr_monotone}
		[x_1,x_2,x_3,x_4]\leq[x'_1,x'_2,x'_3,x'_4]\,.
		\eq
	\end{lem}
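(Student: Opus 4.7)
My plan is to reduce the desired monotonicity to a chain of single-argument moves, each controlled by the axioms (CR3) and (CR4). The combinatorial content of the inclusions $\ioo{x_4}{x_1}\subset\ioo{x'_4}{x'_1}$ and $\ioo{x_2}{x_3}\subset\ioo{x'_2}{x'_3}$, combined with the positive orientation of both quadruples, is that the eight points (allowing coincidences $x_i=x'_i$, which make the corresponding step vacuous) appear on $\deH$ in the cyclic order
\[
x_1,\; x'_1,\; x'_2,\; x_2,\; x_3,\; x'_3,\; x'_4,\; x_4.
\]

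The workhorse is a one-step lemma extracted from (CR3) and (CR4): for every positively oriented $(a,b,c,d,e)\in X^{[5]}$, rewriting (CR3) as $[a,b,d,e]=[a,b,c,e]+[a,c,d,e]$ and invoking (CR4) yields
\[
[a,b,c,e]\leq[a,b,d,e]\quad\text{and}\quad[a,c,d,e]\leq[a,b,d,e].
\]
Geometrically, enlarging a ``middle'' argument of the crossratio in the direction permitted by the cyclic order can only increase its value.

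I then perform four successive substitutions $x_1\mapsto x'_1$, $x_2\mapsto x'_2$, $x_3\mapsto x'_3$, $x_4\mapsto x'_4$, using the flip symmetry (CR2) whenever the variable being modified occupies an outer slot of the current crossratio and must be rotated into a middle slot before the one-step lemma can be applied. Concretely, the replacement $x_1\mapsto x'_1$ uses $[x_1,x_2,x_3,x_4]=[x_3,x_4,x_1,x_2]$ followed by the lemma on $(x_3,x_4,x_1,x'_1,x_2)$; the replacements $x_2\mapsto x'_2$ and $x_3\mapsto x'_3$ apply the lemma directly to $(x'_1,x'_2,x_2,x_3,x_4)$ and $(x'_1,x'_2,x_3,x'_3,x_4)$; and $x_4\mapsto x'_4$ flips via (CR2) and applies the lemma to $(x'_3,x'_4,x_4,x'_1,x'_2)$. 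Concatenating the four inequalities produces \eqref{eq:cr_monotone}. There is no conceptual difficulty; the only care required is verifying that each intermediate $5$-tuple is positively oriented and lies in $X^{[5]}$, which is immediate from the cyclic arrangement displayed above.
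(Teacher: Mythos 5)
Your proof is correct and follows exactly the route the paper intends: the paper states the lemma without proof, remarking only that it is ``a direct consequence of (CR3)'' (together, implicitly, with (CR4)), and your argument is a careful expansion of that remark — establishing the cyclic arrangement of the eight points, deriving the one-step monotonicity from $[a,b,d,e]=[a,b,c,e]+[a,c,d,e]\geq\max\{[a,b,c,e],[a,c,d,e]\}$, and chaining four single-argument replacements via (CR2) and the one-step lemma. The bookkeeping about coincidences $x_i=x'_i$ making a step vacuous is the right way to handle the degenerate $5$-tuples, and each intermediate $5$-tuple is indeed positively oriented as you assert.
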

	
\end{minipage}
\begin{minipage}{.3\textwidth}
	\hskip.5cm
	\begin{tikzpicture}[scale=1.2]
		\draw (0,0) circle [radius=1cm];
		\draw[blue, very thick] (.5,-.87) arc[start angle=-60, end angle=60, radius=1];
		\draw[blue, very thick] (-.5,.87) arc[start angle=120, end angle=240, radius=1];
		
		\draw (-1,0) node [left,blue] { $\ioo{x'_4}{x'_1}$};
		\draw (1,0) node [right,blue] { $\ioo{x'_2}{x'_3}$};
		\filldraw (.87,.5) circle [radius=1pt] node[right] {$x_3$};
		\filldraw (.87,-.5) circle [radius=1pt] node[right] {$x_2$};
		\filldraw (-.87,.5) circle [radius=1pt] node[left] {$x_4$};
		\filldraw (-.87,-.5) circle [radius=1pt] node[left] {$x_1$};
		
		\filldraw (.5,.87) circle [radius=1pt] node[right] {$x'_3$};
		\filldraw (.5,-.87) circle [radius=1pt] node[right] {$x'_2$};
		\filldraw (-.5,.87) circle [radius=1pt] node[left, above] {$x'_4$};
		\filldraw (-.5,-.87) circle [radius=1pt] node[left] {$x'_1$};
		
	\end{tikzpicture}
\end{minipage}

\smallskip

To gain some intuition on the properties (CR2) and (CR3), we recall that if 
$x,y,z,t\in\deH=\bR\cup\{\infty\}$ and
\bqn
[x,y,z,t]=\ln\frac{(x-z)(y-t)}{(x-y)(z-t)}
\eqn
is the logarithm of the usual crossratio, 
the Liouville measure $\calL$ has the property that 
\bq\label{eq:liou_cr}
\calL(\ioo{t}{x}\times\ioo{y}{z})=[x,y,z,t]\,.
\eq

\noindent\begin{minipage}{.5\textwidth}
	Thus (CR2) corresponds to the flip-invariance of $\calL$
	\bqn
	\calL(\ioo{t}{x}\times\ioo{y}{z})=\calL(\ioo{y}{z}\times\ioo{t}{x})
	\eqn
	and (CR3) to additivity
	\bqn
	\calL(\ioo{t}{x}\times\ioo{y}{w})
	=\calL(\ioo{t}{x}\times\ioo{y}{z})+\calL(\ioo{t}{x}\times\ioo{z}{w})
	\eqn
since
	\bqn
	\calL(\ioo{t}{x}\times\{z\})=0\,.
	\eqn
\end{minipage}
\begin{minipage}{.5\textwidth}
	\hskip2.5cm
	\begin{tikzpicture}
		\draw (0,0) circle [radius=1.5cm];
		\draw[orange, very thick] (.7,-1.32) arc[start angle=-61.5, end angle=0, radius=1.5];
		\draw[red, very thick] (1.48,0) arc[start angle=0, end angle=60, radius=1.5];
		\draw[green, very thick] (-1.32,.7) arc[start angle=151.5, end angle=208, radius=1.5];
		\filldraw (1.5,0) circle [radius=1pt] node[right] {$z$};
		\filldraw (0.7,1.32) circle [radius=1pt] node[above right] {$w$};
		\filldraw (0.7,-1.32) circle [radius=1pt] node[below right] {$y$};
		\filldraw (-1.32,-0.7) circle [radius=1pt] node[below left] {$x$};
		\filldraw (-1.32,0.7) circle [radius=1pt] node[above left] {$t$};
		\draw (-1.32,0.7) arc[start angle=271.5, end angle=302.5, radius=4];
		\draw (-1.32,-0.7) arc[start angle=88.5, end angle=58, radius=4];
	\end{tikzpicture}
\end{minipage}


\subsection{Examples}
There are two natural crossratios associated to a geodesic current:
\begin{example}\label{ex:crmu}
	If $\mu$ is a current, 
	it is easily checked that
	$$[a,b,c,d]_{\mu}^+:=\mu(\ioc{d}{a}\times\ioc{b}{c})$$
	defines a positive crossratio on $\deH$.
	Similarly,
	$$[a,b,c,d]_{\mu}^-:=\mu(\ico{d}{a}\times\ico{b}{c})$$
	defines a positive crossratio on $\deH$.
	Note that these two crossratios may be different (for example this is the case if
	$\mu=\delta_c$ for some closed geodesic $c$).
\end{example}

Framed actions on trees give other fundamental examples of crossratios:
\begin{example}\label{ex:trees} If $\calT$ is a real tree, 
	we denote by
	$[\,\cdot\,,\,\cdot\,,\,\cdot\,,\,\cdot\,]_\calT$
	the usual crossratio  on the boundary $\partial_\infty\calT$ of the
	tree $\calT$: for every pairwise distinct 
	$(a,b,c,d)\in \partial_\infty\calT^4$,
	$[a,b,c,d]_\calT$ is the signed distance, 
	on the oriented geodesic 
	from $a$ to $d$,  from the orthogonal projection $\beta_\calT(a,b,d)$ of $b$
	to the orthogonal projection $\beta_\calT(a,c,d)$ of $c$.
	Note that
	\begin{equation}
		\label{eq: comp barycenter in a tree}
		|[a,b,c,d]_\calT|=d(\beta_\calT(a,b,d),\beta_\calT(a,c,d))
	\end{equation}
	where $d$ denotes the distance in $\calT$.
	\end{example}
	%
	
	A {\em framed action} of $\Gamma$ on $\calT$ is 
	an action by isometries $\rho\colon\Gamma\to\Isom(\calT)$ admitting
	an injective equivariant map (a {\em  framing})
	$\varphi\colon X\to \partial_\infty\calT$
	where $ X$ is some $\Gamma$-invariant non-empty subset of $\deH$.
	Then the  crossratio $[\,\cdot\,,\,\cdot\,,\,\cdot\,,\,\cdot\,]_\calT$
	on $\partial_\infty\calT$ induces a crossratio
	$[\,\cdot\,,\,\cdot\,,\,\cdot\,,\,\cdot\,]_\varphi$ on $ X$ defined by
	\bqn
	[x_1,x_2,x_3,x_4]_\varphi
	:=[\varphi(x_1),\varphi(x_2),\varphi(x_3),\varphi(x_4)]_\calT
	\eqn
	for every $(x_1,x_2,x_3,x_4)\in X^{[4]}$.

\begin{example}
An example of such situation is given by the $\Gamma$-action on the $\bR$-tree $\calT(\mu)$
associated to a current of lamination type.  Let $X\subset\deH$ be the set of fixed point of hyperbolic elements 
whose axis are transverse to the geodesic lamination $\supp(\mu)$.  Then for every such $\gamma\in\Gamma$
with $\{\gamma_-,\gamma_+\}\subset X$, the element $\gamma$ acts on $\calT(\mu)$ with strictly positive translation length 
$\ell_{\calT(\mu)}(\gamma)=i(\mu,\delta_c)$ (see \S~\ref{s.treemu}) and has thus an attractive fixed point $\varphi(\gamma_+)$
and a repulsive one $\varphi(\gamma_-)$ in $\partial_\infty\calT(\mu)$.
Then $\varphi\colon X\to\partial_\infty\calT(\mu)$ is a framing and it follows from the definition of the distance on $\calT(\mu)$ that
\bqn
\mu(\icc{x_4}{x_1}\times\icc{x_2}{x_3})=[\varphi(x_1), \varphi(x_2), \varphi(x_3), \varphi(x_4)]_\calT\,.
\eqn
It follows from the discussion recalled in \S\ref{s1:trees} that the crossratio is positive.
\end{example}

Example \ref{ex:trees} inspires the following definition:
\begin{defn}
	\label{def: ultrametric CR}
	We say that a crossratio is {\em ultrametric} if it satisfies :
	\begin{center}
		(CRU) $[a,b,c,d] \times [b,c,d,a]=0$ 
		for all $(a,b,c,d)$ in  $ X^{[4]}$.
	\end{center}
\end{defn}

The following is clear.
\begin{prop}
	\label{prop: cr from tree is ultram}
	The crossratio induced by a framed action on a $\bR$-tree 
	is ultrametric.
\end{prop}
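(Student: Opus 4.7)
The plan is a direct geometric case analysis based on the structure of four points in the boundary of an $\bR$-tree. Given $(a,b,c,d)\in X^{[4]}$, I would set $A=\varphi(a), B=\varphi(b), C=\varphi(c), D=\varphi(d)\in\partial_\infty\calT$ and consider the convex hull of $\{A,B,C,D\}$ in $\calT$. Since $\calT$ is an $\bR$-tree, this hull is either a degenerate ``star'' with a single interior branch point, or a finite subtree with exactly two branch points $P\neq Q$ joined by a segment of length $\ell>0$, with two of the four rays attached at $P$ and the remaining two at $Q$. In the non-degenerate case the 2-2 partition of $\{A,B,C,D\}$ into the $P$-pair and the $Q$-pair falls into three combinatorial types: $\{A,B\}|\{C,D\}$, $\{A,D\}|\{B,C\}$, or the ``linking'' type $\{A,C\}|\{B,D\}$.

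The bulk of the argument is an immediate computation of the four medians $\beta_\calT(A,B,D),\beta_\calT(A,C,D),\beta_\calT(B,C,A),\beta_\calT(B,D,A)$ in each type, using the fact that the median of three boundary points is the unique common point of their three geodesic rays. For the type $\{A,B\}|\{C,D\}$ one finds $\beta_\calT(B,C,A)=\beta_\calT(B,D,A)=P$, so by~(\ref{eq: comp barycenter in a tree}) the crossratio $[b,c,d,a]_\varphi$ vanishes; symmetrically, for the type $\{A,D\}|\{B,C\}$ one gets $\beta_\calT(A,B,D)=\beta_\calT(A,C,D)$, so $[a,b,c,d]_\varphi$ vanishes; the degenerate case is immediate since then all four medians coincide. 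In each of these three situations the product $[a,b,c,d]_\varphi\cdot[b,c,d,a]_\varphi$ is zero, which verifies (CRU).

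The crux of the proof, and the only real obstacle, is ruling out the linking partition $\{A,C\}|\{B,D\}$. In that configuration one computes $\beta_\calT(A,B,D)=Q=\beta_\calT(B,D,A)$ while $\beta_\calT(A,C,D)=P=\beta_\calT(B,C,A)$, and the signed distance convention on the oriented geodesics $A\to D$ and $B\to A$ produces $[a,b,c,d]_\varphi=[b,c,d,a]_\varphi=-\ell$, so the product would be $\ell^2\neq 0$. This configuration is however incompatible with the positivity axiom (CR4) on the induced crossratio, since $(a,b,c,d)$ is positively ordered in $\deH$ and forces $[a,b,c,d]_\varphi\geq 0$. Hence the linking type cannot arise from a framing with positive crossratio, and combining with the previous paragraph gives (CRU) in all cases.
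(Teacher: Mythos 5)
Your case analysis is correct, and since the paper supplies no argument for this proposition (it is merely asserted to be ``clear''), there is nothing to compare against. The reduction to the three pairings of the four boundary rays in the quadripod, the median computations, and the sign bookkeeping on the two oriented geodesics $A\to D$ and $B\to A$ are all right.

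One thing worth making explicit: you correctly identify the linking configuration $\{A,C\}\,|\,\{B,D\}$ as the only obstacle, and you are right that there both $[a,b,c,d]_\varphi$ and $[b,c,d,a]_\varphi$ equal $-\ell$, so (CRU) genuinely fails. But you rule it out by invoking (CR4), which is \emph{not} among the hypotheses of the proposition as stated --- the proposition speaks of ``a framed action'', not ``a framed action whose induced crossratio is positive''. Read literally, then, the statement is false and your linking computation is a counterexample: any framing whose boundary map reverses the cyclic order on some positively oriented quadruple produces nonzero product. What you have in fact proved is the more precise statement ``\emph{if} the induced crossratio is positive, then it is ultrametric'', which is exactly what is used throughout the paper (Proposition~\ref{thm: framed trees} and Theorem~\ref{thm:SL2 bis} always assume positivity), but the positivity hypothesis deserves to be stated explicitly in your write-up. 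Your computation also exhibits the nice fact that, for tree crossratios, positivity on positively oriented quadruples and the ultrametric identity (CRU) are the same compatibility condition between the framing and the cyclic order on $X$.
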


The following is a corollary of 
Proposition \ref{prop:4pointCriterionForLamCurr}.
\begin{prop}
	\label{prop: cr from lamin curr is ultram}
	The crossratio 
	$$[a,b,c,d]:=\mu(\ioc{d}{a}\times\ioc{b}{c})$$
	associated to a lamination type current
	$\mu$ is ultrametric.
\end{prop}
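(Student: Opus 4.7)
The plan is to prove this directly by unwinding the definition of the crossratio and applying the equivalent characterization of lamination type currents given by Proposition~\ref{prop:4pointCriterionForLamCurr}.

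First I would compute the two factors. By definition, $[a,b,c,d]=\mu(\ioc{d}{a}\times\ioc{b}{c})$. To evaluate $[b,c,d,a]$, I substitute $(a',b',c',d')=(b,c,d,a)$ into the same formula, obtaining
\bqn
[b,c,d,a]=\mu(\ioc{d'}{a'}\times\ioc{b'}{c'})=\mu(\ioc{a}{b}\times\ioc{c}{d}).
\eqn
Note that since $(a,b,c,d)\in X^{[4]}$ is positively oriented, so is the cyclic shift $(b,c,d,a)$, so this substitution is legal.

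Hence the product we must show vanishes is exactly
\bqn
[a,b,c,d]\cdot[b,c,d,a]=\mu(\ioc{d}{a}\times\ioc{b}{c})\cdot\mu(\ioc{a}{b}\times\ioc{c}{d}).
\eqn
Since $\mu$ is of lamination type, the implication (\ref{it: supp=lamin}) $\Rightarrow$ (\ref{it: oc times oc}) in Proposition~\ref{prop:4pointCriterionForLamCurr} applies and gives precisely that this product is zero for every $(a,b,c,d)\in(\deH)^{[4]}$, hence in particular for $(a,b,c,d)\in X^{[4]}$. This establishes (CRU).

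There is no real obstacle here: the proposition is essentially a translation of the $4$-point characterization of measured laminations into the language of crossratios, and the only bookkeeping is to check that the cyclic shift $(a,b,c,d)\mapsto(b,c,d,a)$ on the crossratio corresponds to swapping the roles of the two products appearing in condition (\ref{it: oc times oc}).
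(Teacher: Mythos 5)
Your proof is correct and takes precisely the argument the paper intends (the paper states the result as a corollary of Proposition~\ref{prop:4pointCriterionForLamCurr} without writing out the verification). You correctly unwind that $[b,c,d,a]=\mu(\ioc{a}{b}\times\ioc{c}{d})$ via the cyclic substitution, and then condition~(\ref{it: oc times oc}) of Proposition~\ref{prop:4pointCriterionForLamCurr} gives exactly the required vanishing.
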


\subsection{The periods of the crossratio}\label{ssec:periods}
Let now $[\,\cdot\,,\,\cdot\,,\,\cdot\,,\,\cdot\,]$ be a positive crossratio defined on $ X\subset\deH$,
and let $\gamma\in\Gamma$  be hyperbolic such that $\{\gamma_-,\gamma_+\}\subset X$.
The additivity property (CR3) of the crossratio implies that the value 
$[\gamma_-,x,\gamma x,\gamma_+]$ is independent of $x\in\ioo{\gamma_-}{\gamma_+}\cap X$.
This justifies the following:

\begin{defn}\label{d.period} If $[\,\cdot\,,\,\cdot\,,\,\cdot\,,\,\cdot\,]$ is a positive crossratio on $ X$
	and $\gamma\in\Gamma$ is hyperbolic such that $\{\gamma_-,\gamma_+\}\subset X$,
	the {\em period} of $\gamma$ is defined as
	\bqn
	\per(\gamma):=[\gamma_-,x,\gamma x,\gamma_+]\,.
	\eqn
        for one (any) $x\in\ioo{\gamma_-}{\gamma_+}\cap X$.
\end{defn}

The purpose of this section is to extend the definition of the period of a crossratio defined on a $\Gamma$-invariant set $ X\subset \partial\H$ to hyperbolic elements $\g\in\Gamma$ whose endpoints do not necessarily belong to the set $ X$. 

This is achieved by the following:
\begin{prop}\label{p.welldef}
	Let $[\,\cdot\,,\,\cdot\,,\,\cdot\,,\,\cdot\,]$  be a positive crossratio on $ X$, and $\g\in\Gamma$ be a hyperbolic element. Choose monotone sequences $(x_n), (x_n')\subset X$ with limit $\gamma_-$ and $(y_n), (y_n')\subset X$ with limit $\gamma_+$. Assume furthermore that $(x_n',\gamma_-,x_n,y_n,\g_+,y_n')$ is positively oriented. Then, for all $x\in X$,
	$$\lim_{n\to\infty}[x_n,x,\g x, y_n]=\lim_{n\to\infty}[x_n',x,\g x, y_n'].$$ 
\end{prop}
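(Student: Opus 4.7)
The plan is to derive a decomposition identity linking the ``inside'' sequence $(x_n,y_n)$ and the ``outside'' sequence $(x_n',y_n')$ via (CR2) and (CR3), use the monotonicity inequality \eqref{eq:cr_monotone} to reduce the statement to showing that two non-negative ``boundary'' crossratios vanish in the limit, and finally exploit $\Gamma$-invariance (CR1) through a telescoping sum to prove that vanishing. The main obstacle will be the last step: in the absence of any continuity hypothesis, it is not a priori clear why a crossratio of the form $[a,x,\gamma x,a']$ with $a\to\gamma_-$ from inside and $a'\to\gamma_-$ from outside should tend to $0$.

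First I apply (CR3) to the positively oriented 5-tuples $(\gamma x,y_n,y_m',x_m',x)$ and $(\gamma x,y_n,x_m',x_n,x)$, both visible in the given cyclic order $x_m',\gamma_-,x_n,x,\gamma x,y_n,\gamma_+,y_m'$, and rewrite via (CR2) to produce
\bqn
[x_n,x,\gamma x,y_n]=[x_n,x,\gamma x,x_m']+[x_m',x,\gamma x,y_m']+[y_m',x,\gamma x,y_n]. \quad (\star)
\eqn
The monotonicity inequality \eqref{eq:cr_monotone}, applied to the arcs $\ioo{y_n}{x_n}$ (shrinking in $n$) and $\ioo{y_m'}{x_m'}$ (growing in $m$), makes $n\mapsto[x_n,x,\gamma x,y_n]$ non-increasing with limit $L\geq 0$ and $m\mapsto[x_m',x,\gamma x,y_m']$ non-decreasing with limit $L'\geq 0$; the same reasoning shows the two boundary terms in $(\star)$ are non-negative and jointly non-increasing in $(m,n)$. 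Taking iterated limits in $(\star)$, the proposition reduces to proving that $A_\infty:=\lim_m\lim_n[x_n,x,\gamma x,x_m']$ and $B_\infty:=\lim_m\lim_n[y_m',x,\gamma x,y_n]$ both vanish.

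To show $A_\infty=0$ (the argument for $B_\infty$ is symmetric), I iterate (CR3) in the positively oriented tuple $(x_n,x,\gamma x,\gamma^2 x,\ldots,\gamma^j x,x_n')$ and apply (CR1) to each summand, obtaining the telescoping decomposition
\bqn
[x_n,x,\gamma^j x,x_n']=\sum_{i=0}^{j-1}[x_n,\gamma^i x,\gamma^{i+1}x,x_n']=\sum_{i=0}^{j-1}[\gamma^{-i}x_n,x,\gamma x,\gamma^{-i}x_n'].
\eqn
Since $\gamma^{-1}$ contracts both arcs $\ioo{\gamma_-}{\gamma_+}$ and $\ioo{\gamma_+}{\gamma_-}$ toward $\gamma_-$, the interval $\ioo{\gamma^{-i}x_n'}{\gamma^{-i}x_n}$ shrinks with $i$, so each summand is non-increasing in $i$ and the last one satisfies $[\gamma^{-(j-1)}x_n,x,\gamma x,\gamma^{-(j-1)}x_n']\leq\tfrac{1}{j}[x_n,x,\gamma^j x,x_n']$. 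The crucial uniform bound comes from fixing any $z\in X\cap\ioo{\gamma_+}{\gamma_-}$ (for instance $z:=y_1'$): since $\gamma^j x\in\ioo{x}{\gamma_+}\subset\ioo{x}{z}$ for every $j\geq 1$, monotonicity gives $[x_n,x,\gamma^j x,x_n']\leq[x_n,x,z,x_n']\leq[x_1,x,z,x_1']=:C$, independent of $j$ and $n$.

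Finally, given $\epsilon>0$, choose $j$ with $C/j<\epsilon$ and set $\tilde x:=\gamma^{-(j-1)}x_1\in X$, $\tilde x':=\gamma^{-(j-1)}x_1'\in X$ (both in $X$ by $\Gamma$-invariance of $X$); this yields $[\tilde x,x,\gamma x,\tilde x']<\epsilon$. For $k$ large, monotonicity of $(x_k), (x_k')$ places $x_k$ (resp.\ $x_k'$) closer to $\gamma_-$ on the inside (resp.\ outside) arc than $\tilde x$ (resp.\ $\tilde x'$), so $\ioo{x_k'}{x_k}\subset\ioo{\tilde x'}{\tilde x}$ and monotonicity of the crossratio gives $[x_k,x,\gamma x,x_k']<\epsilon$. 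Hence $\lim_k[x_k,x,\gamma x,x_k']=0$, and since $A_\infty=\inf_{m,n}[x_n,x,\gamma x,x_m']\leq\lim_k[x_k,x,\gamma x,x_k']$, we conclude $A_\infty=0$. The symmetric argument gives $B_\infty=0$, whence $(\star)$ yields $L=L'$.
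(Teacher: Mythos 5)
Your proof is correct and follows essentially the same strategy as the paper's: decompose $[x_n,x,\gamma x,y_n]$ via (CR2)/(CR3) into an inner term plus two boundary terms, then kill the boundary terms by a telescoping sum exploiting $\Gamma$-invariance together with a uniform bound by a fixed crossratio. The only cosmetic difference is that the paper concludes the telescoping terms vanish simply because a non-negative series with bounded partial sums has vanishing general term, whereas you additionally use monotonicity in the telescoping index $i$ to extract the quantitative $C/j$ estimate.
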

\begin{figure}[h]
	\begin{center}
		\begin{tikzpicture}
			\draw (0,0) circle [radius=1.5cm];
			\filldraw (-1.0605, 1.0605) circle [radius=1pt] node[above left] {$x_n'$};
			\filldraw (-1.2975,.75) circle [radius=1pt] node[left] {$\gamma_-$};
			\filldraw (-1.5, 0) circle [radius=1pt] node[left] {$x_n$};
			
			\filldraw (1.0605, 1.0605) circle [radius=1pt] node[above right] {$y_n'$};
			\filldraw (1.2975,.75) circle [radius=1pt] node[right] {$\gamma_+$};
			\filldraw (1.5, 0) circle [radius=1pt] node[right] {$y_n$};
			\draw [red](-1.2975,.75)  to [out= -20 , in= 200] (1.2975,.75);
			
			\filldraw (-.75,-1.2975) circle [radius=1pt] node[below left] {$x$};
			\filldraw (.75,-1.2975) circle [radius=1pt] node[below right] {$\gamma x$};
		\end{tikzpicture}
	\end{center}
	\caption{Proposition~\ref{p.welldef}}\label{f.periods}
\end{figure}
\begin{proof}
	Up to passing to a subsequence we can and will assume that $(x_0,x,\gamma x, y_0)$ is positive (see Figure \ref{f.periods}).
	Since by (CR2) and (CR3) we have
	$$[x_n,x,\g x, y_n]=[x_n,x,\g x, x_n']+[x_n',x,\g x, y_n']+[y_n',x,\g x, y_n],$$
	it is enough to show that 
	$$\lim_{n\to\infty}[x_n,x,\g x, x_n']=0$$
	and the analogous statement for $[y_n',x,\g x, y_n]$.
	
	Since, by (CR4), the crossratio is positive, and $\g^{-n} x_0\to \gamma_-$, it is in turn enough to show that 
	$$\lim_{n\to\infty}[\g^{-n} x_0,x,\g x,\g^{-n} x_0']=0.$$
	This follows since, for every $N$,
	\bqn
	\ba[]
	\infty>[x_0,x,y_0',x_0']
	&\geq [x_0,x,\g^N x,x_0'] \\
	&\geq \sum_{j=0}^{N-1}[x_0,\g^jx,\g^{j+1} x,x_0'] \\
	&= \sum_{j=0}^{N-1}[\g^{-j}x_0,x,\g x,\g^{-j}x_0']. \\
	\ea
	\eqn
	Here in the last equality we used that the crossratio is $\Gamma$-invariant. The claim for $[y_n',x,\g x, y_n]$ follows analogously.
\end{proof}
Thanks to Proposition \ref{p.welldef} we can extend Definition \ref{d.period} to
\begin{defn}\label{d.period2} If $[\,\cdot\,,\,\cdot\,,\,\cdot\,,\,\cdot\,]$ is a positive crossratio on $ X$
	and $\gamma\in\Gamma$ is hyperbolic,
	the {\em period} of $\gamma$ is 
	\bqn
	\per(\gamma):=\lim_{\substack{s,t \in  X\\s\to \gamma_-\\ t\to\gamma_+}}[s,x,\gamma x,t]
	\eqn
		for one  (any)
	$x\in\ioo{\gamma_-}{\gamma_+}\cap X$.
      \end{defn}

\section{The geodesic current associated to a positive crossratio}\label{sec:cr->gc}
In this section we prove Theorem \ref{thm_intro:thm6} and Corollary
\ref{cor_intro:atomic}. The proof of Theorem \ref{thm_intro:thm6} is
carried out in three steps:  in
\S\ref{s.3.2} we use a crossratio to construct a geodesic current
$\mu_{[\,\cdot\,,\,\cdot\,,\,\cdot\,,\,\cdot\,]}$; in \S\ref{s.3.3}
we relate the periods of the crossratio
$[\,\cdot\,,\,\cdot\,,\,\cdot\,,\,\cdot\,]$  and the intersection of
curves with $\mu_{[\,\cdot\,,\,\cdot\,,\,\cdot\,,\,\cdot\,]}$; in
\S\ref{s.3.4} we conclude the proof of Theorem \ref{thm_intro:thm6} by
showing that $\mu_{[\,\cdot\,,\,\cdot\,,\,\cdot\,,\,\cdot\,]}$ depends
continuously on the crossratio $[\,\cdot\,,\,\cdot\,,\,\cdot\,,\,\cdot\,]$. 
The fact that an integer valued crossratio leads to a multicurve (Corollary \ref{cor_intro:atomic})
is shown in \S\ref{s.3.5}. 
We conclude the section discussing in \S\ref{s.rest} how crossratios 
and geodesic currents can be restricted to subsurfaces; 
this is for future reference and will be  used in the study of  the real spectrum compactification of maximal representations.

\subsection{Construction of the current}\label{s.3.2}
The aim of this section is to show that
a positive crossratio always leads to a geodesic current.
 This is done in Proposition \ref{prop:Rm}.
The strategy of the proof is 
first to associate to the crossratio a
finitely additive set function $\cRo$  
defined on the family of proper rectangles with vertices 
in $ X^{[4]}$
(Proposition \ref{prop: finite add mesure on Rec}),
 and then to build a canonical Radon measure
$\mu$ out of  $\cRo$. 

Fix a $\Gamma$-invariant 
non-empty subset $ X\subset\deH$,
and a positive crossratio 
$$[\,\cdot\,,\,\cdot\,,\,\cdot\,,\,\cdot\,]\colon X^{[4]}\longrightarrow[0,\infty).$$

A {\em rectangle} in $(\deH)^{(2)}$ is the product $R=I\times J$ of
two disjoint intervals $I,J\subset \deH$.
It is called {\em proper} 
if its closure in $(\deH)^{(2)}$ is compact,
that is $\overline I\cap\overline J=\varnothing$, and $I$ and $J$ have non-empty interior.
The {\em vertices} of $R$ are then the unique
positively oriented $4$-tuple $(a,b,c,d)$ in  $(\deH)^{[4]}$
such that $d,a$ are the enpoints of $I$ and $b,c$ are
the endpoints of $J$, equivalently
$$\ioo{d}{a}\times \ioo{b}{c}\subset R \subset \icc{d}{a}\times
\icc{b}{c}\; .$$
For $\calA\subset \deH$, we denote  $\Rec(\calA)$ the family 
of all proper rectangles with vertices in 
   $\calA^{[4]}$.

If $R$ is a proper rectangle with vertices $(a,b,c,d)$  in
  $ X^{[4]}$ we define
$$\cR{R}= [a,b,c,d]$$
the {\em crossratio} of the rectangle $R$.
It follows directly from the additivity property (CR3) of the crossratio that this defines a finitely
additive positive  function on the family $\Rec( X)$
of all proper rectangles with vertices 
in  $ X^{[4]}$:

\begin{prop}
\label{prop: finite add mesure on Rec} 
The function $\cRo\colon\Rec( X)\to\bR$ satisfies the following.
\begin{enumerate}
\item If
a rectangle  $R\in\Rec( X)$ is the  union $R=R_1\sqcup R_2$ of two rectangles with disjoint interior in $\Rec( X)$, then
\bqn
\cR{R}=\cR{R_1}+\cR{R_2}.
\eqn

\item For all $R,R'$
in $\Rec( X)$,
if $R\subset R'$ then $\cR{R}\leq \cR{R'}$ .

\end{enumerate}
\end{prop}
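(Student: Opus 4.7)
The plan is to treat the two claims separately.

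For (2), I would simply invoke the monotonicity estimate \eqref{eq:cr_monotone}. Indeed, if $R = I \times J$ and $R' = I' \times J'$ are proper rectangles with respective vertices $(a,b,c,d)$ and $(a',b',c',d')$ in $ X^{[4]}$, then the inclusion $R \subset R'$ forces $\ioo{d}{a} \subset \ioo{d'}{a'}$ and $\ioo{b}{c} \subset \ioo{b'}{c'}$. Inequality \eqref{eq:cr_monotone} then yields $[a,b,c,d] \leq [a',b',c',d']$, which is exactly $\cR{R} \leq \cR{R'}$.

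For (1), the first step is a short combinatorial observation: if a proper rectangle $R \in \Rec( X)$ with vertices $(a,b,c,d)$ is partitioned as $R = R_1 \sqcup R_2$ by two rectangles in $\Rec( X)$ with disjoint interiors, then the partition must be either \emph{vertical}, with $I_1 = I_2 = I$ and $J$ cut by a single point $e \in \ioo{b}{c}$, or \emph{horizontal}, with $J_1 = J_2 = J$ and $I$ cut by a single point $f \in \ioo{d}{a}$. This is because the two sub-rectangles must meet along a single common edge, whose endpoints are vertices of each $R_i$ and therefore lie in $ X$.

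In the vertical case, $R_1$ and $R_2$ have vertices $(a,b,e,d)$ and $(a,e,c,d)$ in $ X^{[4]}$, and the $5$-tuple $(a,b,e,c,d)$ is positively oriented, so (CR3) directly yields
\bqn
\cR{R_1} + \cR{R_2} = [a,b,e,d] + [a,e,c,d] = [a,b,c,d] = \cR{R}.
\eqn
In the horizontal case, $R_1$ and $R_2$ have vertices $(f,b,c,d)$ and $(a,b,c,f)$; applying (CR2) rewrites the three relevant crossratios as $[c,d,f,b]$, $[c,f,a,b]$ and $[c,d,a,b]$, and (CR3) applied to the positively oriented $5$-tuple $(c,d,f,a,b)$ gives the same additivity. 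The only mildly delicate point is the combinatorial dichotomy, which uses in an essential way that both $R_1$ and $R_2$ have vertices in $ X$; once that is in hand, each case collapses to a single application of (CR3), possibly preceded by (CR2) to swap which pair of coordinates is being cut.
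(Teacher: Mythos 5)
Your proof is correct, and it fills in details that the paper itself omits: the paper dispatches this proposition with the single sentence that ``it follows directly from the additivity property (CR3)'', so there is no competing argument to compare against. For (2), the reduction to the monotonicity inequality \eqref{eq:cr_monotone} is exactly the intended route; the one step worth spelling out (which you implicitly use) is that $R\subset R'$ with both rectangles \emph{proper} forces the open cores to nest, $\ioo{d}{a}\subset\ioo{d'}{a'}$ and $\ioo{b}{c}\subset\ioo{b'}{c'}$, because an endpoint of $R'$ landing in the open interval of $R$ would contradict the containment of that open set in $\icc{d'}{a'}$ (resp.\ $\icc{b'}{c'}$). For (1), the dichotomy vertical/horizontal and the subsequent applications of (CR3), with (CR2) used to move the cut into the second--third slot in the horizontal case, are exactly right; the check that the cut point lies strictly inside the relevant interval (so the $5$-tuple is positively oriented) is guaranteed by properness of $R_1$ and $R_2$. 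The only soft spot is the combinatorial observation that a decomposition into two rectangles with disjoint interiors must be a one-point vertical or horizontal split; you assert it rather than prove it, but this is a genuinely elementary fact about products of intervals (if neither $I_1$ nor $I_2$ equals $I$, then taking a column through a point missed by $I_1$ forces $J_2=J$, and symmetrically $J_1=J$, so the split is horizontal; otherwise it is vertical), and the paper clearly regards it as beneath mention. Overall this is a correct and appropriately detailed elaboration of the argument the paper leaves implicit.
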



\begin{remark}
The function $\cRo\colon\Rec( X) \to \bR$ 
may  not be $\sigma$-additive, even restricting to the family
of left half open rectangles 
$\ioc{d}{a}\times \ioc{b}{c}$  
 with $(a,b,c,d)\in  X^{[4]}$.
For example setting 
$$[a,b,c,d]:=\delta_e(\ico{d}{a}\times \ico{b}{c})$$
 for a closed curve $e$ corresponding to some hyperbolic $\gamma\in\Gamma$,
we get a positive crossratio
$[\,\cdot\,,\,\cdot\,,\,\cdot\,,\,\cdot\,]$ 
on $ X^{[4]}=\deH$ whose associated function
$\cRo$ is not $\sigma$-additive on $\Rec( X)$.
Take $a$,$c$ such that 
$(a,\gamma_+,c,\gamma_-)\in  X^{[4]}$ and
$\icc{\gamma_-}{a}\times \icc{\gamma_+}{c}$ contains no other point of
the orbit of
$(\gamma_-,\gamma_+)$.
Let $d_n\downarrow \gamma_-$ in $\ioo{\gamma_-}{a}$ 
and
$b_n\downarrow \gamma_+$ in $\ioo{\gamma_+}{c}$.
Let $R_n=\ioc{d_n}{a}\times \ioc{b_n}{c}$. Then
$R=\ioc{\gamma_-}{a}\times \ioc{\gamma_+}{c}$ is 
the increasing union of the $R_n$, and 
$\cR{R_n}=[a,b_n,c,d_n]=0$ for all $n$ whereas 
$\cR{R}=[a,\gamma_+,c,\gamma_-]=1$, contradicting
$\sigma$-additivity.
The problem is due to the fact that this crossratio is not
continuous at $(a,\gamma_+,c,\gamma_-)$.
\end{remark}

We now construct the mesure $\mu$.
Recall that for a rectangle $R\in\Rec( X)$
  with vertices $(a,b,c,d)$ we set
$\cR{R}= [a,b,c,d]$.
Furthermore we denote by $\ovcirc{R}$ (resp. $\ov{R}$) the open (resp. closed) rectangle with the same vertices as $R$.
\begin{prop}\label{prop:Rm} 
There exists a unique positive Radon measure $\mu$ on $(\deH)^{(2)}$
satisfying 
one of the following equivalent conditions.
\begin{enumerate}
\item 
\label{it: mu(int) leq cr leq mu(closure)}
 $\mu(\ovcirc R)\leq\cR{R}\leq \mu(\ov{R})$ for any (proper) rectangle $R\in\Rec( X)$.

\item
\label{it: R'<R <R''}
 For any (proper) rectangles $R, R'\in\Rec( X)$
 with $\ov{R'}\subset \ovcirc{R}$, we have  
 $\mu(R')\leq \cR{R}$  
 and $\cR{R'}\leq \mu(R)$.

\item
\label{it: open R}
For all  (proper) open rectangles $R\in\Rec(\deH)$
 $$\mu(R)=
 \sup\left\{ \cR{R'} :\, 
 R'\in \Rec( X) \text{ and }  
 \ov{R'} \subset R \right\}.
 $$

\item 
\label{it: closed R}
For all (proper) closed rectangles $R\in\Rec(\deH)$
 $$\mu(R)=
 \inf\left\{ \cR{R'}:\, 
 R'\in \Rec( X) \text{ and }  
 R \subset \ovcirc{R}' \right\}.$$

\end{enumerate}

\end{prop}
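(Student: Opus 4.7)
The overall approach is to first establish the equivalence of conditions (1)--(4) for any candidate Radon measure, then prove uniqueness, and finally construct $\mu$ by inner approximation on proper open rectangles. A preliminary observation is that $X$ is dense in $\deH$: since $\Gamma$ is a non-elementary torsion-free lattice in $\PSL(2,\bR)$, it acts minimally on $\deH$, so every $\Gamma$-invariant non-empty subset is dense. Combined with the monotonicity of $r$ (Proposition~\ref{prop: finite add mesure on Rec}(2)), density lets us approximate each proper rectangle $R \in \Rec(\deH)$ from inside and outside by rectangles in $\Rec(X)$: there exist sequences $R_n^\pm \in \Rec(X)$ with $\ov{R_n^-} \subset \ovcirc{R}$ whose interiors exhaust $\ovcirc R$, and $\ov R \subset \ovcirc{R_n^+}$ with $\ov{R_n^+}$ shrinking down to $\ov R$. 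The equivalences (1)--(4) then follow by combining this with the monotonicity of any candidate Radon measure and its continuity along monotone set sequences: (3) and (4) are just (1) rephrased via inner and outer approximation, and (2) interpolates between them.

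For uniqueness, condition (3) determines $\mu$ on every proper open rectangle in $\Rec(\deH)$, and such rectangles form a basis for the topology of $(\deH)^{(2)}$; a standard outer-regularity argument then forces two Radon measures satisfying (3) to agree on all Borel sets. For existence, I define $\mu_0$ on proper open rectangles $R \in \Rec(\deH)$ by $\mu_0(R) := \sup\{\cR{R'} : R' \in \Rec(X),\ \ov{R'} \subset R\}$. Monotonicity of $\mu_0$ is immediate, and finite additivity on decompositions $R = R_1 \sqcup R_2$ of a proper open rectangle into two such rectangles (up to common boundary) follows from the finite additivity of $r$ on $\Rec(X)$ (Proposition~\ref{prop: finite add mesure on Rec}(1)) applied to simultaneous inner approximations of $R_1$ and $R_2$, again using density of $X$ to match their shared vertices.

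The decisive technical step is $\sigma$-subadditivity: if $R \subset \bigcup_n R_n$ are proper open rectangles in $\Rec(\deH)$, take any $R' \in \Rec(X)$ with $\ov{R'} \subset R$; by compactness of $\ov{R'}$ finitely many $R_{n_i}$ already cover it, and density of $X$ lets us partition $R'$ into finitely many half-open rectangles in $\Rec(X)$, each contained in some $R_{n_i}$. Finite additivity of $r$ then gives $\cR{R'} \le \sum_i \mu_0(R_{n_i}) \le \sum_n \mu_0(R_n)$, and taking the supremum over $R'$ yields $\mu_0(R) \le \sum_n \mu_0(R_n)$. With these three properties, $\mu_0$ extends to a Radon measure $\mu$ on $(\deH)^{(2)}$ via the standard Carath\'eodory construction applied to open sets (setting $\mu(U) := \sup\{\mu_0(R) : R \text{ a proper open rectangle in } \Rec(\deH),\ \ov R \subset U\}$ on open $U$), and $\mu$ satisfies (3) by construction, hence all four conditions. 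The main obstacle is precisely this $\sigma$-subadditivity: the remark preceding the statement shows that $r$ itself fails to be $\sigma$-additive on half-open rectangles in $\Rec(X)$, and the crucial device is the strict inner approximation $\ov{R'} \subset R$, which produces a gap from the boundary that, together with compactness, bypasses the boundary discontinuities of $r$ responsible for this failure.
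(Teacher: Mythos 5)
Your approach is genuinely different from the paper's: where the paper constructs a positive linear functional $I$ on compactly supported continuous functions by sandwiching $f$ between simple functions built from rectangles in $\Rec(X)$ and then invokes the Riesz representation theorem, you attempt a direct Carath\'eodory-type premeasure extension of the set function $\mu_0(R) := \sup\{\cR{R'} : R'\in\Rec(X),\ \ov{R'}\subset R\}$. This alternative route is, as written, not correct, and the place where it breaks is exactly the boundary pathology that the Remark preceding the statement is there to warn about.

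The claimed finite additivity $\mu_0(R)=\mu_0(R_1)+\mu_0(R_2)$ for a decomposition of an open rectangle along a segment $\ell = I\times\{m\}$ is false in general. Gluing inner approximations of $R_1$ and $R_2$ only gives $\mu_0(R)\ge\mu_0(R_1)+\mu_0(R_2)$; the reverse inequality can fail. Take the crossratio $[a,b,c,d]=\delta_e(\ico{d}{a}\times\ico{b}{c})$ from the paper's Remark with $m=\gamma_+$: a rectangle $R'\in\Rec(X)$ straddling $\ell$ with $\ov{R'}\subset R$ picks up the atom of $\delta_e$ sitting on $\ell$, so $\mu_0(R)$ strictly exceeds $\mu_0(R_1)+\mu_0(R_2)$, neither of which sees it. For the same reason, defining $\mu(U)$ on open sets as $\sup\{\mu_0(R) : \ov R\subset U\}$ over \emph{single} rectangles cannot yield a measure: for disjoint open $U_1,U_2$, a connected compact $\ov R\subset U_1\sqcup U_2$ lies entirely in one of the two, so this supremum equals $\max(\mu(U_1),\mu(U_2))$, not the sum; you would need finite disjoint families of rectangles, and a correctly set up premeasure should live on a semi-ring such as half-open rectangles with $X$-vertices, not on open ones. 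Finally, in the $\sigma$-subadditivity step the jump from ``each $P_j$ lies in some $R_{n_i}$'' to $\cR{R'}\le\sum_i\mu_0(R_{n_i})$ is not immediate: several pieces may be assigned to the same $R_{n_i}$, so you need the separate estimate $\sum_{j:\,i(j)=i}\cR{P_j}\le\mu_0(R_{n_i})$ (completing the grid inside one $\Rec(X)$-rectangle whose closure lies in $R_{n_i}$), which you do not supply, and in any case $P_j\subset R_{n_i}$ alone is not enough --- you need $\ov{P_j}\subset R_{n_i}$, which requires a Lebesgue-number refinement you only tacitly assume. The paper's construction sidesteps all of this: it never evaluates the crossratio on open or closed rectangles, only squeezes $I_-(f)=I_+(f)$ for continuous $f$, so the boundary ambiguity of $\cRo$ contributes nothing, and the Riesz theorem hands back a Radon measure without further bookkeeping.
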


\begin{defn}\label{defn:gcCR}  We call the measure $\mu$ in Proposition~\ref{prop:Rm} the 
{\em geodesic current associated to the positive crossratio}
$[\,\cdot\,,\,\cdot\,,\,\cdot\,,\,\cdot\,]\colon X^{[4]}\to\bR$,
and denote it by $\mu_{[\,\cdot\,,\,\cdot\,,\,\cdot\,,\,\cdot\,]}$ if we want to emphasize the dependence 
on $[\,\cdot\,,\,\cdot\,,\,\cdot\,,\,\cdot\,]$.
\end{defn}

Proposition \ref{prop:Rm} implies the following ``outer and inner'' 
continuity properties of the current.
\begin{prop}
\label{lem:2.5}  
Let $(a,b,c,d)$ be a positively oriented quadruple in $\deH$.
Let $(a_n, b_n, c_n, d_n)_{n\geq1}$ be a sequence in $ X^{[4]}$
converging to $(a,b,c,d)$.
Then 
\begin{enumerate}
\item 
If $d_n,a_n\in\ioo{d}{a}$ and $b_n,c_n\in\ioo{b}{c}$ for all
  $n\geq1$, \\then 
$\mu(\ioo{d}{a}\times\ioo{b}{c})=\lim_n[a_n,b_n,c_n,d_n]$;

\item If $a_n,b_n\in\ioo{a}{b}$ and $c_n,d_n\in\ioo{c}{d}$ for all
  $n\geq1$, \\then 
$\mu(\icc{d}{a}\times\icc{b}{c})=\lim_n[a_n,b_n,c_n,d_n]$.
\end{enumerate}






\end{prop}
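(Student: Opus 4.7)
The plan is to deduce both statements from the sup/inf characterizations~(\ref{it: open R}) and (\ref{it: closed R}) of $\mu$ in Proposition~\ref{prop:Rm}, combined with the two-sided bound $\mu(\ovcirc R)\leq \cR{R}\leq \mu(\ov R)$ of Proposition~\ref{prop:Rm}(\ref{it: mu(int) leq cr leq mu(closure)}) and the monotonicity property \eqref{eq:cr_monotone} of positive crossratios.

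For part (1), I write $R_n$ for the rectangle with vertices $(a_n,b_n,c_n,d_n)$. The assumptions $d_n,a_n\in \ioo{d}{a}$ and $b_n,c_n\in \ioo{b}{c}$, together with positive orientation, force $\ov{R_n}\subset \ioo{d}{a}\times\ioo{b}{c}$, so that
\bqn
[a_n,b_n,c_n,d_n]=\cR{R_n}\leq \mu(\ov{R_n})\leq \mu(\ioo{d}{a}\times\ioo{b}{c}),
\eqn
which yields the $\limsup$ bound. For the matching $\liminf$ bound, I fix an arbitrary approximating rectangle $R'\in\Rec(X)$ with $\ov{R'}\subset \ioo{d}{a}\times\ioo{b}{c}$; since its vertices lie in the interior of the arcs $\ioo{d}{a}$ and $\ioo{b}{c}$ while $(a_n,b_n,c_n,d_n)\to(a,b,c,d)$, for $n$ large enough $\ov{R'}\subset \ovcirc{R_n}$, and monotonicity \eqref{eq:cr_monotone} gives $\cR{R'}\leq [a_n,b_n,c_n,d_n]$. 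Passing to the supremum over $R'$ via Proposition~\ref{prop:Rm}(\ref{it: open R}) closes the argument.

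Part (2) is dual. Now the hypotheses arrange $\icc{d}{a}\times \icc{b}{c}\subset \ovcirc{R_n}$, and Proposition~\ref{prop:Rm}(\ref{it: mu(int) leq cr leq mu(closure)}) gives $\mu(\icc{d}{a}\times\icc{b}{c})\leq [a_n,b_n,c_n,d_n]$, hence the $\liminf$ bound. For the reverse inequality I fix $R'\in \Rec(X)$ with $\icc{d}{a}\times\icc{b}{c}\subset \ovcirc{R'}$; compactness of $\icc{d}{a}\times\icc{b}{c}$ together with the convergence of $(a_n,b_n,c_n,d_n)$ gives $\ov{R_n}\subset \ovcirc{R'}$ eventually, and monotonicity yields $[a_n,b_n,c_n,d_n]\leq \cR{R'}$. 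Passing to the infimum over $R'$ via Proposition~\ref{prop:Rm}(\ref{it: closed R}) finishes the proof.

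Neither direction contains any substantive obstacle once the cyclic-order bookkeeping is handled carefully. The only real subtlety is to verify, using compactness and the fact that the limit $4$-tuple $(a,b,c,d)$ sits in the interior of the respective arcs determined by $R'$ (resp.\ the interior of the arcs determined by $R_n$), that inclusions such as $\ov{R'}\subset \ovcirc{R_n}$ (resp.\ $\ov{R_n}\subset \ovcirc{R'}$) hold for all sufficiently large~$n$.
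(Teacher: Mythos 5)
Your proof is correct and follows essentially the same route as the paper's: both directions rest on Proposition~\ref{prop:Rm} (the two-sided bound in~(\ref{it: mu(int) leq cr leq mu(closure)}) and the $\sup/\inf$ characterizations~(\ref{it: open R}) and~(\ref{it: closed R})), together with the crossratio monotonicity~\eqref{eq:cr_monotone} and the eventual nesting of rectangles furnished by the convergence of $(a_n,b_n,c_n,d_n)$. The paper only writes out part~(1), remarking that~(2) is similar, whereas you dualize it explicitly; otherwise the arguments coincide.
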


\begin{proof}  We prove the first assertion (the second is similar).
We have by (3)
\begin{equation}\label{e.rect}
	\mu(\ioo{d}{a}\times\ioo{b}{c})=
\sup\left\{[a',b',c',d']\left|\, \begin{array}{l}
a',d'\in\ioo{d}{a},\; \\b',c'\in\ioo{b}{c} 
\text{ and } \\(a',b',c',d')\in X^{[4]}\end{array}\right.\right\}\,,
\end{equation}
Let $(a',b',c',d')\in\calX^{[4]}$ 
 with $a',d'\in\ioo{d}{a}$
and $b',c'\in\ioo{b}{c}$.  For $n$ large enough we have
\bqn
\icc{d'}{a'}\subset\icc{d_n}{a_n}\qquad\text{ and }\qquad\icc{b'}{c'}\subset\icc{b_n}{c_n}\,,
\eqn 
and hence
\bqn
[a',b',c',d']\leq[a_n,b_n,c_n,d_n]\,,
\eqn
which by \eqref{e.rect} implies (1).
\end{proof}


\begin{proof}[Proof of Proposition~\ref{prop:Rm}]  
  We  begin by proving that the conditions are equivalent.
Let $\mu$ be any positive Radon measure on $(\deH)^{(2)}$.

It is clear that 
(\ref{it: mu(int) leq cr leq mu(closure)})
implies (\ref{it: R'<R <R''}).
%

We now prove that 
(\ref{it: R'<R <R''}) implies 
(\ref{it: open R}).
Consider a open rectangle $R$ in $\Rec(\deH)$.
First observe that for every $R'$ 
in $\Rec( X)$ such that $\ov{R'} \subset R$, we have 
by (\ref{it: R'<R <R''}) that $\cR{R'}\leq \mu(R)$, hence
$$ \sup\left\{\cR{R'}:\, 
R'\in \Rec( X)
\text{ and }  \ov{R'} \subset R \right\} \leq \mu(R)\;.$$
By density of $ X$ in $\deH$
we can now take an increasing sequence of rectangles $R_n$ 
in $\Rec( X)$ with union $R$ 
such that $\ov{R_n}\subset \ovcirc{R}_{n+1}$.
We have by (\ref{it: R'<R <R''}) that
 $$\mu(R_{n-1})\leq \cR{R_n}\leq \mu(R_{n+1}) $$
in particular by  $\sigma$-additivity of $\mu$ we have 
$\mu(R)=\lim_n \mu(R_n)=\lim_n \cR{R_n}$.

We now prove  that 
(\ref{it: open R})
implies
(\ref{it: closed R}).
Consider  a proper closed rectangle 
$R$ in $\deH$.
For every $R''$ 
in $\Rec(X)$ such that $R \subset \ovcirc{R}''$, 
there is an open $R'$ in $\Rec(X)$ such that $R\subset R'$ and $\ov{R'}\subset
\ovcirc{R}''$. Then we have  
$\mu(R)\leq \mu(R')\leq \cR{R''}$ by (\ref{it: open R}). 
Hence $$ \mu(R)\leq  \inf\left\{\cR{R''}:\, 
R''\in \Rec( X)
\text{ and } R \subset \ovcirc{R}'' \right\} \;.$$
Let now $R_n$ be a decreasing sequence of open rectangles 
in $\Rec(X)$ with intersection $R$,
such that $\ov{R_{n+1}}\subset R_{n}$.
Then $\mu(R)=\lim_n \mu(R_n)$, and
by (\ref{it: open R}) we have $\cR{R_n}\leq \mu(R_{n-1})$
and 
$$\mu(R_n)=\sup\left\{\cR{R'}:\, R'\in \Rec( X)\text{ and }
  \ov{R'} \subset R_n \right\} \leq \cR{R_n}$$
hence $\mu(R)=\lim_n \cR{R_n}$.

We finally check that (\ref{it: closed R})
implies
(\ref{it: mu(int) leq cr leq mu(closure)}).
Consider  any rectangle 
$R$ in $\Rec( X)$.
As $\cR{R}\leq \cR{R'}$ for all $R'$ containing $R$, 
taking infimum on $R'$ containing $\ov{R}$ in their interior 
we get by (\ref{it: closed R}) that
$\cR{R}\leq \mu(\ov{R})$.
Now write the open rectangle $\ovcirc{R}$ as a  increasing union
$\ovcirc{R}=\cup\uparrow R_n$ of closed rectangles $R_{n}$ in $\Rec( X)$
with 
$R_{n}\subset \ovcirc{R}_{n+1}$. Then by
(\ref{it: closed R}) we have
$\mu(R_n) \leq \cR{R_{n+1}} \leq \cR{R}$.
As $\mu(R_n)\to \mu(\ovcirc{R})$ by $\sigma$-additivity, 
we deduce $\mu(\ovcirc{R})\leq \cR{R}$.


 We now prove the existence of $\mu$ satisfying  (\ref{it: R'<R <R''}).
The strategy of the construction of $\mu$ is to use the finitely
additive function $\cRo$ 
to define the integral of compactly supported continuous functions.
This leads by the Riesz representation theorem to a Radon measure
$\mu$.

A {\em simple function}  is a linear combination $g=\sum_{i=1}^n\alpha_i\chi_{R_i}$ 
of characteristic functions of rectangles $R_i$ in $\Rec( X)$.
Define $E(g)$  by 
\bqn
E(g):=\sum_{i=1}^n\alpha_i\cR{R_i}\,.
\eqn
The additivity property of $\cRo$ on $\Rec( X)$ 
(Proposition \ref{prop: finite add mesure on Rec})
shows that 
$E(g)$ is independent of the representation of $g$ 
as linear combination of characteristic functions of proper rectangles
in $\Rec( X)$.
%
%
It implies that if $g_1$ and $g_2$ are simple functions, then
\bqn
E(g_1+g_2)=E(g_1)+E(g_2)\,.
\eqn
This property also  shows that if $g_1,g_2$ are simple and $g_1\leq g_2$, then $E(g_1)\leq E(g_2)$.
%

If now $f\geq0$ 
is a continuous function on $(\deH)^{(2)}$ with compact support, define
\bqn
I_+(f):=\sup\{E(g):0\leq g\leq f,\,g\text{ is simple}\}
\eqn
and 
\bqn
I_-(f):=\inf\{E(g): f\leq g,\,g\text{ is simple}\} \;.
\eqn
Then by uniform continuity of $f$ and density of $ X$,
we have
$I_-(f)=I_+(f)=:I(f)$.
The additivity of $I$  on positive 
continuous functions with compact
support
then follows from the fact 
that $I_-$ is super-additive, and $I_+$ is subadditive.
Then $I$ extends  to all continuous functions with compact support as a positive linear functional on the space of continuous functions with compact support,
hence corresponds to a Radon measure $\mu$.

We now prove that $\mu$ satisfies (\ref{it: R'<R <R''}).
Let 
$R$, $R'$ be rectangles  
with $\ov{R'}\subset \ovcirc{R}$. 
As there is 
a continuous function $f$ with compact support such that
$\chi_{R'}\leq f\leq \chi_{R}$, 
%
we have 
$\mu(R')\leq I(f)\leq E(\chi_{R})= \cR{R}$
whenever $R\in\Rec( X)$,
and 
$\cR{R'}=E(\chi_{R'})\leq I(f)\leq \mu(R)$
whenever $R'\in\Rec( X)$.

Uniqueness comes  from
(\ref{it: open R}),
as the class of proper open rectangles in $(\deH)^{(2)}$
is stable under finite intersection and generates the Borel
$\sigma$-algebra.
%
%
\end{proof}

\begin{remark}
	It follows from Proposition \ref{prop:Rm} (\ref{it: closed R})
	that also for all pencils $P=\{a\}\times\icc b c$,  
$$\mu(P)=
\inf\left\{ \cR{R'}:\, 
R'\in \Rec( X) \text{ and }  
P \subset \ovcirc{R}' \right\}.$$
\end{remark}

\begin{prop}
\label{prop: ultram cr give lamin}
If a positive crossratio is ultrametric, then
the associated current $\mu$ is of lamination type.
\end{prop}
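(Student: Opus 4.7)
The plan is to verify the fourth characterization in Proposition \ref{prop:4pointCriterionForLamCurr}, which is tailor-made for this setting since it refers only to quadruples in the dense set $X^{[4]}$ where the crossratio is defined. Concretely, I want to show that for every $(a,b,c,d)\in X^{[4]}$ one has
\[
\mu\bigl(\ioo{d}{a}\times \ioo{b}{c}\bigr)\cdot \mu\bigl(\ioo{a}{b}\times \ioo{c}{d}\bigr)=0,
\]
and I will argue by contradiction.

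Suppose both factors are strictly positive. The approach is to use the inner regularity property of $\mu$ on open rectangles, namely Proposition~\ref{prop:Rm}(\ref{it: open R}), to extract two proper rectangles $R_1,R_2\in\Rec(X)$ whose closures sit inside $\ioo{d}{a}\times \ioo{b}{c}$ and $\ioo{a}{b}\times \ioo{c}{d}$ respectively, and with $r(R_1)>0$ and $r(R_2)>0$. Writing the vertices as $(\alpha_i,\beta_i,\gamma_i,\delta_i)$ for $i=1,2$, the containment of the closures forces the cyclic order
\[
d,\,\delta_1,\,\alpha_1,\,a,\,\delta_2,\,\alpha_2,\,b,\,\beta_1,\,\gamma_1,\,c,\,\beta_2,\,\gamma_2
\]
on $\deH$. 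In particular the quadruple $(\alpha_1,\alpha_2,\gamma_1,\gamma_2)$ lies in $X^{[4]}$.

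The key step is then to combine $R_1$ and $R_2$ into a single quadruple violating (CRU) through the monotonicity inequality \eqref{eq:cr_monotone}. Reading off the cyclic order above, one checks that $\ioo{\delta_1}{\alpha_1}\subset \ioo{\gamma_2}{\alpha_1}$ and $\ioo{\beta_1}{\gamma_1}\subset \ioo{\alpha_2}{\gamma_1}$, while simultaneously $\ioo{\delta_2}{\alpha_2}\subset \ioo{\alpha_1}{\alpha_2}$ and $\ioo{\beta_2}{\gamma_2}\subset \ioo{\gamma_1}{\gamma_2}$. Monotonicity therefore yields
\[
0<r(R_1)=[\alpha_1,\beta_1,\gamma_1,\delta_1]\le [\alpha_1,\alpha_2,\gamma_1,\gamma_2]
\]
and
\[
0<r(R_2)=[\alpha_2,\beta_2,\gamma_2,\delta_2]\le [\alpha_2,\gamma_1,\gamma_2,\alpha_1].
\]
Since $(\alpha_1,\alpha_2,\gamma_1,\gamma_2)\in X^{[4]}$, the product of these two crossratios is forced to be zero by (CRU), which is the desired contradiction.

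The only non-routine point is bookkeeping of the cyclic order: one must be careful that the combined quadruple $(\alpha_1,\alpha_2,\gamma_1,\gamma_2)$ is the one produced by \emph{interlacing} the vertices of $R_1$ and $R_2$, because this is precisely the configuration in which the two arc-inclusions needed for monotonicity hold simultaneously for \emph{both} crossratios appearing in (CRU). Everything else is a direct application of Proposition~\ref{prop:Rm}, the monotonicity of positive crossratios, and the equivalence (\ref{it: supp=lamin})$\Leftrightarrow$(\ref{it: oo times oo in dense X}) of Proposition~\ref{prop:4pointCriterionForLamCurr}.
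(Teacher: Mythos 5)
Your proof is correct, but it is considerably more elaborate than necessary, and it is worth seeing why. You begin by appealing to the \emph{inner regularity} characterization of $\mu$, Proposition~\ref{prop:Rm}(\ref{it: open R}), extracting auxiliary rectangles $R_1,R_2\in\Rec(X)$ strictly inside the two open rectangles, and then patching them into a single ``interlaced'' quadruple $(\alpha_1,\alpha_2,\gamma_1,\gamma_2)\in X^{[4]}$ on which (CRU) fails. The cyclic-order bookkeeping, the two applications of the monotonicity inequality \eqref{eq:cr_monotone}, and the final contradiction are all carried out correctly.

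The paper's proof is much shorter because it notices that no auxiliary rectangles are needed. Since $(a,b,c,d)$ is \emph{already} in $X^{[4]}$, the rectangle with those vertices lies in $\Rec(X)$, and Proposition~\ref{prop:Rm}(\ref{it: mu(int) leq cr leq mu(closure)}) applied directly gives the one-sided bound
\[
\mu\bigl(\ioo{d}{a}\times\ioo{b}{c}\bigr)\le[a,b,c,d].
\]
Applying the same inequality to the rotated quadruple $(b,c,d,a)\in X^{[4]}$ gives $\mu\bigl(\ioo{a}{b}\times\ioo{c}{d}\bigr)\le[b,c,d,a]$, and (CRU) then kills one of the two factors on the spot --- no contradiction argument, no inner regularity, no monotonicity needed. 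The lesson is that the $\mu(\ovcirc R)\le r(R)$ bound in part~(\ref{it: mu(int) leq cr leq mu(closure)}) of Proposition~\ref{prop:Rm} is precisely what item~(\ref{it: oo times oo in dense X}) of Proposition~\ref{prop:4pointCriterionForLamCurr} is set up to receive; reaching instead for part~(\ref{it: open R}) reintroduces an approximation step that the statement of~(\ref{it: mu(int) leq cr leq mu(closure)}) has already absorbed. Your argument does, however, illustrate independently that the crossratio $[\alpha_1,\alpha_2,\gamma_1,\gamma_2]$ dominates $[\alpha_1,\beta_1,\gamma_1,\delta_1]$ whenever the rectangles are nested in the interlaced pattern --- a small observation of its own modest interest, but not one this proposition requires.
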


\begin{proof}
By Proposition \ref{prop:4pointCriterionForLamCurr}  it is enough to
prove that, for all $(a,b,c,d)\in { X}^{[4]}$,
 $\mu(\ioo{d}{a}\times \ioo{b}{c}) \cdot \mu(\ioo{a}{b}\times \ioo{c}{d})=0$.

Let $(a,b,c,d)$ in  ${ X}^{[4]}$.
As the crossratio is ultrametric, we have
either $[a,b,c,d]=0$ or $[b,c,d,a]=0$.
As  $\mu(\ioo{d}{a}\times\ioo{b}{c})\leq [a,b,c,d]$ 
and $\mu(\ioo{a}{b}\times\ioo{c}{d})\leq [b,c,d,a]$
(see Proposition \ref{prop:Rm}(1)),
this implies that $\mu(\ioo{d}{a}\times\ioo{b}{c})=0$ or
$\mu(\ioo{a}{b}\times\ioo{c}{d})=0$.
\end{proof}

\subsection{Periods and intersections}\label{s.3.3}
We now turn to the problem of identifying the periods of a positive crossratio
with the intersections of the corresponding current.
Let then $[\,\cdot\,,\,\cdot\,,\,\cdot\,,\,\cdot\,]$ be a positive crossratio defined on $ X$,
and let $\gamma\in\Gamma$  be hyperbolic. 
Recall from \S\ref{ssec:periods} the definition of the period of $\g$ with respect to the crossratio $[\,\cdot\,,\,\cdot\,,\,\cdot\,,\,\cdot\,]$.

In the following proposition we will use the well known fact that if $\gamma$ is a hyperbolic element representing a closed geodesic $c\subset \Sigma$,  and $\mu$ is a geodesic current, since $\ioo{\gamma_+}{\gamma_-}\times\ioc{x}{\gamma x}$ is a Borel fundamental domain for the $\langle\gamma\rangle$-action on $\ioo{\gamma_+}{\gamma_-}\times\ioo{\gamma_-}{\gamma_+}$,
the intersection $i(\mu,\delta_c)$ can be computed as
\bqn
i(\mu,\delta_c)=\mu(\ioo{\gamma_+}{\gamma_-}\times\ioc{x}{\gamma x})\,.
\eqn

\begin{prop}\label{prop:new4.10} Let $\mu$ be the geodesic current associated to a positive crossratio $[\,\cdot\,,\,\cdot\,,\,\cdot\,,\,\cdot\,]$
and let $c$ be a closed geodesic represented by an hyperbolic element $\gamma\in\Gamma$. 
Then
\bqn
\per(\gamma)=i(\mu,\delta_c)\,.
\eqn
\end{prop}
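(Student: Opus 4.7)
The plan is to compute both $\per(\gamma)$ and $i(\mu,\delta_c)$ as $\mu$-measures of explicit sets in $(\deH)^{(2)}$, and to equate them by exploiting multiplicativity of periods under powers of $\gamma$. Set $A:=\mu(\ioo{\gamma_+}{\gamma_-}\times\ioo{x}{\gamma x})$ and $\alpha:=\mu(\ioo{\gamma_+}{\gamma_-}\times\{\gamma x\})$. By $\mu$-invariance under $\gamma^{-j}$ (which fixes $\gamma_\pm$) one has $\mu(\ioo{\gamma_+}{\gamma_-}\times\ioo{\gamma^jx}{\gamma^{j+1}x})=A$ and $\mu(\ioo{\gamma_+}{\gamma_-}\times\{\gamma^jx\})=\alpha$ for every $j\in\ZZ$; both values are finite since $\icc{\gamma_+}{\gamma_-}\times\icc{x}{\gamma^kx}$ is a compact subset of $(\deH)^{(2)}$ for every $k$. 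The Borel fundamental domain description recalled just before the statement therefore gives $i(\mu,\delta_c)=A+\alpha$.

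I first verify that $\per(\gamma^k)=k\,\per(\gamma)$ for every $k\geq 1$. Choose monotone sequences $(x_n'),(y_n')\subset X\cap\ioo{\gamma_+}{\gamma_-}$ with $x_n'\to\gamma_-$ and $y_n'\to\gamma_+$; by Proposition~\ref{p.welldef} and Definition~\ref{d.period2} one has $\per(\gamma^k)=\lim_n[x_n',x,\gamma^kx,y_n']$. Iterating (CR3) on the positively oriented tuple $(x_n',x,\gamma x,\ldots,\gamma^kx,y_n')$ and then applying (CR1) termwise yields
$$[x_n',x,\gamma^kx,y_n']=\sum_{j=0}^{k-1}[x_n',\gamma^jx,\gamma^{j+1}x,y_n']=\sum_{j=0}^{k-1}[\gamma^{-j}x_n',x,\gamma x,\gamma^{-j}y_n'],$$
and each summand tends to $\per(\gamma)$ by Proposition~\ref{p.welldef}, since $\gamma^{-j}$ preserves $X$, fixes $\gamma_\pm$ and acts monotonically on $\ioo{\gamma_+}{\gamma_-}$.

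I then apply Proposition~\ref{prop:Rm}(1) to the rectangle $R_n^{(k)}\in\Rec(X)$ with vertices $(x_n',x,\gamma^kx,y_n')$, obtaining
$$\mu\bigl(\ioo{y_n'}{x_n'}\times\ioo{x}{\gamma^kx}\bigr)\leq[x_n',x,\gamma^kx,y_n']\leq\mu\bigl(\icc{y_n'}{x_n'}\times\icc{x}{\gamma^kx}\bigr).$$
Both $\ioo{y_n'}{x_n'}$ and $\icc{y_n'}{x_n'}$ increase to $\ioo{\gamma_+}{\gamma_-}$, so continuity from below of the Radon measure $\mu$, combined with the previous step, yields in the limit
$$\mu(\ioo{\gamma_+}{\gamma_-}\times\ioo{x}{\gamma^kx})\leq k\,\per(\gamma)\leq\mu(\ioo{\gamma_+}{\gamma_-}\times\icc{x}{\gamma^kx}).$$
Tiling $\ioo{x}{\gamma^kx}$ as $\bigsqcup_{j=0}^{k-1}\ioo{\gamma^jx}{\gamma^{j+1}x}\sqcup\{\gamma^jx:1\le j\le k-1\}$ and inserting the values of $A$ and $\alpha$, this sandwich becomes $kA+(k-1)\alpha\leq k\,\per(\gamma)\leq kA+(k+1)\alpha$; dividing by $k$ and letting $k\to\infty$ gives $\per(\gamma)=A+\alpha=i(\mu,\delta_c)$. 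The substantive obstacle is precisely the possible atomic mass on the pencils $\ioo{\gamma_+}{\gamma_-}\times\{x,\gamma x\}$, which prevents the $k=1$ sandwich from pinning down $\per(\gamma)$; the averaging as $k\to\infty$ is what selects the correct intermediate value $A+\alpha$.
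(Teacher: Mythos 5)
Your proof is correct, and it takes a genuinely different route to the same conclusion. Both arguments rest on the same two ingredients — Proposition~\ref{prop:Rm}(1) for the sandwich $\mu(\ovcirc R)\leq\cR{R}\leq\mu(\ov{R})$ and Proposition~\ref{p.welldef} for computing the period as a limit over sequences in $X$ approaching $\gamma_\pm$ — but they diverge in how they dispose of the possible atomic mass $\alpha=\mu(\ioo{\gamma_+}{\gamma_-}\times\{x\})$ on the pencils through the chosen base point. The paper fixes the power $k=1$, obtains the two-sided bound $|i(\mu,\delta_c)-\per(\gamma)|\leq\mu(\ioo{\gamma_+}{\gamma_-}\times\{x\})$ for every admissible $x$ (using Lemma~\ref{lem:new4.9} to pass from $\ioo{\gamma_+}{\gamma_-}$ to $\icc{\gamma_+}{\gamma_-}$ on the upper-bound side), and then exploits $\sigma$-additivity of $\mu$ on the pencil $\ioo{\gamma_+}{\gamma_-}\times\icc{a}{b}$ to find a sequence of base points $x_n$ along which this error tends to zero. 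You instead fix $x$ and vary the power $k$: the identity $\per(\gamma^k)=k\,\per(\gamma)$ — which you correctly derive from (CR1), (CR3) and Proposition~\ref{p.welldef} — converts the atomic error into a boundary effect of size $O(\alpha)$ in a sum of size $k$, so it disappears after division by $k$. Your approach has the minor advantage of avoiding Lemma~\ref{lem:new4.9} entirely: because $x_n',y_n'$ are taken inside $\ioo{\gamma_+}{\gamma_-}$, both the open and the closed rectangles $R_n^{(k)}$ increase to $\ioo{\gamma_+}{\gamma_-}\times\ioo{x}{\gamma^kx}$, respectively $\ioo{\gamma_+}{\gamma_-}\times\icc{x}{\gamma^kx}$, so continuity from below suffices and the fixed-point pencils $\{\gamma_\pm\}\times\icc{x}{\gamma^kx}$ never enter. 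The paper's argument has the offsetting advantage of producing the sharper intermediate statement $|i(\mu,\delta_c)-\per(\gamma)|\leq\mu(\ioo{\gamma_+}{\gamma_-}\times\{x\})$ for \emph{each} $x$, which localizes where the equality could fail if $\mu$ were only finitely additive. Both proofs ultimately rely on $\sigma$-additivity of $\mu$ — the paper explicitly via the summability of the atomic masses, you implicitly via continuity from below of the Radon measure in the limit $n\to\infty$.
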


\begin{proof}  In the notation of Proposition~\ref{p.welldef} (see also Figure~\ref{f.periods}) we have that 
\bq\label{eq:4.8.1}
\per(\gamma)=\lim_{n\to\infty}[x_n,x,\gamma x, y_n]
\eq
and also 
\bq\label{eq:4.8.2}
\per(\gamma)=\lim_{n\to\infty}[x'_n,x,\gamma x, y'_n]\,,
\eq
where $x\in\ioo{\gamma_-}{\gamma_+}\cap X$ is arbitrary.

For any $x\in\ioo{\gamma_-}{\gamma_+}\cap X$ and $n\in\bN$    we have
\bqn
\ba
i(\mu,\delta_c)
&=\mu(\ioo{\gamma_+}{\gamma_-}\times\ioc{x}{\gamma x})\\
&=\mu(\ioo{\gamma_+}{\gamma_-}\times\ioo{x}{\gamma x})+\mu(\ioo{\gamma_+}{\gamma_-}\times\{\gamma x\})\\
&\leq[x_n,x,\gamma x,y_n]+\mu(\ioo{\gamma_+}{\gamma_-}\times\{x\})\,,
\ea
\eqn
where the inequality follows from
Proposition~\ref{prop:Rm} (1).  By \eqref{eq:4.8.1} this implies that
\bqn
i(\mu,\delta_c)\leq\per(\gamma)+\mu(\ioo{\gamma_-}{\gamma_+}\times\{x\})\,.
\eqn

Next we have:
\bqn
i(\mu,\delta_c)
=\mu(\ioo{\gamma_+}{\gamma_-}\times\ioc{x}{\gamma x})
=\mu(\ioo{\gamma_+}{\gamma_-}\times\icc{x}{\gamma x})-\mu(\ioo{\gamma_+}{\gamma_-}\times\{x\})\,.
\eqn
Using Lemma~\ref{lem:new4.9} below this equals
\bqn
\mu(\icc{\gamma_+}{\gamma_-}\times\icc{x}{\gamma x})-\mu(\ioo{\gamma_+}{\gamma_-}\times\{x\})\,,
\eqn
which, again by Proposition~\ref{prop:Rm} (1), implies
\bqn
\ba
i(\mu,\delta_c)
=&\mu(\icc{\gamma_+}{\gamma_-}\times\icc{x}{\gamma x})-\mu(\ioo{\gamma_+}{\gamma_-}\times\{x\})\\
\geq&[x'_n,x,\gamma x,y'_n]-\mu(\ioo{\gamma_+}{\gamma_-}\times\{x\})\,.
\ea
\eqn
Then it follows from  \eqref{eq:4.8.2} that
\bqn
i(\mu,\delta_c)\geq\per(\gamma)-\mu(\ioo{\gamma_-}{\gamma_+}\times\{x\})
\eqn
and thus
\bqn
|i(\mu,\delta_c)-\per(\gamma)|\leq\mu(\ioo{\gamma_+}{\gamma_-}\times\{x\})
\eqn
for any $x\in\ioo{\gamma_-}{\gamma_+}\cap X$.  

Now fix a closed interval $\icc{a}{b}\subset\ioo{\gamma_-}{\gamma_+}$
with non-empty interior.  Then
\bqn
\sum_{x\in X\cap[a,b]}\mu(\ioo{\gamma_+}{\gamma_-}\times\{x\})
\leq\mu(\ioo{\gamma_+}{\gamma_-}\times\icc{a}{b})
<\infty
\eqn
and since $ X\cap[a,b]$ is infinite, this implies the existence 
of a sequence $(x_n)_{n\geq1}$ in $ X\cap[a,b]$ with
\bqn
\lim_{n\to\infty}\mu(\ioo{\gamma_+}{\gamma_-}\times\{x_n\})=0\,,
\eqn
which implies that $i(\mu,\delta_c)=\per(\gamma)$.
\end{proof}


\subsection{The current depends continuously on the crossratio}\label{s.3.4}

The vector space $\calCR( X)$ of crossratios on $ X$
is a topological vector space
for the topology of pointwise convergence and the space $\calCR^+( X)$
of positive crossratios is a closed convex cone in it.
We observe moreover that the map
\bqn
\ba
\calCR^+( X)&\longrightarrow\Curr(\Sigma)\\
[\,\cdot\,,\,\cdot\,,\,\cdot\,,\,\cdot\,]&\mapsto\mu_{[\,\cdot\,,\,\cdot\,,\,\cdot\,,\,\cdot\,]}
\ea
\eqn
from positive crossratios to the space $\Curr(\Sigma)$ of geodesic currents is surjective.  
In fact, if $\mu$ is a geodesic current,  one verifies using the regularity of $\mu$ that
$\mu_{[\,\cdot\,,\,\cdot\,,\,\cdot\,,\,\cdot\,]_{\mu}^+}=\mu$ for the crossratio
$[\,\cdot\,,\,\cdot\,,\,\cdot\,,\,\cdot\,]_{\mu}^+$
of Example \ref{ex:crmu}. 

{Let $\hg$ denote the subset of $\deH$ consisting of the fixed points of hyperbolic elements in $\Gamma$.
	Recall that if $\gamma\in\Gamma$ is hyperbolic we let $\gamma_+$ and $\gamma_-$ denote respectively 
	the attractive and the repulsive fixed point of $\gamma$.
	For every $a\in\hg$, we choose $\gamma$ such that $a=\gamma_-$ and we denote by $\overline{a}$ the point $\gamma_+$.
	
	The following simple lemma is crucial:
	
	\begin{lem}[{\cite[Proposition 8.2.8]{Martelli}}]\label{lem:new4.9}  Let $\mu$ be a geodesic current, $a\in\hg$ 
		and $I\subset\deH$ be a closed interval with $\{a,\overline{a}\}\cap I=\varnothing$.
		Then $\mu(\{a\}\times I)=0$.
	\end{lem}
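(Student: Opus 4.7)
The plan is to exploit the dynamics of the hyperbolic element whose repelling fixed point is $a$, together with the $\Gamma$-invariance and Radon property of $\mu$, to reduce the statement to a summation argument.

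First, since $a \in \hg$, by definition there is a hyperbolic $\gamma\in\Gamma$ with $\gamma_-=a$ and $\gamma_+=\bar a$, so $\gamma$ fixes both $a$ and $\bar a$ and hence $\mu$ is invariant under $(a,y)\mapsto(a,\gamma y)$. Since $I$ is closed and avoids $\{a,\bar a\}$, it lies in one of the two open arcs $\deH\setminus\{a,\bar a\}$, and without loss of generality $I\subset \ioo{a}{\bar a}$ (otherwise replace $\gamma$ by $\gamma^{-1}$). Because $\bar a$ is attracting and $a$ is repelling for the action of $\gamma$ on $\deH$, the iterates $\gamma^n I$ converge Hausdorff to $\{\bar a\}$ as $n\to+\infty$ and to $\{a\}$ as $n\to-\infty$. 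In particular, there exists $N\geq 1$ such that $\gamma^n I\cap I=\varnothing$ for all $|n|\geq N$, which implies that the family $\{\gamma^{kN}I\}_{k\in\mathbb{Z}}$ is pairwise disjoint.

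Next I would keep only the translates with $k\geq 0$. Since $\gamma^{kN}$ for $k\geq 0$ moves points of $\ioo{a}{\bar a}$ toward $\bar a$, all of these translates are contained in the closed interval $J:=\icc{c}{\bar a}\subset\deH\setminus\{a\}$, where $c$ is the endpoint of $I$ nearer to $a$. Because $\deH$ is compact and $a\notin J$, the pencil $\{a\}\times J$ is a compact subset of $(\deH)^{(2)}$, so $\mu(\{a\}\times J)<\infty$ as $\mu$ is a Radon measure.

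Finally, since $\mu$ is $\gamma$-invariant and $\gamma a=a$, we have $\mu(\{a\}\times\gamma^{kN}I)=\mu(\{a\}\times I)$ for every $k$. Combining disjointness with the containment in $\{a\}\times J$ yields
\[
\sum_{k\geq 0}\mu(\{a\}\times I)=\mu\Bigl(\{a\}\times\bigsqcup_{k\geq 0}\gamma^{kN}I\Bigr)\leq \mu(\{a\}\times J)<\infty,
\]
which forces $\mu(\{a\}\times I)=0$.

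The main subtlety lies in step two of this outline, namely arranging for the disjoint translates to sit inside a set of finite $\mu$-measure. A naive attempt using translates $\gamma^{-kN}I$ converging to $a$ would produce a union that accumulates on the diagonal of $(\deH)^{(2)}$, hence no longer sits in a compact set, and the Radon property gives no bound. Using the translates converging to the \emph{attracting} fixed point $\bar a$ is precisely what ensures the union stays in a compact pencil $\{a\}\times J$, so that $\sigma$-additivity closes the argument.
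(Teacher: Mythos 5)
Your argument is correct in spirit and follows the standard proof; the paper itself offers no argument for this lemma but simply cites Martelli, Proposition~8.2.8. The key steps---pick $\gamma$ hyperbolic with $\gamma_-=a$, observe that high $\gamma$-powers send $I$ off itself so a subfamily of translates of $\{a\}\times I$ is pairwise disjoint and contained in a compact pencil $\{a\}\times J$ with $a\notin J$, then invoke $\gamma$-invariance and $\sigma$-additivity of the Radon measure---are exactly the expected ones, and you are right that the essential point is to push toward the attracting fixed point $\overline{a}$ so the union of translates stays compact inside $(\deH)^{(2)}$.

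There is, however, one small slip in the reduction. The statement ``without loss of generality $I\subset\ioo{a}{\overline{a}}$, otherwise replace $\gamma$ by $\gamma^{-1}$'' is not a valid normalization: passing to $\gamma^{-1}$ leaves the two complementary arcs of $\{a,\overline{a}\}$ unchanged, but it turns $a$ into the \emph{attracting} fixed point, which is the opposite of what your construction requires (forward translates would then accumulate at $a$ and the pencil would not be compact). Fortunately the reduction is also unnecessary: positive powers of $\gamma$ (with $a$ repelling) move points of \emph{both} arcs $\ioo{a}{\overline{a}}$ and $\ioo{\overline{a}}{a}$ toward $\overline{a}$. If $I\subset\ioo{\overline{a}}{a}$, simply take $J$ to be the closed subarc from $\overline{a}$ to the endpoint of $I$ nearer to $a$; this $J$ still omits $a$, contains every forward translate $\gamma^{kN}I$ for $k\ge0$, and the same finiteness-plus-$\sigma$-additivity argument closes. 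With that correction your proof is complete and essentially identical to the referenced one.
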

	%
	%
	%
}
We call a $\Gamma$-invariant subset $S_\Gamma\subset\hg$ {\em symmetric}
if $\overline\xi\in S_\Gamma$ whenever $\xi\in S_\Gamma$.

\begin{prop}\label{prop:new4.11} Let $S_\Gamma\subset\hg$ be a $\Gamma$-invariant symmetric subset 
such that $\{(\xi,\overline{\xi}):\xi\in S_\Gamma\}$ is dense in $(\deH)^2$.  Then the map
\bqn
\ba
\calCR^+(S_\Gamma)\,&\longrightarrow\Curr(\Sigma)\\
[\,\cdot\,,\,\cdot\,,\,\cdot\,,\,\cdot\,]&\mapsto\mu_{[\,\cdot\,,\,\cdot\,,\,\cdot\,,\,\cdot\,]}
\ea
\eqn
is continuous.
\end{prop}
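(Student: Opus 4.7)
The plan is to exploit the construction of the current $\mu_{[\cdot,\cdot,\cdot,\cdot]}$ carried out in the proof of Proposition \ref{prop:Rm}, where $\mu$ is recovered, via the Riesz representation theorem, from the positive linear functional $I$ on $C_c((\deH)^{(2)})$ obtained by sandwiching continuous compactly supported functions between values of $E$ on simple functions. Here $E(g) = \sum_i \alpha_i\, \cR{R_i} = \sum_i \alpha_i\, [a_i,b_i,c_i,d_i]$ whenever $g = \sum_i \alpha_i \chi_{R_i}$ with $R_i \in \Rec(S_\Gamma)$ of vertices $(a_i,b_i,c_i,d_i) \in S_\Gamma^{[4]}$. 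Crucially, $E(g)$ is a finite linear combination of crossratio values, hence depends continuously on the crossratio in the topology of pointwise convergence on $S_\Gamma^{[4]}$.

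Suppose $[\cdot,\cdot,\cdot,\cdot]_k \to [\cdot,\cdot,\cdot,\cdot]$ pointwise on $S_\Gamma^{[4]}$, with associated functionals $E_k, I_k$ and currents $\mu_k$; by the preceding observation, $E_k(g) \to E(g)$ for every fixed simple function $g$ as above. To show $\mu_k \to \mu$ in $\Curr(\Sigma)$, it suffices to prove $\int f \, d\mu_k \to \int f \, d\mu$ for every $f \in C_c((\deH)^{(2)})$ with $f \geq 0$, the general case following by writing $f = f_+ - f_-$. For such $f$ and any simple $g$ with $0 \leq g \leq f$, the inequality $E_k(g) \leq I_k(f)$ gives
\[
\liminf_{k\to\infty} \int f \, d\mu_k \;\geq\; \lim_{k\to\infty} E_k(g) \;=\; E(g),
\]
and taking the supremum over such $g$ yields $\liminf_k \int f \, d\mu_k \geq I_+(f) = I(f) = \int f \, d\mu$. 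Dually, $I_k(f) \leq E_k(g')$ for every simple $g' \geq f$; passing to the limit and then taking the infimum over $g'$ gives $\limsup_k \int f \, d\mu_k \leq I_-(f) = I(f)$, and the two inequalities combine to yield the required convergence.

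The main potential obstacle is confirming that the sandwich identity $I_{k,+}(f) = I_{k,-}(f) = I_k(f)$ holds for each $k$; this is granted by the proof of Proposition \ref{prop:Rm} and uses density of $S_\Gamma$ in $\deH$ (implied by density of $\{(\xi, \bar\xi) : \xi \in S_\Gamma\}$ in $(\deH)^2$) together with uniform continuity of $f$. Once this is in place, the continuity of $[\cdot,\cdot,\cdot,\cdot] \mapsto \mu_{[\cdot,\cdot,\cdot,\cdot]}$ is essentially built into the Riesz-based construction, and no finer properties of the limit current, such as regularity or behavior on pencils, are required for the argument.
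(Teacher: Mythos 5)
Your proposal is correct, and it takes a genuinely different route from the paper's proof. The paper proves Proposition~\ref{prop:new4.11} by introducing the auxiliary notion of a ``good quadruple'' $(a,b,c,d)\in S_\Gamma^{[4]}$ (one for which $\{\overline a,\overline d\}\cap\icc bc=\varnothing$ and $\{\overline b,\overline c\}\cap\icc da=\varnothing$), proving via Lemma~\ref{lem:new4.9} and Lemma~\ref{lem:new4.12} that for such quadruples $\mu(\ioo da\times\ioo bc)=\mu(\icc da\times\icc bc)=[a,b,c,d]$, and then covering $\supp(f)$ by small rectangles whose vertices form good quadruples so that $\mu_n(R_i)$ and $\mu(R_i)$ are computed \emph{exactly} as crossratio values, after which a Riemann-sum estimate finishes the argument. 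Your proof sidesteps all of this: you work directly with the functional description $\int f\,d\mu=I_+(f)=I_-(f)$ from the proof of Proposition~\ref{prop:Rm}, note that $E_k(g)\to E(g)$ for each fixed simple function $g$ by pointwise convergence of the crossratios, and conclude by a two-sided $\liminf/\limsup$ sandwich: $\liminf_k I_k(f)\geq\sup_{0\le g\le f}E(g)=I_+(f)$ and $\limsup_k I_k(f)\leq\inf_{g'\ge f}E(g')=I_-(f)$. This is both shorter and cleaner.

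There is one further point worth noting. Your argument only uses density of $S_\Gamma$ in $\deH$ (which, since $\Gamma$ is a lattice, is automatic for any non-empty $\Gamma$-invariant subset); it never invokes the symmetry of $S_\Gamma$ or the density of $\{(\xi,\overline\xi):\xi\in S_\Gamma\}$ in $(\deH)^2$. The paper's good-quadruple mechanism is exactly where those stronger hypotheses enter, since one needs $\overline a,\overline b,\overline c,\overline d$ to be available and positionable. So your proof in fact establishes continuity on $\calCR^+(X)$ for any non-empty $\Gamma$-invariant $X\subset\deH$, strengthening the statement. Also note that nothing in your argument requires the index set to be countable, so you get genuine continuity (via nets), not merely sequential continuity.

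One small thing to make explicit: you should verify that for nonnegative $f\in C_c$ there do exist simple functions $g'\ge f$ built from rectangles in $\Rec(S_\Gamma)$, so that the infimum defining $I_{k,-}(f)$ is taken over a non-empty set; this follows from compactness of $\supp(f)$, density of $S_\Gamma$, and the fact that simple functions may take constant values on finitely many proper rectangles covering $\supp(f)$. This is implicit in the paper's proof of Proposition~\ref{prop:Rm} but deserves a line in your write-up.
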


Observe that Proposition \ref{prop:new4.11} applies to $S_\Gamma=\hg$ in particular.
In the proof of Proposition  \ref{prop:new4.11} we will focus on a special subset of $S_\Gamma^{[4]}$: we say that
a quadruple $(a,b,c,d)\in S_\Gamma^{[4]}$ is {\em good} 
if $\{\overline{a},\overline{d}\}\cap \icc{b}{c}=\varnothing$ and 
$\{\overline{b},\overline{c}\}\cap \icc{d}{a}=\varnothing$.

\begin{lem}\label{lem:new4.12}  Let $[\,\cdot\,,\,\cdot\,,\,\cdot\,,\,\cdot\,]\in\calCR^+(S_\Gamma)$
and let $\mu$ be the associated current.  For every good quadruple $(a,b,c,d)\in S_\Gamma^{[4]}$ we have
\bqn
\mu(\ioo{d}{a}\times\ioo{b}{c})
=\mu(\icc{d}{a}\times\icc{b}{c})
=[a,b,c,d]\,.
\eqn
\end{lem}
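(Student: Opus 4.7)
The plan is to exploit the fact that the closed rectangle and the open rectangle differ only by their four ``sides'', and to show that each of these sides has $\mu$-measure zero using Lemma~\ref{lem:new4.9} together with the goodness hypothesis. Once this is established, Proposition~\ref{prop:Rm}(1), which sandwiches $[a,b,c,d]$ between the measures of the open and closed rectangles with vertices $(a,b,c,d)$, will yield both equalities at once.

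For the vanishing of the ``sides'', I would decompose
\bqn
(\icc{d}{a}\times\icc{b}{c}) \setminus (\ioo{d}{a}\times\ioo{b}{c}) = (\{a,d\}\times\icc{b}{c}) \cup (\icc{d}{a}\times\{b,c\})
\eqn
and treat the four pieces separately. For $\{a\}\times\icc{b}{c}$, observe that $a\in S_\Gamma\subset\hg$, that $a\notin\icc{b}{c}$ since $(a,b,c,d)$ is positively oriented, and that $\overline{a}\notin\icc{b}{c}$ \emph{precisely} by the goodness assumption. Lemma~\ref{lem:new4.9} then gives $\mu(\{a\}\times\icc{b}{c})=0$. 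The piece $\{d\}\times\icc{b}{c}$ is handled identically using $\overline{d}\notin\icc{b}{c}$. For the two ``horizontal'' pieces $\icc{d}{a}\times\{b\}$ and $\icc{d}{a}\times\{c\}$, I would invoke the flip-invariance of the geodesic current $\mu$ to rewrite $\mu(\icc{d}{a}\times\{b\}) = \mu(\{b\}\times\icc{d}{a})$, and then apply Lemma~\ref{lem:new4.9} again, this time using $\overline{b}\notin\icc{d}{a}$ (respectively $\overline{c}\notin\icc{d}{a}$) from goodness. Additivity of $\mu$ then gives the first equality $\mu(\icc{d}{a}\times\icc{b}{c}) = \mu(\ioo{d}{a}\times\ioo{b}{c})$.

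For the second equality, I would choose any rectangle $R\in\Rec(S_\Gamma)$ with vertices $(a,b,c,d)$; by definition $\cR{R}=[a,b,c,d]$, and Proposition~\ref{prop:Rm}(1) yields
\bqn
\mu(\ioo{d}{a}\times\ioo{b}{c}) \;\leq\; [a,b,c,d] \;\leq\; \mu(\icc{d}{a}\times\icc{b}{c}).
\eqn
Since the outer terms coincide by the previous paragraph, equality holds throughout. The only real point of care is conceptual rather than computational: one must recognize that the goodness hypothesis is tailored exactly so that each of the four boundary pencils falls under the hypothesis of Lemma~\ref{lem:new4.9}, the two ``vertical'' sides directly and the two ``horizontal'' sides after a flip. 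Once this bookkeeping is in place, the conclusion is immediate.
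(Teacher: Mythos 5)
Your proof is correct and takes essentially the same approach as the paper, which likewise deduces the result from Proposition~\ref{prop:Rm}(1) together with Lemma~\ref{lem:new4.9} applied to the four boundary pencils $\{d\}\times\icc{b}{c}$, $\{a\}\times\icc{b}{c}$, $\icc{d}{a}\times\{b\}$ and $\icc{d}{a}\times\{c\}$. Your explicit remark about needing flip-invariance for the two horizontal pencils is a useful clarification of a step the paper leaves implicit.
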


\begin{proof}  This follows immediately from the inequalities in
  Proposition \ref{prop:Rm} (\ref{it: mu(int) leq cr leq mu(closure)})
and Lemma~\ref{lem:new4.9} applied to $\{d\}\times\icc{b}{c}$, 
$\{a\}\times\icc{b}{c}$, $\icc{d}{a}\times\{b\}$ and $\icc{d}{a}\times\{c\}$.
\end{proof}

\begin{proof}[Proof of Proposition~\ref{prop:new4.11}]  Let $[\,\cdot\,,\,\cdot\,,\,\cdot\,,\,\cdot\,]_n$ and $[\,\cdot\,,\,\cdot\,,\,\cdot\,,\,\cdot\,]$
be positive crossratios on $S_\Gamma$ and $\mu_n$, $\mu$ the associated geodesic currents.  
Assume that 
\bqn
\lim_n[\,\cdot\,,\,\cdot\,,\,\cdot\,,\,\cdot\,]_n=[\,\cdot\,,\,\cdot\,,\,\cdot\,,\,\cdot\,]\,.
\eqn
We have to show that for every positive continuous function $f$ on $(\deH)^{(2)}$ with compact support,
$\lim_{n\to\infty}\mu_n(f)=\mu(f)$.
Using a finite partition of unity we may assume that the support $\supp(f)$ is contained in some rectangle
$\icc{d}{a}\times\icc{b}{c}$ with $(a,b,c,d)$ positive.

\vskip.1cm
\noindent
\begin{minipage}{.65\textwidth}
Fix an interval $\icc{a_0}{b_0}\subset\ioo{a}{b}$ and observe that
for every open set $J\subset\ioo{b_0}{a_0}$ the set
\bqn
S_J:=\{\xi\in J\cap S_\Gamma:\,\overline\xi\in\icc{a_0}{b_0}\}
\eqn
is dense in $J$.  Choose then $a'\in S_{(a,a_0)}$, $b'\in S_{(b_0,b)}$,
$\{c',d'\}\subset S_{(c,d)}$ with $(a',b',c',d')$ positive 
and observe that the quadruple $(a',b',c',d')$ is good.
\end{minipage}
\begin{minipage}{.35\textwidth}
\begin{center}
\begin{tikzpicture}[scale=.65]
\draw (0,0) circle [radius=2cm];
\filldraw[blue] (0,2) circle [radius=1pt] node[above, black] {$a$};
\filldraw (-.68,1.88) circle [radius=1pt] node[above] {$a'$};
\filldraw (-1.28,1.53) circle [radius=1pt] node[above left] {$a_0$};
\filldraw (-2,0) circle [radius=1pt] node[left] {$b_0$};
\filldraw (-1.73,-1) circle [radius=1pt] node[left] {$b'$};
\filldraw[blue]  (-1,-1.73) circle [radius=1pt] node[below, black] {$b$};
\filldraw[blue]  (1.28,-1.53) circle [radius=1pt] node[below right, black] {$c$};
\filldraw (2,0) circle [radius=1pt] node[right] {$c'$};
\filldraw (1.53,1.28) circle [radius=1pt] node[above right] {$d'$};
\filldraw[blue]  (1.28,1.53) circle [radius=1pt] node[above right, black] {$d$};
\draw[blue, very thick] (1.28,1.53) arc[start angle=50, end angle=90, radius=2];
\draw[blue, very thick] (1.28,-1.53) arc[start angle=310, end angle=240, radius=2];
\end{tikzpicture}
\end{center}
\end{minipage}

Fix some distance inducing the topology on $(\deH)^{(2)}$, fix $\epsilon>0$ and let $\delta>0$ be such that 
if $R\subset(\deH)^{(2)}$ is any closed rectangle of diameter
$\operatorname{diam}(R)<\delta$, then 
\bqn
\max_Rf-\min_Rf<\epsilon\,.
\eqn

Fix now a cover of $\supp (f)$ by closed rectangles $R_i=\icc{x_i}{y_i}\times\icc{z_i}{w_i}$,
for $i=1,2,\dots,N$, such that 
\be
\item $\operatorname{diam}(R_i)<\delta$;
\item the interiors of the rectangles are pairwise disjoint;
\item $R_i\subset\ioo{d'}{a'}\times\ioo{b'}{c'};$
\item $\{x_i,y_i\}\subset S_{(d',a')}$ and $\{z_i,w_i\}\subset S_{(b',c')}$.
\ee
Observe that for every $1\leq i\leq N$, the quadruple $(x_i,z_i,w_i,y_i)$ is good.
As a result, we have (see Lemma~\ref{lem:new4.12})
\be
\item[(5)] $\mu(R_i)=[x_i,z_i,w_i,y_i], \quad \mu_n(R_i)=[x_i,z_i,w_i,y_i]_n$, and
\item[(6)] $\mu(R_i\cap R_j)=0, \quad \mu_n(R_i\cap R_j)=0$ for all $i\neq j$. 
\ee
It follows then that 
\bqn
\ba
\left|\int f\,d\mu_n-\sum_{i=1}^N(\min_{R_i}f)\mu_n(R_i)\right|
&\leq\sum_{i=1}^N\epsilon\mu_n(R_i)\\
&\leq\epsilon\mu_n(\icc{d'}{a'}\times\icc{b'}{c'})\\
&=\epsilon[a',b',c',d']_n
\ea
\eqn
and similarly 
\bqn
\ba
\left|\int f\,d\mu-\sum_{i=1}^N(\min_{R_i}f)\mu(R_i)\right|
\leq\epsilon[a',b',c',d']\,.
\ea
\eqn
From these inequalities, the assumption that $\lim_n[\,\cdot\,,\,\cdot\,,\,\cdot\,,\,\cdot\,]_n=[\,\cdot\,,\,\cdot\,,\,\cdot\,,\,\cdot\,]$
and (5), we deduce that 
\bqn
\overline{\lim_{n\to\infty}}\left|\int f\,d\mu-\int f\,d\mu_n\right|
\leq2\epsilon[a',b',c',d']
\eqn
and hence
\bqn
\lim_{n\to\infty}\int f\,d\mu_n=\int f\,d\mu\,.
\eqn
\end{proof}

\subsection{Integral crossratios}\label{s.3.5}
The goal of the section is to prove the following
\begin{prop}\label{p.atomic}
If the positive crossratio 
$$[\,\cdot\,,\,\cdot\,,\,\cdot\,,\,\cdot\,]\colon X^{[4]}\to[0,\infty)$$
takes values in the integers $\bN=\{0,1,2,\ldots\}$, then the associated current  corresponds to an integral  geodesic multicurve.
\end{prop}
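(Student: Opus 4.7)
The plan is to show in four steps that $\mu$ (i) takes values in $\bN$ on every proper rectangle in $(\deH)^{(2)}$, (ii) is purely atomic with integer-valued atom weights, (iii) has atoms whose $\Gamma$-orbits are discrete and lift closed geodesics or cusp-to-cusp geodesics of $\Sigma$, and (iv) decomposes as a \emph{finite} sum of such orbits.

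For step (i) I would apply Proposition~\ref{prop:Rm}(\ref{it: open R})--(\ref{it: closed R}): for a proper open rectangle $R\subset(\deH)^{(2)}$ we have $\mu(R)=\sup\{\cR{R'}:R'\in\Rec(X),\ \overline{R'}\subset R\}$, a supremum of non-negative integers bounded above by $\cR{R''}$ for any $R''\in\Rec(X)$ with $\overline{R}\subset\ovcirc{R''}$, hence $\mu(R)\in\bN$; the closed case is analogous. For step (ii), observe first that for every $p\in(\deH)^{(2)}$ a decreasing sequence of open rectangles $R_n\downarrow\{p\}$ gives $\mu(\{p\})=\lim\mu(R_n)$ as a decreasing limit of non-negative integers, hence itself an integer. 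The atom set $A=\{p:\mu(\{p\})>0\}$ is then locally finite (each atom has mass $\geq 1$ and $\mu$ is Radon), so the atomic part $\mu_a=\sum_{p\in A}\mu(\{p\})\delta_p$ is a Radon measure with values in $\bN$ on proper rectangles. The continuous part $\mu_c=\mu-\mu_a$ is an atomless non-negative Radon measure still $\bN$-valued on proper rectangles; combining $\mu_c(\{p\})=0$ with outer regularity yields, for each $p$, an open rectangle $R\ni p$ with $\mu_c(R)<1$, hence $\mu_c(R)=0$. A second-countability covering argument then forces $\mu_c\equiv 0$, so $\mu=\mu_a$.

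For step (iii), $A$ is $\Gamma$-invariant and locally finite, hence every $\Gamma$-orbit in $A$ is discrete. Since $\Gamma$ is torsion-free, the stabilizer of any $(a,b)\in A$ is either infinite cyclic hyperbolic---in which case $(a,b)=(\gamma_-,\gamma_+)$ lifts a closed geodesic on $\Sigma$---or trivial, in which case discreteness of the orbit forces the projected geodesic on $\Sigma$ to be properly embedded; the finite area of $\Sigma$ then forces its endpoints $a,b$ to be parabolic fixed points of $\Gamma$, so the geodesic is cusp-to-cusp.

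The main obstacle is step (iv), namely upgrading local finiteness of $A$ to finiteness of $A/\Gamma$. My plan is to intersect $\mu$ with the curves $\alpha_1,\dots,\alpha_N$ of a pants decomposition of $\Sigma$ and apply Proposition~\ref{prop:new4.10} to identify $i(\mu,\alpha_j)=\per(\alpha_j)$. The period is itself an integer: by Definition~\ref{d.period2} and the monotonicity of the crossratio, it is the limit of an increasing, bounded, $\bN$-valued sequence $[x_n,x,\alpha_j x,y_n]$, which must therefore be eventually constant. Writing $\mu=\sum_i n_i\delta_{c_i}$ as a tentatively countable sum, the identity $i(\mu,\alpha_j)=\sum_i n_i\, i(c_i,\alpha_j)$ with each $i(c_i,\alpha_j)\in\bN$ forces only finitely many $c_i$ to cross any given $\alpha_j$. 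The residual $c_i$ are then each confined to a single pair of pants of the decomposition, and a final argument---either via further test arcs inside each pant or by invoking directly the local finiteness of $A$ inside the lift of a pant in $\H$---shows that these residual terms are also finite in number, completing the proof.
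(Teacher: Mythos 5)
Your steps (i)--(iii) essentially reproduce the paper's argument: you obtain $\mu(R)\in\bN$ for proper rectangles from Proposition~\ref{prop:Rm}(\ref{it: open R})--(\ref{it: closed R}), deduce that $\mu$ is purely atomic (the paper does this more directly, showing every $g\in\supp(\mu)$ is an atom by shrinking open rectangles around it, rather than splitting off $\mu_a$ and $\mu_c$ --- but your version is also correct), and argue that each atom projects to a closed or cusp-to-cusp geodesic using discreteness of the $\Gamma$-orbit (the paper argues via closedness of $\pr(g)$ in $\Sigma$; yours via the stabilizer --- both work).

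Where your proposal diverges, and has a genuine gap, is step (iv). You plan to bound the number of atoms by intersecting with a pants decomposition, correctly observing that each $i(\mu,\alpha_j)$ is a finite integer, so only finitely many atoms cross each $\alpha_j$. But you acknowledge that ``a final argument'' is still needed for atoms disjoint from all $\alpha_j$, and this argument is precisely where the difficulty lies: you would need to show that a closed or cusp-to-cusp geodesic lying in the interior of a pair of pants (possibly with cusps) and disjoint from all boundary curves is one of finitely many possibilities, and you do not actually carry this out. The paper handles finiteness in one stroke, without test curves: since $\Sigma$ has finite area, there is a compact $K\subset\Sigma$ that every closed geodesic and every bi-infinite geodesic (in particular every cusp-to-cusp geodesic) meets. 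Lifting, one obtains a compact $C\subset(\deH)^{(2)}$ with $\Gamma\cdot a\cap C\neq\varnothing$ for every atom $a$. Since the set of atoms is locally finite (each has mass $\geq 1$ and $\mu$ is Radon), $C$ contains only finitely many atoms, hence there are only finitely many $\Gamma$-orbits of atoms. This is cleaner and avoids the need for a pants decomposition altogether; I would replace your step (iv) with this compact-core argument, or at the very least use the local finiteness of atoms in the lift of such a $K$ (the second alternative you tentatively mention) and spell it out.
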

\begin{proof}
We first show that $\mu$ is purely atomic. It follows from Proposition \ref{prop:Rm} (3) and (4) that $\mu(R)\in\bN$ for all proper open and proper closed rectangles. Let $g\in\supp (\mu)\subset (\partial\H)^{(2)}$ and let $R_n$ be a decreasing sequence of open  rectangles with $\bigcap R_n=\{g\}$. Since $\mu(R_n)\in\bN$ we have either 
\begin{enumerate}
\item $\mu(R_n)\geq 1$ for all $n\geq 1$ and $\mu(\{g\})\geq 1$
\end{enumerate}
or
\begin{enumerate}
\item[(2)] there exists $n_0$ such that $\mu(R_n)=0$ for all $n\geq n_0$.
\end{enumerate} 
The second case cannot happen since $g$ is in the support of $\mu$. As a result $\mu$ is purely atomic with $\bN$-valued atoms.

We now show that all geodesics in the support of $\mu$ are either closed or connect two cusps. 
Let $(a,b)\in (\partial\H)^{(2)}$ be such an atom. Then $\Gamma\cdot(a,b)$ meets every compact subset of $(\partial\H)^{(2)}$ in only finitely many points. 
As a result, if $g\subset\H$ is the geodesic connecting $(a,b)$, $\pr(g)\subset \Sigma$ is a closed subset  
where, as always, $\pr\colon\H\to\Sigma$ denotes the universal covering map. 
Thus either $g$ corresponds to a periodic geodesic, or $\pr(g)$ is a geodesic connecting two cusps. 

There is a compact subset $K\subset \Sigma$ such that every biinfinite geodesic, 
as well as every closed geodesic, meets $K$.
Thus if $\calA$ denotes the set of atoms of $\mu$ there is a compact subset $C\subset (\partial\H)^{(2)}$ such that for all $a\in\calA$, 
$\Gamma\cdot a\cap C\neq \emptyset$. This implies that
$$\mu=\sum_{c\in F}n_c\delta _c$$
where
$F$ is a finite set of geodesics either periodic or connecting to cusps, 
$\delta_c$ is the geodesic current corresponding to $c$, 
and $n_c\in\bN^*$.
\end{proof}

\subsection{Restriction to a subsurface}\label{s.rest}

We conclude the section discussing how the construction of the geodesic current associated to a positive crossratio behaves with respect to restriction to subsurfaces. This will be useful in the study of maximal representations. 

Let $\Sigma'\subset \Sigma$ be a subsurface with geodesic boundary.
Let 
$\calG(\Sigma')\subset (\partial \H)^{(2)}$ be
the set of geodesics 
whose projection lies in the interior $\mathring\Sigma'$ of $\Sigma'$.
%
If $\mu$ is a current on $\Sigma$, 
we define $\mu_{|\Sigma'}\in \Curr(\Sigma)$ by
\bqn
\mu_{|\Sigma'}:=\chi_{\calG(\Sigma')}\mu\,,
\eqn
where $\chi_{\calG(\Sigma')}$ is the characteristic function of
$\calG(\Sigma')$.  

We write $i(\mu,\partial \Sigma')=0$ when $i(\mu,c)=0$ for every
boundary component $c$ of $\Sigma'$. This is the case precisely when
no geodesic in the support of $\mu$ intersects  $\partial \Sigma'$;
thus in that case we have
\bqn
i(\mu, c)=i(\mu_{|\Sigma'}, c)
\eqn
for every closed geodesic $c$ contained in $\Sigma'$.

We now choose a finite area hyperbolization 
$\Sigma_0=\Gamma_0\backslash\H$ of
$\mathring\Sigma'$ and a corresponding identification
$h\colon\Gamma_0\to \pi_1(\Sigma')<\Gamma$.
We denote by $\phi\colon\deH\to \deH$ an injective, monotone $h$-equivariant map. This is a quasi-conjugacy that opens all the cusps corresponding to geodesic boundary components of $\Sigma'$. It follows from this discussion that:

\begin{prop}
   Let $\mu$ be a current on $\Sigma$.
Then 
\bqn
\mu_0(A):=\mu((\phi\times\phi)(A))
\eqn
defines a current $\mu_0=\phi^*\mu$ on $\Sigma_0$, that we will call
the current {\em induced by} $\mu$ on $\Sigma_0$.
We have the following properties :
\begin{enumerate}
\item $\phi^*\mu=\phi^*\mu_{|\Sigma'}$ .
\item If $\mu_{|\Sigma'}'$ has compact carrier included in $\mathring\Sigma'$, 
then $\mu_0$ has compact carrier in $\Sigma_0$ .
\item Let $\mu$, $\nu$ be currents on $\Sigma$. Assume that 
$\mu_{|\Sigma'}$ and $\nu_{|\Sigma'}$ have compact carrier
included in $\mathring\Sigma'$. Then 
\bqn
i(\mu_{|\Sigma'},\nu_{|\Sigma'})=i(\mu_0,\nu_0).
\eqn
\item  
We have 
\bqn
i(\mu_0, \gamma)=i(\mu_{|\Sigma'}, h(\gamma))
\eqn
for all hyperbolic $\gamma\in\Gamma_0$.
In particular if $i(\mu,\partial \Sigma')=0$ then
$i(\mu_0, \gamma)=i(\mu, h(\gamma))$ 
for all hyperbolic $\gamma\in\Gamma_0$.
 
\item Assume that $i(\mu,\partial \Sigma')=0$. 
If $\mu$ is the current associated 
to a positive crossratio $b\in\calCR(\hg)$, then $\mu_0$
is the current associated 
to the positive crossratio $b_0=\phi^*b$ in $\calCR(H_{\Gamma_0})$.
\end{enumerate}
\end{prop}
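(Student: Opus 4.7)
The plan is to verify each of the five statements by unpacking the definition $\mu_0(A) = \mu((\phi\times\phi)(A))$ and exploiting two key geometric facts. First, because $\phi$ opens exactly the cusps of $\Sigma_0$ that correspond to geodesic boundary components of $\Sigma'$, the image of $\phi\times\phi$ in $(\deH)^{(2)}$ is contained in $\calG(\Sigma')$, the set of geodesics that project into $\mathring\Sigma'$. Second, the $h$-equivariance of $\phi$ intertwines the $\Gamma_0$-action on $(\deH)^{(2)}$ with the $\pi_1(\Sigma')$-action.

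Part (1) is immediate from the first fact: for any Borel $A$, the image $(\phi\times\phi)(A)$ lies in $\calG(\Sigma')$, so $\mu$ and $\chi_{\calG(\Sigma')}\mu = \mu_{|\Sigma'}$ take the same value on it. Part (2) uses that $\phi$ induces a quasi-conjugacy between the two hyperbolic structures: geodesics in $\supp(\mu_0)$ correspond under $\phi\times\phi$ to geodesics in $\supp(\mu_{|\Sigma'}) \cap \calG(\Sigma')$, and properness of $\pr\colon\H\to\Sigma_0$ transfers a compact carrier in $\mathring\Sigma'$ to a compact carrier in $\Sigma_0$. Part (3) follows by choosing a Borel fundamental domain $D_0$ for the $\Gamma_0$-action on pairs of transverse geodesics in $\H$, noting by $h$-equivariance that $(\phi\times\phi\times\phi\times\phi)(D_0)$ is a Borel fundamental domain for the $\pi_1(\Sigma')$-action on transverse pairs lying in $\calG(\Sigma')\times\calG(\Sigma')$, and combining with part (1) to rewrite $\mu_{|\Sigma'}\otimes\nu_{|\Sigma'}$ as the push-forward of $\mu_0\otimes\nu_0$.

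For (4), if $\gamma\in\Gamma_0$ is hyperbolic with fixed points $\gamma_\pm$, then $h$-equivariance sends these to the fixed points of the hyperbolic element $h(\gamma)\in\pi_1(\Sigma')$, and the standard formula
\[
i(\mu_0,\gamma) = \mu_0\bigl(\ioo{\gamma_+}{\gamma_-}\times\ioc{x}{\gamma x}\bigr)
\]
transports directly under $\phi\times\phi$ to the corresponding expression for $i(\mu_{|\Sigma'}, h(\gamma))$. The second assertion then follows from the observation made in the text that $i(\mu,c)=i(\mu_{|\Sigma'},c)$ for every closed geodesic $c\subset\Sigma'$ whenever $i(\mu,\partial\Sigma')=0$. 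For (5), I would invoke the characterization of the current associated to a positive crossratio given in Proposition~\ref{prop:Rm}(1): for every proper rectangle $R_0\in\Rec(H_{\Gamma_0})$ with vertices $(a,b,c,d)$, the value $b_0(R_0)=b(\phi(a),\phi(b),\phi(c),\phi(d))$ equals the crossratio of the image rectangle $(\phi\times\phi)(R_0)\in\Rec(H_\Gamma)$, whose $\mu$-measure lies between $\mu\bigl(\ovcirc{(\phi\times\phi)(R_0)}\bigr)$ and $\mu\bigl(\ov{(\phi\times\phi)(R_0)}\bigr)$. Because $i(\mu,\partial\Sigma')=0$ forbids atomic mass of $\mu$ on boundary geodesics, these bracket $\mu_0(\ovcirc{R_0})$ and $\mu_0(\ov{R_0})$; the uniqueness clause of Proposition~\ref{prop:Rm} then identifies $\mu_0$ as the current associated to $b_0$.

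The hard part will be making rigorous, in (4) and (5), the approximation argument near the endpoints of lifts of $\partial\Sigma'$: since $\phi$ is not surjective, the open and closed rectangles used in Proposition~\ref{prop:Rm} do not transport exactly, and the hypothesis $i(\mu,\partial\Sigma')=0$ must be used precisely to rule out atoms of $\mu$ on geodesics asymptotic to the boundary of $\Sigma'$, so that the inner/outer regularity bounds coming from $\mu$ and $\mu_0$ agree in the limit.
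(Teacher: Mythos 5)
The overall strategy in your proposal is the natural one, and the paper itself offers no proof of this proposition (it simply says it ``follows from this discussion''). For parts (1)--(3) your sketch is sound; the one step worth spelling out is why $(\phi\times\phi)(A)\subset\calG(\Sigma')$: injectivity and $h$-equivariance of $\phi$ force that for each lift of a boundary component of $\Sigma'$ at most one of its two endpoints lies in the image of $\phi$ (if both endpoints $\eta_\pm$ of such a lift were hit, say $\phi(\xi)=\eta_+$ and $\phi(\xi')=\eta_-$, then the parabolic $p\in\Gamma_0$ with $h(p)=\eta$ would fix both $\xi$ and $\xi'$, contradicting uniqueness of its fixed point), so no pair in the image of $\phi\times\phi$ is a boundary lift, and any other geodesic with both endpoints in the limit set of $\pi_1(\Sigma')$ projects into $\mathring\Sigma'$.

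Where your analysis goes astray is the final paragraph on (4) and (5). You say the hypothesis $i(\mu,\partial\Sigma')=0$ is what handles the non-surjectivity of $\phi$, but that cannot be the mechanism: the first equality in (4) is stated \emph{unconditionally}, and it already faces this problem. Transporting the fundamental-domain formula gives $\mu_0(\ioo{\gamma_+}{\gamma_-}\times\ioc{x}{\gamma x})=\mu\bigl(\phi(\ioo{\gamma_+}{\gamma_-})\times\phi(\ioc{x}{\gamma x})\bigr)$, and $\phi(\ioo{\gamma_+}{\gamma_-})$ is a \emph{proper} subset of $\ioo{h(\gamma)_+}{h(\gamma)_-}$: the discrepancy concentrates on the countably many boundary endpoints that $\phi$ misses. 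Each such point is a hyperbolic fixed point in $\hg$, so Lemma~\ref{lem:new4.9} (no current charges pencils over hyperbolic fixed points) is what kills the corresponding mass and closes (4); the hypothesis on $\partial\Sigma'$ plays no role there. That hypothesis enters only in (5), where $\mu_0$ is compared against $\mu$ rather than $\mu_{|\Sigma'}$: one must additionally discard mass on geodesics with an endpoint strictly inside one of the gap intervals of $\deH\smallsetminus\mathrm{im}(\phi)$, and those are precisely the geodesics crossing $\partial\Sigma'$, which the hypothesis $i(\mu,\partial\Sigma')=0$ forbids. Note it does \emph{nothing} for geodesics merely asymptotic to $\partial\Sigma'$ (they do not intersect the boundary transversally and can perfectly well carry $\mu$-mass), so ``ruling out atoms on geodesics asymptotic to the boundary'' misdescribes its role. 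The two mechanisms — the unconditional Lemma~\ref{lem:new4.9} for the missed boundary endpoints, and $i(\mu,\partial\Sigma')=0$ for geodesics entering the gap intervals — need to be kept distinct to complete the argument.
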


\section{Equivariant tree embeddings}\label{s.tree}
In this section we discuss barycenter maps compatible with positive crossratios and prove Theorem \ref{thm_intro:thm7}. In \S\ref{s.acttree} we discuss a first class of actions to which Theorem \ref{thm_intro:thm7} applies: framed actions on trees.
\subsection{Actions with  compatible crossratio and barycenter}
\label{sec:5}
Let $\rho\colon\Gamma\to\Isom(\calX)$ be an action by isometries on a metric space $(\calX,d_\calX)$
and $ X\subset\deH$ be a non-empty $\Gamma$-invariant subset.

\begin{defn}\label{d.bary} \be
\item A {\em $\calX$-valued barycenter map}  on $X$ 
(or just a {\em barycenter map} if the context is clear)
is a map 
\bqn
\beta\colon
 X^{(3)}\longrightarrow \calX
\eqn
defined on  the set $X^{(3)}$ of triples of distinct points in $ X$ that verifies:
\be
\item $\beta$ is $S_3$-invariant;
\item $\beta$ is $\Gamma$-equivariant.
\ee
\item  A barycenter map is {\em compatible with a positive crossratio} $[\,\cdot\,,\,\cdot\,,\,\cdot\,,\,\cdot\,]$
defined on $ X$ if, whenever $(a,b,c,d)\in X^{[4]}$,
\bqn
[a,b,c,d]=d_\calX(\beta(a,b,d),\beta(a,c,d))\,.
\eqn
\ee
\end{defn}
Condition (2) is inspired by the construction of a crossratio induced by a framed action on a tree, as in Example \ref{ex:trees}.  Indeed we have:

\begin{example}
Let $\rho\colon\Gamma\to \Isom(\mathcal T)$ be a framed action of $\Gamma$ on a real tree $\mathcal T$ 
with framing $\varphi\colon X\to \partial_\infty\mathcal T$. 
Then the usual barycenter $B\colon\partial_\infty\mathcal T^{(3)}\to \mathcal T$ 
induces a barycenter map on $X$ compatible with the positive crossratio $[\,\cdot\,,\,\cdot\,,\cdot\,,\,\cdot\,]_\varphi$.
\end{example}

The goal of the section is to prove the following:
\begin{thm}\label{thm:5.2} 
Let $\rho\colon\Gamma\to\Isom(\calX)$ be an isometric action. Assume that there is a positive crossratio
on $ X$ and a compatible barycenter map,
 that the geodesic current $\mu$ associated to the positive crossratio is of lamination type
and let $\calV(\mu)$ be the set of vertices of the $\bR$-tree $\calT(\mu)$ associated to $\mu$.
Then there is an equivariant isometric embedding 
\bqn
\calV(\mu)\hookrightarrow \calX\,.
\eqn
%
%
\end{thm}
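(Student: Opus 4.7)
The plan is to construct a map $F\colon\calV(\mu)\to\calX$ by sending a vertex $v\in\calV(\mu)$, corresponding to a complementary region $R_v$ of the geodesic lamination $\supp(\mu)$, to the point $F(v):=\beta(a,b,c)\in\calX$ for any three distinct ideal vertices $a,b,c$ of $R_v$ that lie in $X$. One must then verify well-definedness, $\Gamma$-equivariance, and the isometry property.

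The cornerstone of well-definedness is the following claim: if $(a,b,c,d)\in X^{[4]}$ all lie in the ideal boundary of a single complementary region $R$, then $[a,b,c,d]=0$. Indeed, $\mu(\ioo{d}{a}\times\ioo{b}{c})=0$, since any geodesic with one endpoint in $\ioo{d}{a}$ and the other in $\ioo{b}{c}$ must cross the interior of $R$ and hence cannot lie in $\supp(\mu)$, a lamination; in the case $X\subset\hg$ of interest in the applications, Lemma \ref{lem:new4.9} kills the boundary pencils $\{a\}\times\icc{b}{c}$ and their analogues (choosing $a,b,c,d$ generically so their repelling partners do not lie in the opposite arcs), and the upper bound in Proposition \ref{prop:Rm}(1) then forces $[a,b,c,d]=0$. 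Compatibility of $\beta$ with the crossratio yields $\beta(a,b,d)=\beta(a,c,d)$; combined with the $S_3$-invariance of $\beta$, this allows one to interpolate one coordinate at a time between any two ordered triples in $\partial_\infty R\cap X$, establishing that $F(v)$ is independent of the choice. Equivariance $F(\gamma v)=\rho(\gamma)F(v)$ is immediate from the $\Gamma$-equivariance of $\beta$ together with $\rho(\gamma)R_v=R_{\gamma v}$.

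For the isometry $d_\calX(F(v),F(v'))=d_{\calT(\mu)}(v,v')$, consider first two vertices $v,v'$ whose regions $R,R'$ share a boundary leaf $\ell$ of $\supp(\mu)$ with endpoints $a,d\in X$. Choose $b\in\partial_\infty R\cap X$ and $c\in\partial_\infty R'\cap X$ so that $(a,b,c,d)$ is positively oriented in $X^{[4]}$. By well-definedness, $F(v)=\beta(a,b,d)$ and $F(v')=\beta(a,c,d)$, and compatibility gives
\[ d_\calX(F(v),F(v')) = [a,b,c,d] = \mu(\icc{d}{a}\times\icc{b}{c}), \]
the second equality again invoking Lemma \ref{lem:new4.9} to kill boundary pencils. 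The right-hand side is precisely the $\mu$-mass of leaves separating $R$ from $R'$, which by the construction of $\calT(\mu)$ via the straight pseudo-distance on $\H$ equals $d_{\calT(\mu)}(v,v')$. Injectivity of $F$ then follows from the isometry, since distinct vertices of $\calT(\mu)$ have positive tree-distance. For general $v,v'$ one subdivides the geodesic in $\calT(\mu)$ through a finite chain (or a continuous parameter in the $\bR$-tree case) of intermediate regions and sums using the additivity property (CR3) of the crossratio together with $\sigma$-additivity of $\mu$.

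The main obstacle has two facets. First, one must ensure that $\partial_\infty R\cap X$ contains enough points to define $F(v)$ and to run the interpolation; in the setting $X=\hg$ of interest for the applications (Theorem \ref{thm_intro:thm5}), $X$ is dense in $\deH$ and ideal vertices of complementary regions are fixed points of hyperbolic elements, so the hypothesis is automatic and Lemma \ref{lem:new4.9} applies verbatim. In the fully general statement an approximation argument using the inner and outer regularity of $\mu$ established in Proposition \ref{lem:2.5} is required. Second, the fine matching between the crossratio $[a,b,c,d]$ and the $\mu$-mass of leaves separating $R$ from $R'$ in the isometry step demands careful control of boundary pencils at the four witness points, which is again handled by Lemma \ref{lem:new4.9} when $X\subset\hg$, and accounts for the precise identification of the adjacent-regions distance in $\calT(\mu)$ with the value of the crossratio on positively oriented witness quadruples.
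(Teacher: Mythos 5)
Your construction of the map $F$ has a genuine gap. You define $F(v):=\beta(a,b,c)$ for $a,b,c$ \emph{ideal vertices} of the complementary region $R_v$ lying in $X$, and to justify this you claim that ``ideal vertices of complementary regions are fixed points of hyperbolic elements.'' That claim is false: for a filling measured lamination (e.g.\ the stable lamination of a pseudo-Anosov), the complementary regions are ideal polygons whose spike endpoints are generically \emph{not} in $\hg$, and in any case there is no reason for a general $\Gamma$-invariant dense $X$ to contain them. Worse, even when an ideal vertex $a$ does lie in $X$, it is an endpoint of a boundary leaf of $R_v$, and boundary leaves belong to $\supp(\mu)$; so the pencils $\{a\}\times\icc{b}{c}$ can in principle carry mass, which is exactly why you are forced into the Lemma~\ref{lem:new4.9} argument --- a tool that is only available in the special case $X\subset\hg$.

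The paper sidesteps both issues by choosing the witness points $a,b,c$ \emph{strictly inside distinct connected components of $\deH\smallsetminus\bRr(\infty)$} (not at the spike vertices). These components are open arcs, so density of $X$ alone guarantees the choice is possible in the stated generality, and the relevant closed rectangles $\icc{d}{a}\times\icc{b}{c}$ are then entirely disjoint from $\supp(\mu)$ (Lemma~\ref{lem:5.3}: any geodesic between the two arcs crosses the interior of $\bRr$), so no boundary-pencil argument is needed at all. Likewise, for the isometry step the paper does not decompose the tree geodesic from $\bRr_1$ to $\bRr_2$ into a chain of adjacent regions --- which in an $\bR$-tree need not be a finite or even countable chain, so your ``finite chain / continuous parameter'' step is not a proof. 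Instead (Lemma~\ref{lem:5.6}) it picks a single quadruple $(a,b,c,d)$ with $a,b$ in two distinct arcs of $\deH\smallsetminus\bRr_1(\infty)$ inside the separating interval and $c,d$ likewise for $\bRr_2$, uses $\mu$-shortness of the boundary leaves to equate $\mu(\ioo{d}{a}\times\ioo{b}{c})=[a,b,c,d]=\mu(\icc{d}{a}\times\icc{b}{c})$, and identifies this directly with $d_\mu(\bRr_1,\bRr_2)$. You should redo the construction using interior arc points rather than spike vertices, and replace the chain argument by this direct computation.
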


In order to define the embedding, we will show that  each complementary region
of the geodesic lamination $\widetilde\calL:=\supp(\mu)$ leads to  a well-defined barycenter.  More precisely, given a complementary region $\bRr$ of $\widetilde\calL$,
we will show in Proposition~\ref{prop: barycenter map factorizes} that the map that to three points $a,b,c\in(\deH\smallsetminus\bRr(\infty))\cap X$
associates their barycenter is constant if the points are in different connected components of $\deH\smallsetminus\bRr(\infty)$.
Here $\bRr(\infty)$ is the intersection of $\deH$ with the closure of $\bRr$ in $\overline{\H}$.
 In Lemma~\ref{lem:5.6} we show that the obtained map is isometric.

As a first step in the proof of Proposition~\ref{prop: barycenter map factorizes}, we show that the crossratio of 4-tuples separated by the lamination vanishes:
\begin{lem}\label{lem:5.3}  Let $\bRr$ be a complementary region of $\widetilde\calL$ and 
let $(a,b,c,d)$ be positively oriented such that $\{a,b,c,d\}\subset\deH\smallsetminus\bRr(\infty)$
and $\{a,b\}$, as well as $\{c,d\}$, are in different connected components of $\deH\smallsetminus\bRr(\infty)$.
Then 
\bqn
\mu(\icc{d}{a}\times\icc{b}{c})=0\,.
\eqn  
In particular, if in addition $(a,b,c,d)\in X^{[4]}$ 
\bqn
[a,b,c,d]=0.
\eqn
\end{lem}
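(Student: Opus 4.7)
The plan is to reduce the first statement to the assertion that no geodesic in the support of $\mu$ has one endpoint in $\icc{d}{a}$ and the other in $\icc{b}{c}$: if the rectangle $\icc{d}{a}\times\icc{b}{c}$ is disjoint from $\supp(\mu) = \widetilde{\calL}$ (a closed subset of $(\deH)^{(2)}$), then its $\mu$-measure vanishes. The second statement is then immediate from Proposition~\ref{prop:Rm}(\ref{it: mu(int) leq cr leq mu(closure)}), since $[a,b,c,d] \leq \mu(\icc{d}{a}\times\icc{b}{c}) = 0$ together with positivity (CR4) forces equality.

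To extract the combinatorial data, I would first observe that the hypothesis that $\{a,b\}$ lies in distinct components of $\deH\smallsetminus \bRr(\infty)$ produces a point $p\in \ioo{a}{b}\cap \bRr(\infty)$, and similarly a point $q\in\ioo{c}{d}\cap\bRr(\infty)$. Combined with $(a,b,c,d)$ being positively oriented, this yields the cyclic order $(a,p,b,c,q,d)$ on $\deH$.

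Now suppose for contradiction that $(x,y)\in\widetilde{\calL}$ with $x\in\icc{d}{a}$ and $y\in\icc{b}{c}$. Since these two closed arcs are disjoint (they are separated by $p$ and $q$), we have $x\neq y$, and the refined cyclic order $(x,a,p,b,y,c,q,d)$ shows that $p$ and $q$ lie in opposite open arcs of $\deH\smallsetminus \{x,y\}$. Then $(x,y)\subset\H$ splits $\H$ into two open half-planes $H_1, H_2$ with $\ov{H_i}\cap\deH$ equal to the two closed complementary arcs of $\{x,y\}$. Because $(x,y)\in\widetilde{\calL}$ and $\bRr$ is a complementary region of the lamination, $\bRr\cap(x,y)=\varnothing$; since $\bRr$ is open and connected, it must lie entirely in one $H_i$. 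Consequently $\bRr(\infty) = \ov{\bRr}\cap\deH$ is contained in one of the two closed arcs bounded by $x$ and $y$, contradicting the fact that $p$ and $q$ lie in opposite \emph{open} arcs.

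The one delicate point to handle carefully is the possibility of degeneracies $x\in\{a,d\}$ or $y\in\{b,c\}$: one must still verify that $p$ remains strictly between $x$ and $y$ (and $q$ strictly between $y$ and $x$) under these boundary cases, which follows because $p\in\ioo{a}{b}$ and $q\in\ioo{c}{d}$ are interior to the arcs bounding $a,b$ and $c,d$ respectively. Apart from this bookkeeping, the argument is purely a topological separation statement combined with the defining property of a complementary region.
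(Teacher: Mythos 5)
Your proposal is correct and takes essentially the same approach as the paper: reduce to showing the closed rectangle is disjoint from $\supp(\mu)=\widetilde\calL$, then argue that any geodesic $g\in\widetilde\calL$ joining $\icc{d}{a}$ to $\icc{b}{c}$ would force $\bRr$ into one half-plane of $g$, contradicting the separation hypothesis. The only cosmetic difference is that you introduce explicit witnesses $p\in\ioo{a}{b}\cap\bRr(\infty)$ and $q\in\ioo{c}{d}\cap\bRr(\infty)$ to land the contradiction, while the paper argues directly that $\{a,b\}$ or $\{c,d\}$ would end up in the same component of $\deH\smallsetminus\bRr(\infty)$; both finish the same underlying topological argument.
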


\begin{proof}  Since $\supp(\mu)=\widetilde{\calL}$, we have that $\mu((\deH)^{(2)}\smallsetminus\widetilde\calL)=0$.
Thus it suffices to show that under the hypotheses of the lemma
\bqn
\icc{d}{a}\times\icc{b}{c}\subset(\deH)^{(2)}\smallsetminus\widetilde\calL\,.
\eqn
Assume there is a geodesic $g\in\widetilde\calL$ connecting $\icc{d}{a}$ to $\icc{b}{c}$. 
Then $\bRr$ must be contained in one of the half planes determined by $g$ and hence
either $\{a,b\}$ or $\{c,d\}$ are contained in the same connected component of 
$(\deH)^{(2)}\smallsetminus\bRr(\infty)$, contradicting the hypothesis.
The second statement follows directly from Proposition \ref{prop:Rm} (1).
\end{proof}

With the use of Lemma~\ref{lem:5.3} we proceed to define the barycenter of a complementary region of $\widetilde\calL$.

\begin{prop}\label{prop: barycenter map factorizes}
Let $\bRr$ be a complementary region of $\widetilde\calL$.
Then $\beta(a,b,c)$ is independent of the choice of $\{a,b,c\}\subset X$,
provided $a,b,c$ lie in three distinct connected components of $\deH\smallsetminus\bRr(\infty)$.
\end{prop}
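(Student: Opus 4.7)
The plan is to establish the proposition by showing that the barycenter is unchanged under modifying a single argument within one component. More precisely, it suffices to prove the following intermediate claim: if $a, a' \in X$ lie in one connected component $I_a$ of $\deH \smallsetminus \bRr(\infty)$, while $b \in X \cap I_b$ and $c \in X \cap I_c$ lie in two other distinct components, then $\beta(a,b,c) = \beta(a',b,c)$. Iterating this (cyclically permuting the roles of the three components) then yields the full proposition.

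To prove the intermediate claim, I would first arrange the cyclic order: because $I_a$, $I_b$, $I_c$ are pairwise disjoint intervals in $\deH$, inserting $a'$ next to $a$ inside $I_a$ produces a positively oriented $4$-tuple of the form $(c, a, a', b)$ or $(c, a', a, b)$, depending on the linear order of $a$ and $a'$ in $I_a$. In either case the tuple lies in $X^{[4]}$, and the pair-separation hypothesis of Lemma~\ref{lem:5.3} is verified directly: the first two arguments $\{c, a\}$ (resp.\ $\{c, a'\}$) come from $I_c$ and $I_a$, and the last two $\{a', b\}$ (resp.\ $\{a, b\}$) come from $I_a$ and $I_b$. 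Lemma~\ref{lem:5.3} then yields that the crossratio on this $4$-tuple vanishes.

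Applying the compatibility of the barycenter with the crossratio to, say, the tuple $(c, a, a', b)$, one obtains
\[
0 \;=\; [c, a, a', b] \;=\; d_\calX\bigl(\beta(c, a, b),\, \beta(c, a', b)\bigr),
\]
so $\beta(c, a, b) = \beta(c, a', b)$. The $S_3$-invariance of $\beta$ then translates this to $\beta(a, b, c) = \beta(a', b, c)$, proving the claim. Permuting which component is being varied yields the analogous statements for changing $b$ or $c$, and combining them gives the proposition.

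There is no serious obstacle here; the only care needed is the small bookkeeping to confirm that whichever of the two cyclic arrangements of $a, a'$ inside $I_a$ occurs, one always lands in a positively oriented $4$-tuple whose first and last pairs separate the complementary region $\bRr$ in the sense required by Lemma~\ref{lem:5.3}. Everything else is an immediate consequence of compatibility and of the $S_3$-symmetry of $\beta$.
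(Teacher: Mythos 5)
Your argument correctly establishes what the paper calls step (1): for a \emph{fixed} triple of components $I_a, I_b, I_c$, the value $\beta(a,b,c)$ is unchanged when each point is perturbed within its own component. That step matches the paper's proof exactly. However, the assertion that ``iterating this (cyclically permuting the roles of the three components) then yields the full proposition'' is not correct, and this is where a genuine gap opens. The proposition requires $\beta(a,b,c)$ to depend only on $\bRr$, so it must agree across \emph{any} choice of three distinct components of $\deH\smallsetminus\bRr(\infty)$. Complementary regions of $\widetilde\calL$ typically have many sides, so $\deH\smallsetminus\bRr(\infty)$ can have many components: if $a\in I_1$, $b\in I_2$, $c\in I_3$ and $a'\in I_4$, $b'\in I_5$, $c'\in I_6$ with all six distinct, your intermediate claim never applies and nothing in your argument connects $\beta(a,b,c)$ to $\beta(a',b',c')$.

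The missing ingredient is the paper's step (2): if $a\in I_1$, $b\in I_2$, $c\in I_3$ and $c'\in I_4$ with $I_1,\dots,I_4$ distinct, then $\beta(a,b,c)=\beta(a,b,c')$. This is not a cosmetic extension of your argument, because it hinges on a case split on the cyclic position of $c'$. When $c$ and $c'$ lie on the same side of the geodesic through $a,b$, the tuple $(b,c,c',a)$ is (up to swapping $a\leftrightarrow b$, $c\leftrightarrow c'$) positively oriented, and since $\{a,b\}$ and $\{c,c'\}$ are each separated by $\bRr$, Lemma~\ref{lem:5.3} gives $[b,c,c',a]=0$ and hence $\beta(a,b,c)=\beta(a,b,c')$, just as in your argument. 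But when $c$ and $c'$ lie on opposite sides of that geodesic, the tuple $(a,c',b,c)$ is the one that is positively oriented, and applying compatibility directly only yields $\beta(a,c',c)=\beta(a,b,c)$, which is not the identity you need. The paper closes this case by a chain that invokes the previous case twice together with $S_3$-invariance:
\[
\beta(a,b,c')=\beta(a,c',b)=\beta(a,c',c)=\beta(c,a,c')=\beta(c,a,b)=\beta(a,b,c).
\]
You would need to supply step (2), including this chain, and then concatenate steps (1) and (2) to pass between two arbitrary triples of components.
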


\begin{proof}  We split the proof in three easy steps.

(1) {\em Given three connected components $I_1,I_2,I_3$ of $\deH\smallsetminus\bRr(\infty)$,
$\beta(a,b,c)$ is independent of the choices $a\in I_1\cap  X$, $b\in I_2\cap  X$
and $c\in I_3\cap  X$.}

Indeed, given $\{a,a'\}\subset I_1\cap  X$, we may assume, modulo exchanging $b$ and $c$,
and also $a$ and $a'$,
that $(c,a,a',b)\in X^{[4]}$.  By Lemma~\ref{lem:5.3} we have then that
\bqn
d_X(\beta(c,a,b),\beta(c,a',b))=[c,a,a',b]=0\,.
\eqn
Thus $\beta(a,b,c)=\beta(a',b,c)$.  

Given now $b'\in I_2\cap  X$ and $c'\in I_3\cap  X$,
we conclude, using the $S_3$-invariance of $\beta$, that 
\bqn
\beta(a',b',c')=\beta(a',b',c)=\beta(a',b,c)=\beta(a,b,c).
\eqn

(2) {\em Let $I_1,I_2,I_3,I_4$ be distinct components of $\deH\smallsetminus\bRr(\infty)$,
$\{a,b,c,c'\}\subset X$ with $a\in I_1$, $b\in I_2$, $c\in I_3$ and $c'\in I_4$.
Then $\beta(a,b,c)=\beta(a,b,c')$.}

We distinguish two cases.

(2.a) If $c,c'$ are in the same connected component of $\deH\smallsetminus\{a,b\}$,
then possibly upon permuting $a,b$ and $c,c'$, we may assume that $(b,c,c',a)\in X^{[4]}$.
By Lemma~\ref{lem:5.3} this implies that
\bqn
d_X(\beta(b,c,a),\beta(b,c',a))=[b,c,c',a]=0\,.
\eqn
Thus $\beta(b,c,a)=\beta(b,c',a)$.

(2.b) If instead $c,c'$ are in distinct connected components of $\deH\smallsetminus\{a,b\}$,
we may assume, possibly permuting $a,b$ and $c,c'$, that $(a,c',b,c)$ is positively oriented.
Using (2.a) in the second and fourth equality we obtain
\bqn
\beta(a,b,c')=\beta(a,c',b)=\beta(a,c',c)=\beta(c,a,c')=\beta(c,a,b)=\beta(a,b,c)\,,
\eqn
which shows the assertion.

(3) We finish now the proof of the proposition.  Let $\{a,b,c,a',b',c'\}\subset X$
with $a\in I_1$, $b\in I_2$, $c\in I_3$, $a'\in I_1'$, $b'\in I_2'$ and $c\in I_3'$,
where $I_1, I_2, I_3$ and $I_1', I_2', I_3'$ are distinct connected components of $\deH\smallsetminus\bRr(\infty)$. We can assume, up to reordering the indices that $I_j\neq I_k$ for $j\neq k$.
Then it follows from (1) and (2) that 
\bqn
\beta(a,b,c)=\beta(a',b,c)=\beta(a',b',c)=\beta(a',b',c')\,.
\eqn
\end{proof}

\begin{defn}  The \emph{barycenter} $\beta(\bRr)$ of a complementary region $\bRr$ of $\widetilde\calL$ is the point $\beta(a,b,c)$ for any choice $\{a,b,c\}\subset X$
of points lying in distinct components of $\deH\smallsetminus\bRr(\infty)$.
\end{defn}

Taking into account the discussion in  \S\ref{s.treemu}, the following lemma concludes the proof of Theorem~\ref{thm:5.2}. 

\begin{lem}\label{lem:5.6}  For the distance $d_{\mu}$ on the set
 $\calV(\mu)$ 
 of complementary regions of $\widetilde\calL$,
 we have
\bqn
d_\calX(\beta(\bRr_1),\beta(\bRr_2))=d_{\mu}(\bRr_1,\bRr_2)
\eqn
for all $\bRr_1,\bRr_2\in\calV(\mu)$. 
\end{lem}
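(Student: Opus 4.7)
The strategy is to realize both complementary-region barycenters from a common $4$-tuple in $X^{[4]}$, invoke the compatibility of the barycenter with the crossratio to express $d_\calX(\beta(\bRr_1),\beta(\bRr_2))$ as a single crossratio, and finally recognise the crossratio as the transverse $\mu$-measure of the leaves separating $\bRr_1$ from $\bRr_2$.

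First, given two distinct regions $\bRr_1,\bRr_2\in\calV(\mu)$, I would choose $(u_1,u_2,u_3,u_4)\in X^{[4]}$, positively oriented, with the following two properties:
\begin{enumerate}
\item[(a)] $u_1,u_2,u_4$ lie in three distinct connected components of $\deH\smallsetminus\bRr_1(\infty)$;
\item[(b)] $u_1,u_3,u_4$ lie in three distinct connected components of $\deH\smallsetminus\bRr_2(\infty)$.
\end{enumerate}
By Proposition~\ref{prop: barycenter map factorizes}, (a) gives $\beta(u_1,u_2,u_4)=\beta(\bRr_1)$ and (b) gives $\beta(u_1,u_3,u_4)=\beta(\bRr_2)$. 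The compatibility of $\beta$ with the crossratio then yields
\[
d_\calX(\beta(\bRr_1),\beta(\bRr_2))=d_\calX\bigl(\beta(u_1,u_2,u_4),\,\beta(u_1,u_3,u_4)\bigr)=[u_1,u_2,u_3,u_4].
\]

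Next I would compute $[u_1,u_2,u_3,u_4]$ measure-theoretically. Proposition~\ref{prop:Rm}(1) sandwiches it between $\mu(\ioo{u_4}{u_1}\times\ioo{u_2}{u_3})$ and $\mu(\icc{u_4}{u_1}\times\icc{u_2}{u_3})$. Since $\mu$ is of lamination type (it charges only $\widetilde\calL=\supp(\mu)$), the only possible contributions to $\mu$-mass on the boundary pencils $\{u_i\}\times I$ come from atomic leaves of $\widetilde\calL$ incident to some $u_i$; their set of endpoints is countable, so choosing the $u_i$ generically in $X$ avoids them and forces equality $[u_1,u_2,u_3,u_4]=\mu(\ioo{u_4}{u_1}\times\ioo{u_2}{u_3})$. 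By the choice of the $u_i$, the rectangle $\ioo{u_4}{u_1}\times\ioo{u_2}{u_3}$ meets $\widetilde\calL$ exactly in the set of (one-orientation) leaves that separate $\bRr_1$ from $\bRr_2$; combined with the flip-invariance of $\mu$ and the definition of the straight pseudodistance $d_\mu$ from \S\ref{s.treemu} applied at any $x\in\bRr_1$, $y\in\bRr_2$ lying off $\widetilde\calL$, this gives $\mu(\ioo{u_4}{u_1}\times\ioo{u_2}{u_3})=d_\mu(\bRr_1,\bRr_2)$, completing the chain of equalities.

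The main obstacle is the construction in Step~1: finding a single $4$-tuple whose component memberships are controlled simultaneously with respect to the two ideal polygons $\bRr_1(\infty)$ and $\bRr_2(\infty)$ while respecting the cyclic order. I would argue this using the tree geodesic in $\calT(\mu)$ from $[\bRr_1]$ to $[\bRr_2]$: it picks out a \emph{facing side} of $\bRr_1$ and a facing side of $\bRr_2$; one then places $u_1,u_4$ in the two arcs of $\deH\smallsetminus\bRr_1(\infty)$ immediately bracketing the endpoints of the facing side of $\bRr_1$, and similarly $u_2,u_3$ bracketing the endpoints of the facing side of $\bRr_2$. In the adjacent case, where $\bRr_1,\bRr_2$ are separated by a single leaf $g_0=(p,q)$, this places the four points in the four arcs of $\deH\smallsetminus\{p,V,q,V'\}$ (with $V,V'$ the opposite vertices of the two facing sides), and an explicit verification confirms both the component conditions (a),(b) and the positive cyclic order. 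Density of $X$ in $\deH$ realises these choices inside $X$.
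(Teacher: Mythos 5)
Your overall plan matches the paper's: fix one $4$-tuple $(u_1,u_2,u_3,u_4)\in X^{[4]}$ witnessing both region-barycenters, use compatibility to get $d_\calX(\beta(\bRr_1),\beta(\bRr_2))=[u_1,u_2,u_3,u_4]$, then identify that crossratio with the $\mu$-mass of the separating leaves. The abstract conditions (a), (b) you formulate are exactly what is needed. However, the concrete recipe you give for achieving them is mislabeled: you place $u_1,u_4$ near $\bRr_1$ and $u_2,u_3$ near $\bRr_2$. With that choice $u_1,u_4$ both lie in the single far component of $\deH\smallsetminus\bRr_2(\infty)$, so $(u_1,u_3,u_4)$ meet only two components for $\bRr_2$, violating (b); moreover the cyclic order is then $(u_1,u_4,u_2,u_3)$, not positive. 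The same flaw appears in your sketch of the adjacent case. The configuration that does work — and is what the paper uses — puts $u_1,u_2$ in two distinct $\bRr_1$-side arcs of $\deH\smallsetminus\bRr_1(\infty)$ and $u_3,u_4$ in two distinct $\bRr_2$-side arcs of $\deH\smallsetminus\bRr_2(\infty)$; then $(u_1,u_2,u_4)$ occupy three components for $\bRr_1$ (the two near arcs plus the far arc, which contains $u_4$), $(u_1,u_3,u_4)$ three components for $\bRr_2$, and $(u_1,u_2,u_3,u_4)$ is positively oriented.

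For the identification $[u_1,u_2,u_3,u_4]=\mu(\ioo{u_4}{u_1}\times\ioo{u_2}{u_3})$ you invoke genericity (perturb the $u_i$ to avoid points with positive pencil mass). This can be made to work, but the justification you give — that only ``atomic leaves incident to some $u_i$'' can contribute — is not the right reason; what makes the bad set countable is that the first-coordinate marginal of a Radon measure restricted to a compact rectangle has at most countably many atoms, and one must also be careful that the bad set for one $u_i$ depends on the positions of the others. The paper avoids all of this: since the boundary leaves of $\bRr_1$ and $\bRr_2$ lie in $\supp(\mu)$ and are therefore $\mu$-short, one has directly $\mu(\{u_1\}\times\icc{u_2}{u_3})\leq\mu(\ioo{l_1}{l_2}\times\ioo{l_2}{l_1})=0$, where $(l_1,l_2)$ is the side of $\bRr_1$ separating $u_1$ from $\bRr_1$, and similarly for the other three boundary pencils. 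This gives the equality for every admissible choice of the $u_i$, with no genericity bookkeeping, and is the cleaner route.
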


\begin{proof}  Let $(x_1,y_1)$ be the endpoints of the geodesic in $\partial\bRr_1$ separating $\bRr_1$ from $\bRr_2$
and $(x_2,y_2)$ the endpoints of the geodesic in $\partial\bRr_2$ separating $\bRr_2$ from $\bRr_1$,
ordered so that $(x_1,y_1,x_2,y_2)\in X^{[4]}$.
Choose $a,b\in\ioo{x_1}{y_1}$ in different connected components of $\deH\smallsetminus\bRr_1(\infty)$
and $c,d\in\ioo{x_2}{y_2}$ in different connected components of $\deH\smallsetminus\bRr_2(\infty)$
in such a way that $(a,b,c,d)\in X^{[4]}$.

\medskip
\begin{center}
\begin{tikzpicture}
\draw (0,0) circle [radius=3];
\shadedraw[inner color=orange, outer color=white, draw=black, shading=radial, shading angle=100]
	(-1,2.83) arc[start angle=370, end angle=321, radius=6] --
	(-2.25, -1.98) arc [start angle=320, end angle=421, radius=2] --
	(-2.81,1.05) arc [start angle=250, end angle=399, radius=.8] --
	(-1.915,2.31) arc [start angle=129.5, end angle=109.2, radius=3] -- cycle;
\draw (-1.55,-.1) node {$\mathcal R_1$};
\filldraw (-2.25, -1.98) circle [radius=1pt] node[below left] {$y_1$};
\filldraw (-1,2.83)  circle [radius=1pt] node[above] {$x_1$};
\filldraw (-1.915,2.31) node [above left]{$l_1$} circle [radius=.5pt];
\filldraw (-2.81,1.05) node [left]{$l_2$} circle [radius=.5pt];

\shadedraw[inner color=green, outer color=white, draw=black, shading=radial]
	(0,3) arc [start angle=360, end angle=318.9, radius=8] --
	(-1.975,-2.253) arc [start angle=140, end angle =40, radius=2.58] --
	(1.975,-2.253) arc [start angle=221.1, end angle=180, radius=8] -- cycle;
\filldraw (-1.975,-2.253) circle [radius=.5pt];
\filldraw (1.975,-2.253) circle [radius=.5pt];
\filldraw (0,3) circle [radius=.5pt];


\shadedraw[inner color=blue!50, outer color=white, draw=black, shading=radial]
	(1,2.83) arc [start angle=170, end angle=226.7, radius=5] --
	(2.49,-1.67) arc [start angle=326, end angle=340, radius=3] --
	(2.82,-1.026) arc [start angle=240, end angle=129, radius=1.5] --
	(2.628,1.447) arc [start angle=310, end angle=128, radius=.5] --
	(2,2.229) arc [start angle=48, end angle=70, radius=3] -- cycle;
	
\filldraw (1,2.83) circle [radius=1pt] node[above] {$y_2$};
\filldraw (2.49,-1.67)  circle [radius=1pt] node[below right]{$x_2$};
\filldraw (2.82,-1.026)  circle [radius=.5pt];
\filldraw (2.628,1.447)  circle [radius=.5pt];
\filldraw (2,2.229) circle [radius=.5pt];

\draw (1.8,-.15) node {$\mathcal R_2$};
\filldraw (-2.5,1.66) circle [radius=1pt] node[left] {$a$};
\filldraw (-2.95,-0.52) circle [radius=1pt] node[left] {$b$};
\filldraw (2.99,0.26) circle [radius=1pt] node[right] {$c$};
\filldraw (2.3,1.93) circle [radius=1pt] node[right] {$d$};

\draw (2.49,-1.67) to [out=145,in=145] (2.82,-1.026);

\filldraw (-1.5,1.2) circle [radius=1pt] node[below] {$p_1$};
\filldraw (1.5,1.2) circle [radius=1pt] node[below] {$p_2$};
\draw (-1.5,1.2) arc[start angle=248, end angle=292, radius=4];

\filldraw (-.8,2.89) circle [radius=1pt];
\draw (-.8,2.89) arc[start angle=370, end angle=319.1, radius=6];
\filldraw (-.6,2.93) circle [radius=1pt];
\draw (-.6,2.93) arc[start angle=369, end angle=317, radius=6];
\filldraw (-.3,2.98) circle [radius=1pt];
\draw (-.3,2.98) arc[start angle=366.5, end angle=316, radius=6.4];
\filldraw (.8,2.89) circle [radius=1pt];
\draw (.8,2.89) arc [start angle=170, end angle=229, radius=5];
\filldraw (.5,2.958) circle [radius=1pt];
\draw (.5,2.958) arc [start angle=174, end angle=225.7, radius=6];
\filldraw (.3,2.98) circle [radius=1pt];
\draw (.3,2.98) arc [start angle=176, end angle=225, radius=6.5];
\end{tikzpicture}
\end{center}

\noindent
Then
\bqn
d_\calX(\beta(\bRr_1),\beta(\bRr_2))
=d_\calX(\beta(a,b,d),\beta(a,c,d))
=[a,b,c,d]\,.
\eqn
Since the geodesics bounding $\bRr_1$ and $\bRr_2$ are all  $\mu$-short, 
we have 
\bqn
\mu(\{a\}\times\icc{b}{c})\leq \mu(\ioo{l_1}{l_2}\times\ioo{l_2}{l_1})=0\,,
\eqn
where $\{l_1,l_2\}$ is the geodesic in $\partial\bRr_1$ separating $a$
from $\bRr_1$, and similarly $\mu(\{d\}\times\icc{b}{c})=0$.
As a result, from
Proposition \ref{prop:Rm} (\ref{it: mu(int) leq cr leq mu(closure)})
we have the equality
\bqn
\mu(\icc{d}{a}\times\icc{b}{c})=[a,b,c,d]=\mu(\ioo{d}{a}\times\ioo{b}{c})\,.
\eqn
If now  $p_i\in\bRr_i$, the set of leaves in $\widetilde\calL$ that intersect
the segment $(p_1,p_2)$ is exactly the set of leaves in $\widetilde\calL$
that separate $\bRr_1$ from $\bRr_2$.
This is also the same as the set of leaves in $\widetilde\calL$ that connect $\icc{d}{a}$ to $\icc{b}{c}$.
The assertion then follows from the above considerations,
recalling that $d_{\mu}(\bRr_1,\bRr_2)$ is the measure of this set,
\bqn
d_\calX(\beta(\bRr_1),\beta(\bRr_2))=[a,b,c,d]=\mu(\icc{d}{a}\times\icc{b}{c})=d_{\mu}(\bRr_1,\bRr_2)\,.
\eqn
\end{proof}

\subsection{Framed actions on trees}\label{s.acttree}
Theorem \ref{thm:5.2} applies to framed actions on trees:
\begin{prop}
\label{thm: framed trees}  
Let $\rho\colon\Gamma\to\Isom(\calT)$ be an action by isometries
on an real tree $\calT$
with a framing $\varphi\colon X\to \partial_\infty\calT$. 
Suppose that the associated crossratio 
$[\,\cdot\,,\,\cdot\,,\,\cdot\,,\,\cdot\,]_\varphi$
is positive, and denote by
$\mu_\rho$ the associated geodesic current.
Then $\mu_\rho$ 
corresponds to a measured lamination, 
and there is a $\Gamma$-equivariant isometric embedding
\bqn
\calT(\mu_\rho)\hookrightarrow \calT
\eqn
In particular, for all hyperbolic $\gamma\in\Gamma$,
$$\ell_\calT(\rho(\gamma))=i(\mu_\rho,\gamma).$$
\end{prop}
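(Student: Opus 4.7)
The plan is to reduce to Theorem~\ref{thm:5.2}, verify its hypotheses using the two propositions immediately preceding, and then promote the resulting embedding of vertices to an embedding of the full tree. First, since $[\,\cdot\,,\,\cdot\,,\,\cdot\,,\,\cdot\,]_\varphi$ is positive by assumption and ultrametric by Proposition~\ref{prop: cr from tree is ultram}, Proposition~\ref{prop: ultram cr give lamin} gives directly that $\mu_\rho$ is of lamination type.

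Next I would define a barycenter map on $X$ by
\bqn
\beta(a,b,c) := \beta_\calT(\varphi(a),\varphi(b),\varphi(c)),
\eqn
with $\beta_\calT$ the standard tree barycenter recalled in Example~\ref{ex:trees}. The $\Gamma$-equivariance and $S_3$-invariance are inherited directly from $\varphi$ and $\beta_\calT$ respectively, while compatibility with $[\,\cdot\,,\,\cdot\,,\,\cdot\,,\,\cdot\,]_\varphi$ is a direct consequence of \eqref{eq: comp barycenter in a tree}, the positivity hypothesis being exactly what is needed to drop the absolute value. Theorem~\ref{thm:5.2} then produces a $\Gamma$-equivariant isometric embedding $\calV(\mu_\rho) \hookrightarrow \calT$. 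To promote this to an embedding $\calT(\mu_\rho) \hookrightarrow \calT$, I would use that the convex hull of $\calV(\mu_\rho)$ is all of $\calT(\mu_\rho)$ (every point of $X_\mu$, and hence of the minimal tree containing it, lies on a segment between two branching points) together with the fact that an isometric map into a real tree extends uniquely along geodesics; the extension is automatically $\Gamma$-equivariant by uniqueness.

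For the length formula, Proposition~\ref{prop:new4.10} already provides $\per(\gamma) = i(\mu_\rho, \delta_c)$, so it only remains to identify $\per(\gamma)$ with $\ell_\calT(\rho(\gamma))$ for hyperbolic $\gamma \in \Gamma$. Equivariance and injectivity of $\varphi$ force the two distinct points $\varphi(\gamma_-), \varphi(\gamma_+) \in \partial_\infty\calT$ to be fixed by $\rho(\gamma)$, so $\rho(\gamma)$ cannot be parabolic on $\calT$. Unwinding Example~\ref{ex:trees}, $\per(\gamma)$ equals the signed distance, on the oriented geodesic from $\varphi(\gamma_-)$ to $\varphi(\gamma_+)$, between the projections of $\varphi(x)$ and $\rho(\gamma)\varphi(x)$. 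When $\rho(\gamma)$ is hyperbolic this geodesic is its axis and the signed distance equals $\pm\ell_\calT(\rho(\gamma))$, with the sign forced to be positive by positivity of the crossratio; when $\rho(\gamma)$ is elliptic the geodesic from $\varphi(\gamma_-)$ to $\varphi(\gamma_+)$ lies inside the fixed subtree and is pointwise fixed by $\rho(\gamma)$, so the two projections coincide and the signed distance is $0 = \ell_\calT(\rho(\gamma))$.

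The main obstacle I anticipate lies in the last step, where one must reconcile the framing $\varphi$ with the fixed-point structure of $\rho(\gamma)$ in $\calT$ and carefully treat the elliptic case; by contrast the first two steps are essentially packaging of the machinery developed in \S\ref{sec:cr->gc} and \S\ref{sec:5}.
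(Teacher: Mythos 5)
Your first three steps — deducing lamination type from Propositions~\ref{prop: cr from tree is ultram} and \ref{prop: ultram cr give lamin}, building the barycenter $\beta = \beta_\calT\circ\varphi^{(3)}$ and applying Theorem~\ref{thm:5.2}, then extending the vertex embedding to $\calT(\mu_\rho)$ using unique geodesics — match the paper's argument.

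The final step, however, takes a different route and has a gap. The paper deduces $\ell_\calT(\rho(\gamma))=i(\mu_\rho,\gamma)$ \emph{from the embedding}: since $\Psi(\calT(\mu_\rho))$ is a $\Gamma$-invariant convex subtree of $\calT$, translation lengths agree on it and on $\calT$, and then one uses $\ell_{\calT(\mu_\rho)}(\gamma)=i(\mu_\rho,\delta_c)$ from \S\ref{s.treemu}. You instead try to verify $\per(\gamma)=\ell_\calT(\rho(\gamma))$ directly from the definition of the period, identifying $\varphi(\gamma_\pm)$ with fixed ends of $\rho(\gamma)$. That identification requires $\gamma_\pm\in X$, which is not part of the hypotheses: the framing $\varphi$ is only given on $X$, a dense $\Gamma$-invariant set that need not contain the fixed points of every hyperbolic $\gamma$. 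For such $\gamma$ the period is defined by the limit in Definition~\ref{d.period2}, and since $\varphi$ carries no continuity assumption you cannot pass to the limit to conclude $\varphi(s)\to\xi$, $\varphi(t)\to\eta$ for the ends $\xi,\eta$ of the axis (or fixed subtree) of $\rho(\gamma)$. The paper's route via the embedding sidesteps exactly this difficulty, and is the safer way to obtain the length formula for \emph{all} hyperbolic $\gamma$; your direct computation is fine as a sanity check when $\gamma_\pm\in X$, including the careful treatment of the elliptic case and the use of positivity to pin down the sign.
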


\begin{proof}
By 
Proposition \ref{prop: cr from tree is ultram},
the crossratio $[\,\cdot\,,\,\cdot\,,\,\cdot\,,\,\cdot\,]_\varphi$
is ultrametric, hence by 
Proposition \ref{prop: ultram cr give lamin}, the current
$\mu_\rho$ is of lamination type.

Now define the barycenter $\beta(x,y,z)$ 
of $(x,y,z)\in X^{(3)}$ as the
barycenter $\beta_\calT(\varphi(x),\varphi(y),\varphi(z))$
in the tree $\calT$ of 
$(\varphi(x),\varphi(y),\varphi(z))$. 
Then $\beta$ is by construction a equivariant  barycenter map compatible with
the crossratio (by \eqref{eq: comp barycenter in a tree} in Example \ref{ex:trees} and
$S_3$-invariance of $\beta_\calT$), 
hence induces an equivariant isometry
\bqn
\Psi\colon\calV(\mu_\rho)\hookrightarrow \calT
\eqn
of the vertices of the dual tree $\calT(\mu_\rho)$  
by Theorem \ref{thm:5.2}. 
Since $\calT$ is uniquely geodesic, we can extend  $\Psi$ to $\calT(\mu_\rho)$.
Then for all  $\gamma\in\Gamma$ we have 
$\ell_\calT(\rho(\gamma))=\ell_{\Psi(\calT(\mu_\rho))}(\gamma)$ 
as $\Psi(\calT(\mu_\rho))$ is a convex subset of $\calT$,
and 
$\ell_{\Psi(\calT(\mu_\rho))}(\gamma)=\ell_{\calT(\mu_\rho)}(\gamma)$ 
as $\Psi$ is isometric.
Now for hyperbolic $\gamma$ representing a closed geodesic $c$
we have $\ell_{\calT(\mu_\rho)}(\gamma)=i(\mu_\rho,\delta_c)$,
%
%
hence
$\ell_\calT(\rho(\gamma))=i(\mu_\rho,\gamma)$.
\end{proof}

\section{The geometry of the Siegel spaces over real closed fields}\label{sec:max_repr_van_sys}

The goal of this section is to recall facts about the geometry of Siegel spaces over real closed fields 
needed to show that maximal framed actions give rise to a positive crossratio and admit a compatible barycenter 
(Lemma \ref{lem:preconseq-of-causal} and Proposition \ref{prop: barycenter is symmetric}). 

\subsection{Real closed fields}\label{subsec:rcf}
Recall that an ordered field is a field $\bF$ endowed with a total order relation $\leq$ satisfying:
\be
\item if $x\leq y$ then  $x+z\leq y+z$ for all $z\in\bF$;
\item if $0\leq x$ and $0\leq y$, then $0\leq xy$.
\ee
The fields $\bQ$ and $\bR$ with their usual order are examples; 
while some fields admit no ordering, like $\bC$, others admit many, like $\bR(X)$.

\begin{example}  The orders on $\bR(X)$ admit the following description for $\epsilon\in\{-,+\}$:
\be
\item  $>_{\epsilon\infty}$:  if $f\in\bR(X)$, $f>_{\epsilon\infty}0$
if $f(t)>0$ for $t\to\epsilon\infty$.
\item $>_{a\epsilon}$ for $a\in\bR$:  we say that $f>_{a+}0$ if there exists $\eta=\eta(f)\in\bR$, $\eta>0$, with  $f(t)>0$ on the interval $(a,a+\eta)$,
and $f>_{a-}0$ if $f(t)>0$ on the interval $(a-\eta,a)$.
\ee
\end{example}

A basic fact is that every ordered field $\bF$ admits a real closure $\overline{\bF}^\mathrm{r}$,
that is a maximal algebraic extension of $\bF$ to which the order extends.  
Such a real closure is then unique up to a unique $\bF$-isomorphism.
An ordered field $\bF$ is then {\em real closed} if the ordering does not extend to any proper algebraic extension.
Two useful characterizations are the following:
\be
\item the field $\bF$ is ordered and $\bF(\imath)$ is algebraically closed, with $\imath=\sqrt{-1}$;
\item the field $\bF$ is ordered, every positive element is a square and any odd degree polynomial has a root. 
\ee

Real closed fields have the same first order logic as the field $\bR$ of the reals.
An important consequence that is implicit in most of the geometric properties of the Siegel space we use,
is that any symmetric matrix with coefficient in a real closed field is orthogonally similar to a diagonal one.

An \emph{$\bR$-valued valuation} is a map 
$$v\colon\bF\to\bR\cup\{\infty\},$$
where $v\colon(\bF^\times,\cdot)\to(\bR,+)$ is a group homomorphism,  $v(0)=\infty$, and,
if we define the \emph{norm} of $x\in\bF$ by $\|x\|_v:=e^{-v(x)}$ if
$x\neq 0$ and  $\|0\|_v:=0$, then $$\|x+y\|_v\leq\|x\|_v+\|y\|_v.$$
An important valuation to keep in mind is $-\ln(|\cdot|)\colon\bR\to \bR\cup\{\infty\}$. 
Indeed valuations offer a replacement for the logarithm in more general real closed fields. 
The valuation is {\em order compatible} if $|x|\leq|y|$ implies that $v(x)\geq v(y)$,
and it is {\em non-Archimedean} if $v(x+y)\geq\min\{v(x),v(y)\}$, 
that is if $\|x+y\|_v\leq\max\{\|x\|_v,\|y\|_v\}$.

\begin{example}\label{e.real closed} The following are examples of ordered fields:
\be
\item The field $\bR$ of real numbers and the field $\overline{\bQ}^r$ of real algebraic numbers;
both are real closed.
\item Let $\omega$ be a non-principal ultrafilter on $\bN$.  The
  quotient $\bR_\omega$ of the ring $\bR^\bN$ by the 
equivalence relation $(x_n)\sim(y_n)$ if $\omega(\{n:\,x_n=y_n\})=1$,
ordered in such a way that  positive elements are the classes of the sequences
$(x_n)$ such that $\omega(\{n:\,x_n> 0\})=1$, is a real closed field called  the field of the {\em
  hyperreals}.  It does not admit any order compatible $\bR$-valued valuation.

\item Let $\sigma\in\bR_\omega$ be a positive infinitesimal, that is
$\sigma$ can be represented by a sequence $(\sigma_n)_{n\geq0}$
with $\lim\sigma_n=0$ and $\sigma_n>0$.  Then 
\bqn
\Oo_\sigma:=\left\{x\in\bR_\omega:\,|x|<\sigma^{-k}\text{ for some }k\in\bZ\right\}
\eqn
is a valuation ring with maximal ideal 
\bqn
\Ii_\sigma=\left\{x\in\bR_\omega:\,|x|<\sigma^{k}\text{ for all }k\in\bZ\right\}\,.
\eqn
The quotient $\bR_{\omega,\sigma}:=\Oo_\sigma/\Ii_\sigma$ is a real closed field, called the {\em Robinson field}.
It admits an order compatible valuation
\bqn
v(x)=\lim_\omega\frac{\ln|x_n|}{\ln\sigma_n}\,,
\eqn
where $(x_n)_{n\geq0}$ represents $x$,
that leads to a non-Archimedean norm
\bqn
\|x\|_v:=e^{-v(x)}\,.
\eqn
\item Let $G$ be a totally ordered Abelian group and let
\bqn
\mathcal{H}(G):=
\left\{f=\sum_{\sigma\in G} a_\sigma X^\sigma:\,a_\sigma\in\bR \text{ and }\supp(f)\subset G\text{ is well ordered}\right\}
\eqn
be the set of formal power series with exponents in $G$ and coefficients in $\bR$, 
where $\supp(f):=\{\sigma\in G:\,a_\sigma\neq0\}$.
This is an $\bR$-vector space and the restriction on supports allows one to define
a ring structure that extends the ordinary multiplication on the group ring $\bR[G]$ of $G$.
In fact $\calH(G)$ turns out to be a field, called the {\em Hahn field with exponents $G$}.
It is ordered by setting  $f>0$ if $a_{\sigma_0}>0$, where $\sigma_0=\min(\supp(f))$
and it admits a $G$-valued compatible valuation $v(f)=\sigma_0$.
Moreover $\mathcal{H}(G)$ is real closed if $G$ is divisible.  
For all of the above statements, see \cite[Theorem~2.15]{Dales_Woodin}.

\item  (Compare with \S~\ref{ex:strubel})  If $\alpha\in\bR\smallsetminus\bQ$, then the ring morphism 
\bqn
\ba
\bR[x,y]&\longrightarrow\,\,\,\mathcal{H}(\bR)\\
P\quad&\longmapsto P(x,x^\alpha)
\ea
\eqn
extends to $\bR(x,y)$.  In this way we obtain an order on $\bR(x,y)$ for which
$P\in\bR(x,y)$ is positive if and only if for some $\epsilon>0$, $P(t,t^\alpha)>0$ for all $t\in(0,\epsilon).$
\ee
\end{example}

\subsection{The Siegel upper half space and the space $\calB_n^\bF$}\label{subsec:Siegel} 
Let $\bF$ be a real closed field, and  $\imath$ be a square root of $-1$.
Endow $V=\bF^{2n}$ with the standard symplectic form 
\bqn
\left\langle\begin{pmatrix}x_1\\y_1\end{pmatrix},\begin{pmatrix}x_2\\y_2\end{pmatrix}\right\rangle:={}^tx_1y_2- {}^ty_1x_2\,,
\eqn
where $x_i,y_i\in\bF^n$. 
The vector space $\Sym_n(\bF)$ of symmetric matrices  
admits a partial order defined by setting 
\bqn
X\ll Y\quad \text{ if }Y-X\text{ is positive definite.}
\eqn
The {\em Siegel upper half space} is the semialgebraic set
\bqn
\snf:=\{Z=X+\imath Y:\,X,Y\in\Sym_n(\bF)\text{ and }Y\gg0\}
\eqn
on which 
\bqn
\Sp(2n,\bF):=\left\{g=\begin{pmatrix}A&B\\C&D\end{pmatrix}:\,\begin{array}{l}{^t\!A}D-{}^tCB=\Id,\,\\{}^t\!AC={}^tCA,\,\\{}^tBD={}^tDB\end{array}\right\}
\eqn
acts by fractional linear transformations
\bq\label{e.flt}
g_\ast Z:=(AZ+B)(CZ+D)^{-1}\,,
\eq
transitively.  Of course this action descends to an action of $\PSp(2n,\bF)$.
The stabilizer of $\imath \Id_n\in\snf$ in $\Sp(2n,\bF)$ is
\bqn
K=\Sp(2n,\bF)\cap \operatorname{O}(2n)
=\left\{\begin{pmatrix}\hphantom{-}A&B\\-B&A\end{pmatrix}:\,{}^tAA+{}^tBB=\Id_n,\,{}^tAB\text{ is symmetric}\right\}\,.
\eqn
%

If $\bF=\bR$, then $\snr$ is the symmetric space associated to $\PSp(2n,\bR)$.  
If, instead, the real closed field $\bF$ is endowed with an order compatible non-Archimedean valuation $v$, 
then $\PSp(2n,\bF)$ acts by isometries on a  $v(\bF)$-metric space $\calB_n^\bF$: a metric quotient of $\snf$ whose construction we now recall. See \cite{BIPP-ann,BIPP-RSPEC} for generalizations of this construction.

On $\snf$ we define an multiplicative $\bF$-valued distance function  as follows.
Since $\bF$ is real closed, any pair $(Z_1,Z_2)$ with $Z_i\in\snf$, for $i=1,2$, is $\PSp(2n,\bF)$-congruent to a unique pair
$(\imath \,\Id_n,\imath D)$, where $D=\diag(d_1,\dots,d_n)$,
$d_1\geq\cdots\geq d_n\geq 1$ in $\bF$.
We then set 
\bqn
D(Z_1,Z_2):=\prod_{i=1}^n d_i.
\eqn

\begin{prop}
  $D$ is  a $\PSp(2n,\bF)$-invariant
  multiplicative distance function on $\snf$, namely, for all $Z_1,Z_2,Z_3\in\snf$,
  \begin{description}
  \item[(MD1)] $D(Z_1,Z_2)\in \bF_{\geq 1}$, with equality of and
    only if $Z_1=Z_2$ ;
  \item[(MD2)] $D(Z_1,Z_2)=D(Z_2,Z_1)$ ;
  \item[(MD3)] $D(Z_1,Z_2)\leq D(Z_1,Z_3)D(Z_3,Z_2)$ .
  \end{description}
\end{prop}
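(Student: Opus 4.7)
The plan is to verify each of (MD1), (MD2), (MD3) separately, taking as given the existence and uniqueness of the normal form $(\imath\Id_n,\imath D)$ stated just above the proposition. The $\PSp(2n,\bF)$-invariance of $D$ is immediate, since the pairs $(Z_1,Z_2)$ and $(gZ_1,gZ_2)$ are $\PSp(2n,\bF)$-congruent and therefore share the same normal form. Condition (MD1) is essentially by construction: $D(Z_1,Z_2)=\prod_{i=1}^n d_i\geq 1$ because each $d_i\geq 1$, and equality forces all $d_i=1$, so that the normal form is $(\imath\Id_n,\imath\Id_n)$; this means there exists $g\in\PSp(2n,\bF)$ with $gZ_1=gZ_2=\imath\Id_n$, whence $Z_1=Z_2$, as $g$ acts by bijection on $\snf$.

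For (MD2), the plan is to exhibit explicit symplectic elements carrying $(Z_2,Z_1)$ to the same normal form as $(Z_1,Z_2)$. First, the $g\in\Sp(2n,\bF)$ that normalizes $(Z_1,Z_2)$ sends $(Z_2,Z_1)$ to $(\imath D,\imath\Id_n)$. Since $\bF$ is real closed and each $d_i>0$, the diagonal matrix $D^{1/2}\in\GL(n,\bF)$ is defined, and
\[
h:=\bpm D^{-1/2} & 0 \\ 0 & D^{1/2}\epm\in\Sp(2n,\bF)
\]
acts on $\snf$ by $Z\mapsto D^{-1/2}ZD^{-1/2}$, taking $(\imath D,\imath\Id_n)$ to $(\imath\Id_n,\imath D^{-1})$. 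Finally the ``Cayley'' element $J:=\bpm 0 & -\Id_n \\ \Id_n & 0\epm\in\Sp(2n,\bF)$ acts by $Z\mapsto -Z^{-1}$ and carries $(\imath\Id_n,\imath D^{-1})$ to $(\imath\Id_n,\imath D)$, which is in normal form with the same diagonal $D$. Uniqueness then yields $D(Z_2,Z_1)=D(Z_1,Z_2)$.

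For (MD3) the plan is to invoke the Tarski--Seidenberg transfer principle for the complete theory of real closed ordered fields. The predicate $D(Z_1,Z_2)=d$ is defined by the first-order formula asserting the existence of $g\in\Sp(2n,\bF)$ and of $d_1\geq\cdots\geq d_n\geq 1$ in $\bF$ such that $gZ_1=\imath\Id_n$, $gZ_2=\imath\diag(d_1,\dots,d_n)$ and $\prod d_i=d$; hence the triangle inequality $D(Z_1,Z_2)\leq D(Z_1,Z_3)\,D(Z_3,Z_2)$ is a first-order sentence in the language of ordered fields. Over $\bR$, the function $\log D$ is precisely the Finsler distance on the symmetric space $\snr$ associated to the Weyl-invariant $\ell^1$-norm on the Cartan subalgebra of $\Sp(2n,\bR)$; its translation length along a hyperbolic $g$ equals $\sum\log|\lambda_i|$, agreeing (up to the factor $2$) with the length function $L$ recalled in the introduction, and its triangle inequality is classical. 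Since the inequality is first-order and holds over $\bR$, it transfers to any real closed field $\bF$.

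The main obstacle is (MD3): a direct, field-independent proof would require either constructing explicit geodesic segments in $\snf$ (awkward in the absence of a Riemannian structure over a general $\bF$) or finding a polynomial-determinantal formula for $D(Z_1,Z_2)$ whose submultiplicativity in the middle variable can be checked by direct matrix manipulations. The transfer argument sidesteps both by exploiting the first-order definability of the normal form and reducing to the classical real case, which is exactly what the previous proposition---asserting orthogonal diagonalization of symmetric matrices over real closed fields---is engineered to allow.
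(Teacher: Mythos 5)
Your treatment of (MD1), (MD2), and the $\PSp(2n,\bF)$-invariance is fine; the paper declares (MD1) and (MD2) ``clear'', and your explicit reduction of $(Z_2,Z_1)$ to normal form via $g$, the dilation $h=\bsm D^{-1/2}&0\\0&D^{1/2}\esm$, and the Cayley element $J$ is a sound way to make (MD2) precise (each step is easily checked against $g_\ast Z=(AZ+B)(CZ+D)^{-1}$). Where you diverge is (MD3). The paper gives a short, self-contained, field-uniform argument: it considers the action of $\Sp(2n,\bF)$ on $W=\wedge^n\bF^{2n}$, observes that for $a=\diag(a_1,\dots,a_n,a_1^{-1},\dots,a_n^{-1})$ with $a_1\geq\cdots\geq a_n\geq 1$ the operator norm of $a$ on $W$ is $\prod a_i$, uses the $KAK$ decomposition to get $D(\imath\Id_n, g_\ast\imath\Id_n)=|||g|||^2$, and concludes (MD3) from submultiplicativity of the operator norm plus transitivity. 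You instead invoke the Tarski--Seidenberg transfer principle, noting that (MD3) is expressible as a first-order sentence over ordered fields and holds over $\bR$. Your route is logically valid (and you are right that the normal-form relation $D(Z_1,Z_2)=d$ is first-order definable, with uniqueness of normal forms making it functional), but it is somewhat circuitous: one must still prove the triangle inequality over $\bR$, and the cleanest way to do that is precisely the operator-norm/Cartan argument the paper gives directly over any real closed field, making the model-theoretic detour unnecessary. Your appeal to ``Finsler distance\ldots its triangle inequality is classical'' over $\bR$ is defensible but hides the nontrivial input (convexity/Weyl-invariance of the $\ell^1$-norm and subadditivity of the Cartan projection), whereas the paper's exterior-power trick isolates exactly that input in elementary terms. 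One small correction: orthogonal diagonalization of symmetric matrices over real closed fields, mentioned just before the proposition, is what guarantees the existence and uniqueness of the normal form $(\imath\Id_n,\imath D)$; it is not specifically what licenses the transfer argument, which rests on the general completeness of the theory of RCFs.
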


\begin{proof} (MD1) and (MD2) are clear.
We consider the standard action of $\Sp(2n,\bF)$ on $W=\wedge^n(\bF^{2n})$,
endowed with the standard scalar product,
which is $K$-invariant as $K\subset \operatorname{O}(2n)$.
For
$a=\diag(a_1,\dots,a_n,a_1^{-1},\dots,a_n^{-1})$ with
$a_1\geq\cdots\geq a_n\geq 1$ in $\bF$, we easily see that
\[\max_{w\in W, w\neq 0}\frac{||aw||}{||w||}= \prod_{i=1}^n a_i\]
(the biggest eigenvalue of $a$ in $W$).
For $g \in \Sp(2n,\bF)$
the operator norm of $g$ on $W$ is given by
\bqn
|||g||| := \max_{w\in W, w\neq 0}\frac{||gw||}{||w||}\,.
\eqn
Since $g=kak'$ for some $k,k'$ in $K$ and $a$ as before (by the Cartan
decomposition), and 
\bqn
D(\imath \Id_n, g_\ast \imath \Id_n)=D(\imath \Id_n, a_\ast \imath
\Id_n)= 2|||a|||= 2|||g|||\,,
\eqn
(MD3) follows from submultiplicativity of the operator norm and transitivity of the action of $\Sp(2n,\bF)$.
\end{proof}

On $\snf$ we define an \emph{associated pseudo-distance} $d^1$ as follows.
\begin{equation}\label{e.pdist}
d^1(Z_1,Z_2):=-v(D(Z_1,Z_2)=-\sum_{i=1}^n v(d_i).
\end{equation}
The triangle inequality for $d^1$ comes from (MD3).
We denote by ${\calB_n^\bF}$ the metric quotient of $\snf$ with respect to the pseudo-distance $d^1$.
We will not need it, but note in passing that ${\calB_n^\bF}$ can be identified with  the quotient
$\PSp(2n,\bF)/\PSp(2n,\bU)$, where $\bU:=\{x\in\bF:\,\|x\|_v\leq1\}$.

\subsection{Embedding in $\bK$-Lagrangians}\label{subsubsec:projective-model}
In the classical case, the Borel embedding of $\snr$ into the complex Grassmannian
provides a way to endow $\snr$ with structures defined on the Grassmannian,
such as, for example, a crossratio.  We recall from \cite{BP} the analogous picture in the case
of a general real closed field.

Let $\bK=\bF(\imath)$ be the algebraic closure of $\bF$ and 
let us also denote by $\langle\,\,,\,\,\rangle$ the $\bK$-linear extension of the
standard symplectic form to $\bK^{2n}$
and by 
$\sigma\colon\bK^{2n}\to\bK^{2n}$ the complex conjugation. 
Given matrices $Z_1,Z_2\in M_n(\bK)$, 
we will denote by $\left\langle\begin{pmatrix}Z_1\\Z_2\end{pmatrix}\right\rangle$
the subspace of $\bK^{2n}$ generated by the column vectors.  
We denote by $\calL(\bK^{2n})$ the submanifold of ${\rm Gr}_n(\bK^{2n})$ consisting of subspaces that are isotropic for the form $\langle\,\,,\,\,\rangle$.
The map
\bqn
\ba
\Sym_n(\bK)&\longrightarrow\,\,\,\calL(\bK^{2n})\\
Z\qquad&\longmapsto\left\langle\begin{pmatrix}Z\\\Id_n\end{pmatrix}\right\rangle
\ea
\eqn
gives a 
bijection between $\Sym_n(\bK)$
and the subset of $\calL(\bK^{2n})$ of all Lagrangians transverse to $\ell_\infty:=\left\langle\begin{pmatrix}\Id_n\\0\end{pmatrix}\right\rangle$.
This map intertwines the action of $\PSp(2n,\bK)$ on $\Sym_n(\bK)$ by fractional linear transformations \eqref{e.flt} and the standard action on $\calL(\bK^{2n})$. 
This bijection maps $\snf$ to the projective model
\bqn
\Dd_\bF:=\{L\in\calL(\bK^{2n}):\,-\imath\langle\,\cdot\,,\sigma(\,\cdot\,)\rangle|_{L\times L}\gg0\}
\eqn
and sends $\Sym_n(\bF)$ to 
\bqn
\{\ell\otimes \bK:\,\ell\in\calL(\bF^{2n}),\,\ell\text{ is  transverse to }\ell_\infty\}\,.
\eqn

\subsection{Maximal triples and intervals}\label{subsubsec:max-tr-quadr}
We associate to a triple $(\ell_1,\ell_2,\ell_3)$ of pairwise
transverse Lagrangians in $\calL(\bF^{2n})$
the quadratic form $Q_{(\ell_1,\ell_2,\ell_3)}$ on $\ell_1$
defined by 
\bqn
Q_{(\ell_1,\ell_2,\ell_3)}(v):=\langle v,v'\rangle\,,
\eqn
where $v'\in\ell_3$ is the unique vector such that $v+v'\in\ell_2$.  
If $\ell=\left\langle\begin{pmatrix}X\\\Id_n\end{pmatrix}\right\rangle$ and 
$\ell'=\left\langle\begin{pmatrix}X'\\\Id_n\end{pmatrix}\right\rangle$
are pairwise transverse,
then in the coordinates
\begin{equation}\label{e.frame}
\ba
\bF^n&\longrightarrow \ell\\
w&\mapsto\begin{pmatrix}X\\\Id_n\end{pmatrix}w
\ea
\end{equation}
the quadratic form $Q_{(\ell,\ell',\ell_\infty)}$ is represented by $X'-X$.

Two triples of pairwise transverse Lagrangians $(\ell_1,\ell_2,\ell_3)$ and $(m_1,m_2,m_3)$ are 
$\PSp(2n,\bF)$-congruent if and only if the quadratic spaces
$(\ell_1,Q_{(\ell_1,\ell_2,\ell_3)})$ and $(m_1,Q_{(m_1,m_2,m_3)})$
are isomorphic or equivalently if and only if
$Q_{(\ell_1,\ell_2,\ell_3)}$ and $Q_{(m_1,m_2,m_3)}$ have the same signature \cite[Proposition 2.5]{BP}.
The value of the {\em Maslov cocycle} on $(\ell_1,\ell_2,\ell_3)$ is the signature of $Q_{(\ell_1,\ell_2,\ell_3)}$
\bqn
\tau(\ell_1,\ell_2,\ell_3):=\sign Q_{(\ell_1,\ell_2,\ell_3)}\,.
\eqn
The triple $(\ell_1,\ell_2,\ell_3)$ is {\em maximal} if $\tau(\ell_1,\ell_2,\ell_3)=n$,
the maximal value the Maslov cocycle can take. Similarly we say that a triple $(\ell_1,\ell_2,\ell_3)$ is {\em minimal} if $\tau(\ell_1,\ell_2,\ell_3)=-n$; it is easy to verify that $(\ell_1,\ell_2,\ell_3)$ is maximal if and only if $(\ell_2,\ell_1,\ell_3)$ is minimal.
The group $\PSp(2n,\bF)$ acts transitively on pairs of transverse Lagrangians in $\calL(\bF^{2n})$
and on maximal triples.  

Given $\ell,\ell'\in\calL(\bF^{2n})$, we define the \emph{interval}
\bqn
\ioo{\ell}{\ell'}:=\{m\in\calL(\bF^{2n}):\,(\ell, m,\ell')\text{ is maximal}\}\,.
\eqn

\begin{lem}[{\cite[Lemma 2.10]{BP}}]\label{lem:order}  
Let $\ell=\left\langle\begin{pmatrix}X\\\Id_n\end{pmatrix}\right\rangle$ and 
$\ell'=\left\langle\begin{pmatrix}X'\\\Id_n\end{pmatrix}\right\rangle$.
If the triple $(\ell,\ell',\ell_\infty)$ is maximal,
then 
\bqn
\ioo{\ell}{\ell'}=\left\{\left\langle\begin{pmatrix}Y\\\Id_n\end{pmatrix}\right\rangle:\,X\ll Y\ll X'\right\}\,.
\eqn
\end{lem}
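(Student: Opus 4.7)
I would prove the two inclusions separately, both starting from the explicit computation of the Maslov form in the affine chart dual to $\ell_\infty$.

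For the inclusion ``$\supseteq$'', take $Y$ symmetric with $X\ll Y\ll X'$ and set $m:=\left\langle\begin{pmatrix}Y\\\Id_n\end{pmatrix}\right\rangle$. Pairwise transversality among $\ell,\ell',m$ is automatic, since $Y-X$ and $X'-Y$ are positive definite and hence invertible. To compute the matrix of $Q_{(\ell,m,\ell')}$ in the coordinates $w\mapsto\binom{Xw}{w}$ of \eqref{e.frame}, for $v=\binom{Xw}{w}\in\ell$ I would solve for the unique $v'=\binom{X'w'}{w'}\in\ell'$ with $v+v'\in m$; this yields $w'=(X'-Y)^{-1}(Y-X)w$. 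Plugging into $Q(v)=\langle v,v'\rangle$ and using the sign normalization fixed by the formula that $Q_{(\ell,\ell',\ell_\infty)}$ is represented by $X'-X$ recalled in \S\ref{subsubsec:max-tr-quadr}, the form $Q_{(\ell,m,\ell')}$ is represented by the symmetric matrix $M:=R(R-P)^{-1}P$, with $R:=X'-X\gg0$ and $P:=Y-X$. Since $R\gg 0$ has a positive-definite square root $R^{1/2}$ over $\bF$, setting $\tilde P:=R^{-1/2}PR^{-1/2}$ one computes that $R^{-1/2}MR^{-1/2}$ is congruent to $\tilde P(\Id_n-\tilde P)^{-1}$; diagonalizing $\tilde P$ and examining the scalar function $t\mapsto t/(1-t)$, positivity of $M$ is equivalent to $0\ll\tilde P\ll\Id_n$, which by pushing $R^{1/2}$ back through the congruence reads $X\ll Y\ll X'$. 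Hence $(\ell,m,\ell')$ is maximal.

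For the inclusion ``$\subseteq$'' the main step is to show that every $m\in\ioo{\ell}{\ell'}$ is transverse to $\ell_\infty$; granted this, $m$ lies in the affine chart and the previous calculation produces a symmetric $Y$ with $X\ll Y\ll X'$. To prove transversality I would invoke the cocycle identity satisfied by the Maslov form on any $4$-tuple of Lagrangians,
\bqn
\tau(\ell,m,\ell')-\tau(\ell,m,\ell_\infty)+\tau(\ell,\ell',\ell_\infty)-\tau(m,\ell',\ell_\infty)=0,
\eqn
where $\tau$ is defined on arbitrary triples via Kashiwara's signature. Since $\tau(\ell,m,\ell')=\tau(\ell,\ell',\ell_\infty)=n$ by hypothesis and $|\tau|\leq n$ always, both remaining terms must equal $n$. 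Attaining the extremal value $n$ forces the underlying Kashiwara form to be non-degenerate of full rank $n$, which is equivalent to pairwise transversality of the three Lagrangians entering it; in particular $m$ is transverse to $\ell_\infty$.

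The main obstacle will be justifying the last step over a real closed field $\bF$ rather than over $\bR$. The extension of $\tau$ to possibly degenerate triples, together with the equivalence ``$\tau=n$ $\Leftrightarrow$ triple pairwise transverse and form definite'', is classical over $\bR$ but has to be imported to $\bF$ either by an algebraic bookkeeping on the Kashiwara form or by Tarski transfer. An alternative route avoiding this is to exploit the $\PSp(2n,\bF)$-equivariance of $\ioo{\,\cdot\,}{\,\cdot\,}$ together with transitivity of $\PSp(2n,\bF)$ on maximal triples to normalize $(\ell,\ell',\ell_\infty)$ to a fixed model (for instance with $X=-\Id_n$, $X'=\Id_n$), parameterize a general Lagrangian $m$ by a $2n\times n$ matrix in column echelon form, and verify by a direct case analysis on the rank drop against $\ell_\infty$ that candidate Lagrangians failing to be transverse to $\ell_\infty$ produce a non-positive-definite $Q_{(\ell,m,\ell')}$ and hence lie outside $\ioo{\ell}{\ell'}$.
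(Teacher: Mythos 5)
The paper itself cites [BP, Lemma~2.10] without giving a proof, so there is no internal argument to compare against; I will only assess your reconstruction. Your treatment of the inclusion $\supseteq$ is correct: with $R := X' - X$, $P := Y - X$ and $\tilde P := R^{-1/2}PR^{-1/2}$, one has $R^{-1/2}MR^{-1/2} = \tilde P(\Id_n - \tilde P)^{-1}$ (using that over a real closed field positive definite symmetric matrices have positive definite square roots and symmetric matrices are orthogonally diagonalizable), and the eigenvalue analysis of $t\mapsto t/(1-t)$ gives $M\gg0$ iff $0\ll\tilde P\ll\Id_n$ iff $X\ll Y\ll X'$. This equivalence also finishes $\subseteq$ once transversality of $m$ with $\ell_\infty$ is established.

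The remaining step, that every $m\in\ioo{\ell}{\ell'}$ is transverse to $\ell_\infty$, is where your two suggestions should be sharpened. The Kashiwara-cocycle route does work, and your worry about the passage to $\bF$ is not actually an obstacle: the cocycle identity, the bound $|\tau|\leq n$, and the implication ``$\tau = n$ implies pairwise transversality'' are all first-order sentences about signatures of forms whose Gram matrices are polynomial in the matrix entries of the Lagrangians, so they transfer from $\bR$ to every real closed $\bF$ by Tarski--Seidenberg; you should say this explicitly rather than leave it as an open concern. Your alternative route can be made cleaner and shorter, with no echelon-form case analysis: normalize $(X,X') = (0,\Id_n)$ and write $m = \left\langle\begin{pmatrix}A\\ B\end{pmatrix}\right\rangle$. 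Since $(\ell,m,\ell')$ is maximal, $m$ is in particular transverse to $\ell$ and to $\ell'$, so $A$ and $B-A$ are invertible, and in the affine coordinate on $\ell=\ell_0$ the form $Q_{(\ell,m,\ell')}$ is represented (with the sign convention of \S\ref{subsubsec:max-tr-quadr}) by $S := A(B-A)^{-1}$, which is positive definite. Then $SB = (\Id_n+S)A$, hence $B = (\Id_n+S^{-1})A$; since $S\gg 0$ gives $\Id_n+S^{-1}\gg 0$ and $A$ is invertible, $B$ is invertible, which is precisely transversality of $m$ with $\ell_\infty$. One then sets $Y := AB^{-1}$ and the computation from the first inclusion yields $X\ll Y\ll X'$.
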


For $X,X'\in\Sym_n(\bF)$ with $X\ll X'$, 
we will also denote by $\ioo{X}{X'}$ the set
\bqn
\ioo{X}{X'}:=\{Y\in\Sym_n(\bF):\,X\ll Y\ll X'\}
\eqn
and set
\bqn
\ioo{X}{\infty}:=\{Y\in\Sym_n(\bF):\,X\ll Y\}\,.
\eqn

\subsection{Crossratios}\label{subsubsec:crossratio}
In this subsection we recall the endomorphism valued
crossratio from \cite[\S~4.1]{BP} on quadruples of Lagrangians. 
This, together with a maximal framing, will allow us in \S~\ref{subsec:6.1} to associate to any maximal framed representation $\rho$ a positive crossratio as  in \S~\ref{sec:poscr}.

Given a quadruple of Lagrangians $(\ell_1,\ell_2,\ell_3,\ell_4)$ in $\calL(\bF^{2n})$ with $\ell_1\cap\ell_2=\ell_3\cap\ell_4=\{0\}$,
their \emph{crossratio} is the endomorphism of $\ell_1$ given by 
\bqn
R(\ell_1,\ell_2,\ell_3,\ell_4)=\p_{\ell_1}^{\|\ell_2}\circ \p_{\ell_4}^{\|\ell_3}|_{\ell_1}\,,
\eqn
where $\p_{\ell_j}^{\|\ell_i}$ denotes the projection of $\bF^{2n}$ to $\ell_j$ parallel to the complementary space $\ell_i$.
One verifies that for all $g\in\Sp(2n,\bF)$
\bqn
R(g\ell_1,g\ell_2,g\ell_3,g\ell_4)=gR(\ell_1,\ell_2,\ell_3,\ell_4)g^{-1}
\eqn
and hence
\bqn
\det R(\ell_1,\ell_2,\ell_3,\ell_4)
\eqn
is $\PSp(2n,\bF)$-invariant.
If, for $j\in\{1,2,3,4\}$, $\ell_j\cap\ell_\infty=\{0\}$, then $\ell_j=\left\langle\begin{pmatrix}X_j\\\Id_n\end{pmatrix}\right\rangle$
for some  $X_j\in\Sym_n(\bF)$. By \cite[Lemma~4.2]{BP}, the matrix representing $R(\ell_1,\ell_2,\ell_3,\ell_4)$ in the basis of $\ell_1$ given by \eqref{e.frame} 
 is 
\begin{equation}\label{eqn:Rinfty}
(X_1-X_2)^{-1}(X_2-X_4)(X_3-X_4)^{-1}(X_1-X_3)\,.
\end{equation}
We will denote such matrix $R(\ell_1,\ell_2,\ell_3,\ell_4)$, with an abuse of notation.

\begin{prop}[{\cite[Lemma 4.4]{BP}}]\label{lem:crBP}  Let $\ell_1,\ell_2,\ell_3,\ell_4,\ell_5$ be pairwise transverse Lagrangians.
Then:
\be
\item 
$
R(\ell_1,\ell_2,\ell_4,\ell_5)=R(\ell_1,\ell_2,\ell_3,\ell_5)R(\ell_1,\ell_3,\ell_4,\ell_5)\,.
$
\item 
$R(\ell_1,\ell_2,\ell_3,\ell_4)\text{ is conjugate to } R(\ell_3,\ell_4,\ell_1,\ell_2)\,.
$
\item 
$
\det R(\ell_1,\ell_2,\ell_4,\ell_5)=\det R(\ell_1,\ell_2,\ell_3,\ell_5)\det R(\ell_1,\ell_3,\ell_4,\ell_5)
$  and\\
$
\det R(\ell_1,\ell_2,\ell_3,\ell_4)=\det R(\ell_3,\ell_4,\ell_1,\ell_2)\,.
$
\item If $(\ell_1,\ell_2,\ell_3,\ell_4)$ is a maximal quadruple, 
then all eigenvalues of $R(\ell_1,\ell_2,\ell_3,\ell_4)$ belong to $\bF$ and are strictly larger than one.  
In particular $\det R(\ell_1,\ell_2,\ell_3,\ell_4)>1$.
\ee
\end{prop}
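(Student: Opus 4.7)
My plan proceeds step by step through the four claims, with the bulk of the work going into (4).

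For (1) I would work intrinsically from $R(\ell_a,\ell_b,\ell_c,\ell_d) = \p_{\ell_a}^{\|\ell_b}\circ \p_{\ell_d}^{\|\ell_c}|_{\ell_a}$. Expanding $R(\ell_1,\ell_2,\ell_3,\ell_5)\circ R(\ell_1,\ell_3,\ell_4,\ell_5)$ gives
$$\p_{\ell_1}^{\|\ell_2}\circ \p_{\ell_5}^{\|\ell_3}\circ \p_{\ell_1}^{\|\ell_3}\circ \p_{\ell_5}^{\|\ell_4}\big|_{\ell_1},$$
and two elementary identities collapse this: first, $\p_{\ell_5}^{\|\ell_3}\circ \p_{\ell_1}^{\|\ell_3}=\p_{\ell_5}^{\|\ell_3}$, since $\p_{\ell_1}^{\|\ell_3}$ differs from the identity by a vector in $\ell_3$ which the next projection kills; second, $\p_{\ell_5}^{\|\ell_3}$ is the identity on $\ell_5$, hence drops out once one observes that $\p_{\ell_5}^{\|\ell_4}(v) \in \ell_5$. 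The composition then collapses to $\p_{\ell_1}^{\|\ell_2}\circ \p_{\ell_5}^{\|\ell_4}|_{\ell_1} = R(\ell_1,\ell_2,\ell_4,\ell_5)$.

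For (2) I would introduce the linear isomorphism $T := \p_{\ell_3}^{\|\ell_4}|_{\ell_1}\colon \ell_1\to\ell_3$, which is an isomorphism since $\ell_1\cap\ell_4=0$, and verify directly that $T\circ R(\ell_1,\ell_2,\ell_3,\ell_4) = R(\ell_3,\ell_4,\ell_1,\ell_2)\circ T$. Unwrapping both sides using the relation $\p_{\ell_3}^{\|\ell_4}(v) = v - \p_{\ell_4}^{\|\ell_3}(v)$ for $v\in\ell_1$ and the fact that $\p_{\ell_2}^{\|\ell_1}$ annihilates $\ell_1$, both sides reduce to $-\p_{\ell_3}^{\|\ell_4}\circ \p_{\ell_2}^{\|\ell_1}\circ \p_{\ell_4}^{\|\ell_3}|_{\ell_1}$. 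Part (3) is then immediate from (1) and (2) by taking determinants and using multiplicativity and conjugation-invariance of $\det$.

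The substantive part is (4), where I plan to normalize using the symplectic group action. Given the maximal quadruple $(\ell_1,\ell_2,\ell_3,\ell_4)$, I would pick an auxiliary Lagrangian $\ell_\infty$ transverse to all four and lying in the interval $\ioo{\ell_4}{\ell_1}$, so that $(\ell_1,\ell_2,\ell_3,\ell_4,\ell_\infty)$ is a cyclically ordered maximal quintuple, then send $\ell_\infty$ to the Lagrangian at infinity by a symplectic transformation. By Lemma \ref{lem:order} applied to the three consecutive triples $(\ell_i,\ell_{i+1},\ell_\infty)$, the affine coordinates satisfy $X_1\ll X_2\ll X_3\ll X_4$. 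A further symplectic transformation fixing $\ell_\infty$ (acting on $\Sym_n(\bF)$ by $X\mapsto TXT^t+Y$, using that positive definite symmetric matrices over the real closed field $\bF$ have square roots) then normalizes to $X_1=0$ and $X_2=I$, while preserving the conjugacy class of $R$. Setting $B := X_3-I\gg 0$ and $C := X_4-X_3\gg 0$, formula \eqref{eqn:Rinfty} simplifies to
$$R(\ell_1,\ell_2,\ell_3,\ell_4) = (I+BC^{-1})(I+B).$$

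The key algebraic observation is that conjugating by $B^{1/2}$ yields
$$B^{-1/2}R\, B^{1/2} = (I+B^{1/2}C^{-1}B^{1/2})(I+B) =: PQ,$$
where $P := I+B^{1/2}C^{-1}B^{1/2}$ and $Q := I+B$ are both symmetric positive definite with $P\gg I$ and $Q\gg I$. The product $PQ$ is similar, via $P^{1/2}$, to the symmetric matrix $P^{1/2}QP^{1/2}$, and the identity
$$P^{1/2}QP^{1/2} - I = P^{1/2}\bigl((Q-I)+(I-P^{-1})\bigr)P^{1/2}$$
displays the difference as positive definite, since $Q-I=B\gg 0$ and $P\gg I$ forces $I-P^{-1}\gg 0$. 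Hence $P^{1/2}QP^{1/2}\gg I$ is a symmetric positive definite matrix over $\bF$, and since $\bF$ is real closed it diagonalizes over $\bF$ with all eigenvalues strictly greater than $1$; these are also the eigenvalues of $R(\ell_1,\ell_2,\ell_3,\ell_4)$. The main obstacle is the normalization step: verifying that a suitable $\ell_\infty$ exists in the correct cyclic position so that Lemma \ref{lem:order} applies to each consecutive pair, and tracking that each reduction preserves the similarity class of $R$. Once the normalized expression $(I+BC^{-1})(I+B)$ is in hand, the spectral conclusion reduces cleanly to positive-definite manipulations valid over any real closed field.
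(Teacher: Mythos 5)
The paper does not prove this proposition: it is stated with a citation to \cite[Lemma 4.4]{BP}, so there is no internal proof to compare against. I will therefore evaluate your argument on its own.

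Your proofs of (1)--(3) are correct and essentially forced by the definition $R(\ell_a,\ell_b,\ell_c,\ell_d)=\p_{\ell_a}^{\|\ell_b}\circ\p_{\ell_d}^{\|\ell_c}|_{\ell_a}$: the two projection identities you invoke for (1) are exactly right, and for (2) the intertwiner $T=\p_{\ell_3}^{\|\ell_4}|_{\ell_1}$ does reduce both $T\circ R(\ell_1,\ell_2,\ell_3,\ell_4)$ and $R(\ell_3,\ell_4,\ell_1,\ell_2)\circ T$ to $-\p_{\ell_3}^{\|\ell_4}\circ\p_{\ell_2}^{\|\ell_1}\circ\p_{\ell_4}^{\|\ell_3}|_{\ell_1}$, using that $\p_{\ell_3}^{\|\ell_4}$ kills $\ell_4$ and $\p_{\ell_2}^{\|\ell_1}$ kills $\ell_1$. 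Part (4) is also correct: the algebra $B^{-1/2}R\,B^{1/2}=(I+B^{1/2}C^{-1}B^{1/2})(I+B)=PQ$, the further similarity to $P^{1/2}QP^{1/2}$, and the identity $P^{1/2}QP^{1/2}-I=P^{1/2}\big((Q-I)+(I-P^{-1})\big)P^{1/2}\gg 0$ all check out, and over a real closed field a symmetric positive definite matrix is orthogonally diagonalizable with eigenvalues in $\bF$, so $R$ has eigenvalues in $\bF$ all exceeding $1$.

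Two remarks. First, the step you flag as the ``main obstacle''---choosing $\ell_\infty$ transverse to all four $\ell_i$ with $(\ell_4,\ell_\infty,\ell_1)$ maximal, and then deducing $(\ell_1,\ell_2,\ell_\infty),(\ell_2,\ell_3,\ell_\infty),(\ell_3,\ell_4,\ell_\infty)$ all maximal so that $X_1\ll X_2\ll X_3\ll X_4$---is not entirely free: it requires either the cocycle identity for the Maslov index (combined with $|\tau|\le n$) or a known cyclic-ordering lemma, together with a genericity argument for transversality (the interval $\ioo{\ell_4}{\ell_1}$ is semialgebraically open and nonempty, so it cannot be contained in the proper closed subvariety of Lagrangians non-transverse to $\ell_2$ or $\ell_3$). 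This needs to be spelled out. Second, there is a shortcut that bypasses both the fifth Lagrangian and the formula \eqref{eqn:Rinfty}: normalize the transverse pair $(\ell_1,\ell_3)$ to $(\ell_0,\ell_\infty)$. Then maximality of $(\ell_1,\ell_2,\ell_3)$ gives $X_2\gg 0$ and maximality of $(\ell_3,\ell_4,\ell_1)$ gives $X_4\ll 0$, and computing $\p_{\ell_0}^{\|\ell_2}\circ\p_{\ell_4}^{\|\ell_\infty}|_{\ell_0}$ directly from the definition yields $R=I-X_2^{-1}X_4$. Since $-X_2^{-1}X_4$ is similar to the symmetric positive definite matrix $X_2^{-1/2}(-X_4)X_2^{-1/2}$, the same spectral conclusion follows in one line, with no auxiliary $\ell_\infty$ and no square roots of $B$.
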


\subsection{$\bF$-tubes and orthogonal projections}\label{subsubsec:F-tubes}
If $\ell,\ell'\in\calL(\bF^{2n})$, the \emph{$\bF$-tube} determined by $\ell,\ell'$ is the 
semi-algebraic subset of $\snf$ given by the equation
\bqn
\Yy_{\ell,\ell'}:=
\left\{Z\in\snf:\,R\left(\ell,\left\langle\begin{pmatrix}Z\\\Id_n\end{pmatrix}\right\rangle,
								     \left\langle\begin{pmatrix}\overline{Z}\\\Id_n\end{pmatrix}\right\rangle,  \ell'\right)=-\Id_n\right\}\,,
\eqn
where $\overline Z$ is the complex conjugate of $Z$, \cite[\S~4.2]{BP}.

If $\bF=\bR$ and $n=1$, the $\bR$-tube $\Yy_{\ell,\ell'}$ is the geodesic between $\ell$ and $\ell'$
while, for $n\geq1$, $\Yy_{\ell,\ell'}$ is a symmetric subspace of $\snr$ that is a Lagrangian submanifold and is isometric
to the symmetric space associated to $\GL(n,\bR)$.  In general,  for all $g\in\PSp(2n,\bF)$,
\bq\label{eq:inv-tube}
\Yy_{g\ell,g\ell'}=g(\Yy_{\ell,\ell'})
\eq
and if we denote
\bqn
\ell_0:=\left\langle\begin{pmatrix}0\\\Id_n\end{pmatrix}\right\rangle\,,
\eqn
then
\bqn
\Yy_{\ell_0,\ell_\infty}:=\{\imath Y:\,Y\in\Sym_n(\bF),Y\gg0\}\,.
\eqn
We will often write $\Yy_{0,\infty}$ for $\Yy_{\ell_0,\ell_\infty}$.

If $(\ell_1,\ell_2,\ell_3,\ell_4)$ is a maximal quadruple, then $\Yy_{\ell_1,\ell_3}$ and $\Yy_{\ell_2,\ell_4}$
meet exactly in one point. Such $\bF$-tubes are called {\em orthogonal} if
\bqn
R(\ell_1,\ell_2,\ell_3,\ell_4)=2\,\Id_n
\eqn
(see \cite[Proposition~4.7 and Definition~4.14]{BP}).  
If $\bF=\bR$, the tubes are orthogonal if and only if they are orthogonal as submanifolds of the Riemannian manifold $\snr$.
Given any point $p\in \ioo{\ell_1}{\ell_3}\cup \ioo{\ell_3}{\ell_1}$, 
there exists a unique $\bF$-tube $\Yy_{\ell_2,\ell_4}$ orthogonal to $\Yy_{\ell_1,\ell_3}$
``with endpoint $p$'' in the following sense
\bqn
\begin{cases}
\ell_2:=p\quad&\text{ if } p\in \ioo{\ell_1}{\ell_3}\\
\ell_4:=p\quad&\text{ if } p\in  \ioo{\ell_3}{\ell_1}
\end{cases}
\eqn
In this way we obtain a map
\bqn
\pr_{\Yy_{\ell_1,\ell_3}}:\ioo{\ell_1}{\ell_3}\cup \ioo{\ell_3}{\ell_1}
\to\Yy_{\ell_1,\ell_3}
\eqn
called the {\em orthogonal projection}.

In the case of $\Yy_{0,\infty}$ the map $\pr_{\Yy_{0,\infty}}$ is given by
\bqn
\pr_{\Yy_{0,\infty}}(Y)
=\begin{cases}
\hphantom{-}\imath Y&\text{ if }Y\in \ioo{\ell_0}{\ell_\infty}\\
-\imath Y&\text{ if }Y\in \ioo{\ell_\infty}{\ell_0}\,.
  \end{cases}
\eqn
In view of \eqref{eq:inv-tube}, this implies in particular that the restrictions of $\pr_{\Yy_{\ell_1,\ell_3}}$ to $\ioo{\ell_1}{\ell_3}$ and $\ioo{\ell_3}{\ell_1}$
are both bijective.

\begin{lem}\label{lem:preconseq-of-causal} Assume $(\ell,\ell_1,\ell_2,\ell')$ is maximal.
Then for the pseudodistance $d^1$ on $\snf$ (see \eqref{e.pdist} in \S\ref{subsec:Siegel}) we have,
\bqn
d^1\left(\pr_{\Yy_{\ell,\ell'}}(\ell_1),\pr_{\Yy_{\ell,\ell'}}(\ell_2)\right)=-v(\det R(\ell,\ell_1,\ell_2,\ell'))\,.
\eqn
\end{lem}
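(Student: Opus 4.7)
The plan is to reduce to a normal form using the transitivity of $\PSp(2n,\bF)$ on pairs of transverse Lagrangians, and then compute both sides explicitly. Both sides of the claimed equality are $\PSp(2n,\bF)$-invariant: the left side because $d^1$ is $\PSp(2n,\bF)$-invariant and the orthogonal projection is equivariant by \eqref{eq:inv-tube}, and the right side by Proposition \ref{lem:crBP} (more precisely by the conjugation-invariance statement preceding it). Hence I may assume $\ell=\ell_0$ and $\ell'=\ell_\infty$. By Lemma \ref{lem:order} applied to $(\ell_0,\ell_1,\ell_2,\ell_\infty)$ maximal, there exist $X_1,X_2\in\Sym_n(\bF)$ with $0\ll X_1\ll X_2$ such that $\ell_i=\langle{}^t(X_i,\Id_n)\rangle$ for $i=1,2$. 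From the explicit formula for $\pr_{\Yy_{0,\infty}}$ recalled in \S\ref{subsubsec:F-tubes} we have $\pr_{\Yy_{0,\infty}}(\ell_i)=\imath X_i$.

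Next I compute the multiplicative distance $D(\imath X_1,\imath X_2)$. Since $\bF$ is real closed and $X_1\gg0$, there exists a unique positive definite symmetric square root $X_1^{1/2}$, and the element $g=\begin{pmatrix}X_1^{-1/2}&0\\0&X_1^{1/2}\end{pmatrix}$ lies in $\Sp(2n,\bF)$ and satisfies $g_\ast(\imath X_1)=\imath\Id_n$ and $g_\ast(\imath X_2)=\imath X_1^{-1/2}X_2X_1^{-1/2}$. The matrix $Y:=X_1^{-1/2}X_2X_1^{-1/2}$ is symmetric and, since $X_2\gg X_1$, satisfies $Y\gg\Id_n$. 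Diagonalizing $Y$ by an orthogonal matrix $k\in\operatorname{O}(n,\bF)$ (which embeds into $K\subset\Sp(2n,\bF)$ via $\mathrm{diag}(k,k)$ and fixes $\imath\Id_n$), I bring the pair $(\imath X_1,\imath X_2)$ into the normal form $(\imath\Id_n,\imath D)$ with $D=\mathrm{diag}(d_1,\dots,d_n)$, $d_i>1$. Hence
\begin{equation*}
D(\imath X_1,\imath X_2)=\prod_{i=1}^n d_i=\det Y=\frac{\det X_2}{\det X_1}.
\end{equation*}

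Finally I compute $\det R(\ell_0,\ell_1,\ell_2,\ell_\infty)$. Since formula \eqref{eqn:Rinfty} requires transversality to $\ell_\infty$, I compute $R$ directly from its definition as $\p_{\ell_0}^{\|\ell_1}\circ\p_{\ell_\infty}^{\|\ell_2}|_{\ell_0}$. A vector ${}^t(0,w)\in\ell_0$ decomposes along $\ell_\infty\oplus\ell_2$ as ${}^t(-X_2w,0)+{}^t(X_2w,w)$, and then ${}^t(-X_2w,0)\in\ell_\infty$ decomposes along $\ell_0\oplus\ell_1$ as ${}^t(0,X_1^{-1}X_2w)+{}^t(-X_2w,-X_1^{-1}X_2w)$. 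Thus in the frame \eqref{e.frame} of $\ell_0$, the endomorphism $R(\ell_0,\ell_1,\ell_2,\ell_\infty)$ is represented by $X_1^{-1}X_2$, so
\begin{equation*}
\det R(\ell_0,\ell_1,\ell_2,\ell_\infty)=\det(X_1^{-1}X_2)=\frac{\det X_2}{\det X_1}=D(\imath X_1,\imath X_2).
\end{equation*}
Applying $-v$ to both sides and using \eqref{e.pdist} yields the lemma. The main delicate point is the direct computation of $R$ when one of the four Lagrangians equals $\ell_\infty$ (so that \eqref{eqn:Rinfty} does not apply verbatim); once this is handled, the remaining steps are a straightforward normal-form calculation valid over any real closed field.
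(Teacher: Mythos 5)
Your proof is correct and follows essentially the same route as the paper's: reduce by equivariance to $\ell=\ell_0$, $\ell'=\ell_\infty$, compute $D(\imath X_1,\imath X_2)=\det(X_1^{-1}X_2)$ via the normal-form reduction through $g=\operatorname{diag}(X_1^{-1/2},X_1^{1/2})$, and match this with $\det R$. The one place where you are more careful than the paper is in the computation of $\det R(\ell_0,\ell_1,\ell_2,\ell_\infty)$: the paper simply cites \eqref{eqn:Rinfty}, but that formula as stated requires all four Lagrangians to be transverse to $\ell_\infty$, which fails here; your direct computation of $\p_{\ell_0}^{\|\ell_1}\circ\p_{\ell_\infty}^{\|\ell_2}|_{\ell_0}$ from the definition is the right way to close this small gap.
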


\begin{proof}  We may assume that $\ell=\ell_0$
and $\ell'=\ell_\infty$, and set $\ell_j:=\left\langle\begin{pmatrix}Y_j\\ \Id_n\end{pmatrix}\right\rangle$, for $j=1,2$.
Then we have $0\ll Y_1\ll Y_2$ and in particular the eigenvalues $r_1,\dots,r_n$ of the symmetric matrix
$Y_1^{-1/2}Y_2Y_1^{-1/2}$ are all greater than 1.  By invariance of the distance $d^1$ we have
\bqn
\ba
  d^1(\imath Y_1,\imath Y_2)
=&d^1\left(\imath\Id_n,\imath Y_1^{-1/2}Y_2Y_1^{-1/2}\right)
=\sum_{j=1}^n-v(r_j)\\
=&-v\left(\prod_{j=1}^nr_j\right)
=-v\left(\det(Y_1^{-1}Y_2)\right)\\
=&-v(\det R(\ell,\ell_1,\ell_2,\ell'))\,,
\ea
\eqn
where the last equality uses \eqref{eqn:Rinfty}.
This proves the lemma.
\end{proof}

\subsection{Barycenters}\label{subsec:baryc}
Assume that $\bF$ is non-Archimedean.  If the triple of Lagrangians
$(\ell_1,\ell_2,\ell_3)$ is  either maximal or minimal,
then $\ell_2\in\ioo{\ell_1}{\ell_3}\cup\ioo{\ell_3}{\ell_1}$.
We define
\bqn
b(\ell_1,\ell_2,\ell_3):=\pr_{{\Yy}_{\ell_1,\ell_3}}(\ell_2)\in\snf.
\eqn
The \emph{barycenter} of the triple $(\ell_1,\ell_2,\ell_3)$ is the point
\bqn
B(\ell_1,\ell_2,\ell_3)=\pi(b(\ell_1,\ell_2,\ell_3))\,,
\eqn
where $\pi\colon\snf\to\calB_n^\bF$ is the metric quotient introduced in \S\ref{subsec:Siegel}.
Our goal is to show the following (cfr. \cite[Lemma 7.5]{BP}):

\begin{prop}\label{prop: barycenter is symmetric} $B(\ell_1,\ell_2,\ell_3)$ is invariant under permutation of the arguments.
\end{prop}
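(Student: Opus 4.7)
The plan is to exploit the transitivity of $\PSp(2n,\bF)$ on maximal triples to reduce to a standard configuration, then verify invariance by showing that all six points $b(\sigma(\ell_1,\ell_2,\ell_3))$, $\sigma\in S_3$, land in the $\Sp(2n,\bZ)$-orbit of $\imath\,\Id$, which projects to a single point in $\calB_n^\bF$. Since $S_3$ is generated by $\sigma_{13}=(1\,3)$ and $\sigma_{12}=(1\,2)$, it suffices to treat these two.

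Invariance under $\sigma_{13}$ is essentially tautological: the $\bF$-tube $\Yy_{\ell_1,\ell_3}$ depends only on the unordered pair $\{\ell_1,\ell_3\}$, so $b(\ell_3,\ell_2,\ell_1)=\pr_{\Yy_{\ell_1,\ell_3}}(\ell_2)=b(\ell_1,\ell_2,\ell_3)$ already in $\snf$, before passing to the quotient. For $\sigma_{12}$, I use the $\PSp(2n,\bF)$-transitivity on maximal triples together with the $\PSp$-equivariance of $b$ (which follows from the equivariance of the $\bF$-tubes and the orthogonal projection, see \S\ref{subsubsec:F-tubes}) to reduce to the standard triple $(\ell_\infty,\ell_{\Id},\ell_0)$, with $\ell_{\Id}=\langle(\Id,\Id)^T\rangle$. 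The explicit formula for $\pr_{\Yy_{\ell_0,\ell_\infty}}$ recalled in \S\ref{subsubsec:F-tubes} immediately yields $b(\ell_\infty,\ell_{\Id},\ell_0)=\imath\,\Id$.

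The computation of $b(\ell_{\Id},\ell_\infty,\ell_0)=\pr_{\Yy_{\ell_{\Id},\ell_0}}(\ell_\infty)$ is done by conjugation: the element $g=\bigl(\begin{smallmatrix}\Id & 0\\ -\Id & \Id\end{smallmatrix}\bigr)\in\Sp(2n,\bZ)$ fixes $\ell_0$, sends $\ell_{\Id}\mapsto\ell_\infty$ and $\ell_\infty\mapsto\ell_{-\Id}$, so by equivariance
\[
b(\ell_{\Id},\ell_\infty,\ell_0)=g^{-1}\bigl(\pr_{\Yy_{\ell_0,\ell_\infty}}(\ell_{-\Id})\bigr)=g^{-1}(\imath\,\Id)=\tfrac{1}{2}(\Id+\imath\,\Id),
\]
where $g^{-1}$ acts as the fractional linear transformation $Z\mapsto Z(Z+\Id)^{-1}$, and $\pr_{\Yy_{\ell_0,\ell_\infty}}(\ell_{-\Id})=-\imath(-\Id)=\imath\,\Id$ since $-\Id\ll 0$ places $\ell_{-\Id}$ in the interval $\ioo{\ell_\infty}{\ell_0}$.

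To finish, both points lie in the $\Sp(2n,\bZ)$-orbit of the base point $\imath\,\Id$. Order-compatibility of $v$ forces every nonzero rational integer to have valuation $0$, whence $\bZ\subset\bU$ and $\Sp(2n,\bZ)\subseteq\Sp(2n,\bU)$. Under the identification $\calB_n^\bF=\PSp(2n,\bF)/\PSp(2n,\bU)$ recalled in \S\ref{subsec:Siegel}, $\Sp(2n,\bU)$ is the stabilizer of the base point $\pi(\imath\,\Id)$, so $\pi(g^{-1}(\imath\,\Id))=\pi(\imath\,\Id)$, yielding $B(\ell_1,\ell_2,\ell_3)=B(\ell_2,\ell_1,\ell_3)$. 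The main obstacle is justifying this last step cleanly; an alternative bypassing the Bruhat--Tits-type identification is to compute $D(\imath\,\Id,\tfrac{1}{2}(\Id+\imath\,\Id))$ directly: its coordinates lie in $\bQ$, so $D$ is an algebraic real number, which has valuation $0$ by order-compatibility, giving $d^1=0$ and the same conclusion.
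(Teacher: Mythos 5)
Your proof is correct and rests on the same core strategy as the paper's: normalize to a standard triple using transitivity of $\PSp(2n,\bF)$ on maximal (and minimal) triples, compute barycenters explicitly, and conclude by showing the relevant distance has valuation zero. The differences are tactical but worth noting. The paper computes the barycenters for all three unordered pairs of the standard triple via the explicit projection formula for $\Yy_{0,\infty}$ (conjugated for the other two tubes), then verifies the pairwise distances vanish using the rank-$1$ distance formula from the preceding lemma. You instead (a) observe that the $(13)$-swap is automatic at the level of $\snf$ since $\Yy_{\ell_1,\ell_3}=\Yy_{\ell_3,\ell_1}$ and $S_3=\langle(12),(13)\rangle$, reducing the whole problem to a single transposition; (b) produce the $(12)$-swapped barycenter via an elegant conjugation by $g=\bigl(\begin{smallmatrix}\Id & 0\\ -\Id & \Id\end{smallmatrix}\bigr)\in\Sp(2n,\bZ)$, giving $\tfrac{1+\imath}{2}\Id$, which matches the paper's $\pr_{\Yy_{\ell_2,\ell_0}}(\infty)$; and (c) close by noting $g^{-1}\in\Sp(2n,\bZ)\subseteq\Sp(2n,\bU)$ and that $\Sp(2n,\bU)$ stabilizes $\pi(\imath\Id)$. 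That last step invokes the identification $\calB_n^\bF=\PSp(2n,\bF)/\PSp(2n,\bU)$, which the paper mentions only in passing and expressly declines to rely on; you are right to flag this, and your fallback — $D(\imath\Id,\tfrac12(\Id+\imath\Id))$ is a real algebraic number in $\overline{\bQ}^r$ bounded between $1$ and an integer, hence of valuation $0$ — is sound and is essentially the content of the paper's $\calS_1^\bF$ computation in a different guise. One small imprecision: you credit $v(\bZ\setminus\{0\})=0$ to order-compatibility alone, but order-compatibility only yields $v(n)\leq 0$ for positive integers $n$; the lower bound $v(n)\geq 0$ comes from the non-Archimedean ultrametric inequality (which is indeed in force throughout \S\ref{subsec:baryc}). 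Both arguments are valid; yours is slightly more group-theoretic and avoids stating the rank-$1$ distance formula, at the cost of either invoking the Bruhat–Tits quotient description or the algebraicity-of-$D$ observation.
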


First we need to establish a formula for the (pseudo-)distance in $\calS_1^\bF$:
\begin{lem}
If $z_1=x_1+\imath y_1,\;z_2=x_2+\imath y_2\in\calS_1^\bF$, then 
$$d(z_1,z_2)=\max\left\{-v\left(\frac{(x_1-x_2)^2}{y_1y_2}\right), -v\left(\frac{y_1}{y_2}\right), -v\left(\frac{y_2}{y_1}\right)\right\}\,,$$
\end{lem}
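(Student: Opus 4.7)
The starting point is the classical identity
\bqn
D(z_1,z_2)+D(z_1,z_2)^{-1}=2+\frac{(x_1-x_2)^2+(y_1-y_2)^2}{y_1y_2},
\eqn
which I would first establish over $\bF$. Over $\bR$ this is the familiar $D+D^{-1}=2\cosh d=2+|z_1-z_2|^2/(y_1y_2)$, a polynomial identity in the coordinates of $z_1,z_2$ and the entries of the $\SL(2,\bF)$ element bringing the pair to canonical form $(\imath,\imath D)$; as such it is a first order statement and transfers to the real closed field $\bF$ by Tarski's principle, though one may equally well verify it by direct computation in $\SL(2,\bF)$. Since $d^1(z_1,z_2)=-v(D(z_1,z_2))$, it remains to evaluate $v(D)$.

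Set $M:=((x_1-x_2)^2+(y_1-y_2)^2)/(y_1y_2)\geq 0$. The second step would show $\|D\|_v=\max(1,\|M\|_v)$. As $D\geq 1$ we have $\|D\|_v\geq 1\geq \|D^{-1}\|_v$, so the ultrametric inequality forces $\|D+D^{-1}\|_v=\|D\|_v$ whenever $\|D\|_v>1$, and then $\|M\|_v=\|2+M-2\|_v=\|D+D^{-1}-2\|_v=\|D\|_v$ since $\|2\|_v=1$; conversely, if $\|D\|_v=1$ then $M=D+D^{-1}-2$ has norm at most $1$, matching $\max(1,\|M\|_v)=1$. Next, the two terms whose sum is $M$ are both non-negative, each bounded above in the order of $\bF$ by $M$ itself; order compatibility of $v$ and the reverse ultrametric inequality together yield
\bqn
\|M\|_v=\max\bigl(\|(x_1-x_2)^2/(y_1y_2)\|_v,\;\|(y_1-y_2)^2/(y_1y_2)\|_v\bigr).
\eqn

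The final step is to recognize the second entry using the algebraic identity $(y_1-y_2)^2/(y_1y_2)=r+r^{-1}-2$ with $r:=y_1/y_2>0$. The elementary order inequalities $r+r^{-1}\geq r$, $r+r^{-1}\geq r^{-1}$ and $r+r^{-1}\geq 2$, all valid in any ordered field, combined with order compatibility and the ultrametric inequality, yield $\|r+r^{-1}\|_v=\max(\|r\|_v,\|r^{-1}\|_v)$. A short case split on whether this maximum is strictly greater than $1$ (where $\|r+r^{-1}-2\|_v=\|r+r^{-1}\|_v$ by ultrametric dominance) or equal to $1$ (where both sides of the equality below are $1$) then gives
\bqn
\max\bigl(1,\|(y_1-y_2)^2/(y_1y_2)\|_v\bigr)=\max(\|y_1/y_2\|_v,\|y_2/y_1\|_v),
\eqn
and assembling the three steps produces the claimed formula after taking logarithms.

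I do not expect a serious conceptual obstacle; rather, the care required is bookkeeping. One must repeatedly alternate between two features of the valuation -- order compatibility (which rules out any cancellation between the non-negative summands $(x_1-x_2)^2/(y_1y_2)$ and $(y_1-y_2)^2/(y_1y_2)$) and the non-Archimedean property (which turns sums into maxima, with equality only under strict dominance) -- and track the cases where $\|D\|_v=1$, $\|r\|_v=1$ or the two ratios have equal norms. The harmless but essential ingredient $\|2\|_v=1$ underlies each invocation of the ultrametric equality.
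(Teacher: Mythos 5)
Your proof is correct and follows essentially the same strategy as the paper: start from the classical identity $D+D^{-1}=2+|z_1-z_2|^2/(y_1y_2)$ (the paper writes this as $d(z_1,z_2)=\ln\|T+\sqrt{T^2-1}\|_v$ with $T=1+|z_1-z_2|^2/(2y_1y_2)$, which is equivalent since $D=T+\sqrt{T^2-1}$), then exploit that for non-negative $a,b\in\bF$, order compatibility plus the ultrametric inequality give $\|a+b\|_v=\max(\|a\|_v,\|b\|_v)$. The one place where the paper is cleaner is that it avoids your repeated case splits on whether a norm exceeds $1$: expanding $(y_1-y_2)^2/(2y_1y_2)=\frac{y_1}{2y_2}-1+\frac{y_2}{2y_1}$ shows that the constants cancel and $T=\frac{(x_1-x_2)^2}{2y_1y_2}+\frac{y_1}{2y_2}+\frac{y_2}{2y_1}$ is already a sum of three manifestly non-negative terms, so a single application of the ultrametric equality for non-negative summands, together with $\|1/2\|_v=1$, gives the formula directly — there is no need to peel off the $2$, nor to treat $(y_1-y_2)^2/(y_1y_2)=r+r^{-1}-2$ as a separate subproblem.
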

\begin{proof}
Recall that for a general real closed field $\bF$, if $z_1,z_2\in\calS_1^\bF$,
\bqn
d(z_1,z_2)=\ln\|T+\sqrt{T^2-1}\|_v\,,
\eqn
where $T=1+\frac{|z_1-z_2|^2}{2y_1y_2}$.
Since $\bF$ is non-Archimedean, then for $a,b\in\bF$, $a\geq0$, $b\geq0$, we have
\bqn
\|a+b\|_v=\|\max\{a,b\}\|_v\,.
\eqn
Hence
\bqn
\|T+\sqrt{T^2-1}\|_v=\|T\|_v
\eqn
and
\bqn
d(z_1,z_2)=\ln\|T\|_v\,.
\eqn
Since
\bqn
T=1+\frac{|z_1-z_2|^2}{2y_1y_2}=\frac{(x_1-x_2)^2}{2y_1y_2}+\frac{y_1}{2y_2}+\frac{y_2}{2y_1}\,,
\eqn
then
\bqn
\|T\|_v=\max\left\{\left\|\frac{(x_1-x_2)^2}{y_1y_2}\right\|_v, \left\|\frac{y_1}{y_2}\right\|_v,\left\|\frac{y_2}{y_1}\right\|_v\right\}\,,
\eqn
where we took into account that $\|n\|_v=1$ for any $n\in\bZ\setminus\{0\}$.
\end{proof}

\begin{proof}[Proof of Proposition~\ref{prop: barycenter is symmetric}]
Since $\PSp(2n,\bF)$ acts transitively on maximal and minimal triples,
we may assume that $(\ell_1,\ell_2,\ell_3):=\left(\ell_0,\left\langle\begin{pmatrix}Y\\\Id_n\end{pmatrix}\right\rangle,\ell_\infty\right)$,
where $Y=\pm\Id_n$, depending on whether $(\ell_1,\ell_2,\ell_3)$ is maximal or minimal.
A computation then gives:
\bqn
\ba
\pr_{\Yy_{0,\infty}}(\pm\Id_n)&= \imath\Id_n\\
\pr_{\Yy_{\ell_\infty,\ell_2}}(0)\,\,\,&=(\pm1+ \imath)\Id_n\\
\pr_{\Yy_{\ell_2,\ell_0}}(\infty)\,\,&=\frac{\pm1+ \imath}{2}\Id_n\,.
\ea
\eqn
Observe that if $Z=\diag(z_1,\dots,z_n),W=\diag(w_1,\dots,w_n)\in\snf$, then 
\bqn
d^1(Z,W)=\sum_{i=1}^nd(z_i,w_i)\,.
\eqn
Thus in order to compute the distances between various projections we just have to compute
the following in $\calS_1^\bF$:
\bqn
\ba
d(\imath,\imath\pm 1)&=\ln\max\{\|1\|_v,\|1\|_v,\|1\|_v\}=0\\
d\left(\imath,\left(\frac{1+\imath}{2}\right)\right)&=\ln\max\left\{\left\|\frac12\right\|_v,\left\|\frac12\right\|_v,\|2\|_v\right\}=0\,.
\ea
\eqn
This concludes the proof.
\end{proof}

\section{Applications to maximal framed representations}\label{sec:appl}
In this section we prove Theorem~\ref{thm_intro:thm1}, Corollary~\ref{cor:cor4}, Corollary~\ref{cor:cor2}, 
and Theorem~\ref{thm_intro:thm5}
in the introduction.

\subsection{The  geodesic current $\mu_\rho$ and Theorem~\ref{thm_intro:thm1}}\label{subsec:6.1}
Let $\rho\colon\Gamma\to\PSp(2n,\bF)$ be a maximal framed representation
with maximal framing $\varphi\colon\hg\to\calL(\bF^{2n})$.
Define then for every $(x_1,x_2,x_3,x_4)\in\hg^{[4]}$
\bqn
[x_1,x_2,x_3,x_4]_\rho:=-v(\det R(\varphi(x_1),\varphi(x_2),\varphi(x_3),\varphi(x_4)))\,.
\eqn
It follows from Proposition~\ref{lem:crBP} that $[\,\cdot\,,\,\cdot\,,\,\cdot\,,\,\cdot\,]_\rho$
is a positive crossratio on $\hg$, 
and hence (Proposition~\ref{prop:Rm} and \ref{prop:new4.10})
there is a geodesic current $\mu_\rho$ such that
\bqn
\per(\gamma)=i(\mu_\rho,\delta_c)
\eqn
for every closed geodesic $c$ represented by a hyperbolic element $\gamma\in\Gamma$.

Next, for the computation of $\per(\gamma)$,
recall that $g\in\Sp(2n,\bF)$ is called {\em Shilov hyperbolic} if 
there is a $g$-invariant decomposition $\bF^{2n}=\ell_+\oplus\ell_-$ into Lagrangians
such that all eigenvalues of $g|_{\ell_-}$ have absolute value strictly smaller than one
(and thus all eigenvalues of $g|_{\ell_+}$ have absolute value strictly larger than one),
\cite[Definition~2.12]{BP}. An element $g\in\PSp(2n,\bF)$ is {\em Shilov hyperbolic} if any of its lifts to $\Sp(2n,\bF)$ is. Of course the decomposition is uniquely determined by $g$ and doesn't depend on the lift.
We will need the following:

\begin{lem}[{\cite[Lemma 7.9]{BP}}]\label{lem:6.1}  Let $g\in\Sp(2n,\bF)$ be Shilov hyperbolic and 
let $\lambda_1,\dots,\lambda_n$ be the eigenvalues of $g|_{\ell_+}$.
Then for any $\ell\in\ioo{\ell_-}{\ell_+}$ we have
\bqn
\det R(\ell_-,\ell,g\ell,\ell_+)=\left(\prod_{i=1}^n\lambda_i\right)^2\,.
\eqn
\end{lem}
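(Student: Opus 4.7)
The plan is to compute $\det R(\ell_-, \ell, g\ell, \ell_+)$ by a direct calculation in suitably chosen coordinates. Since $\det R$ is $\PSp(2n,\bF)$-invariant by Proposition~\ref{lem:crBP}(3), and $\PSp(2n,\bF)$ acts transitively on pairs of transverse Lagrangians (cf.~\S\ref{subsubsec:max-tr-quadr}), I may assume $\ell_- = \ell_0$ and $\ell_+ = \ell_\infty$. In this normalization, every $g \in \Sp(2n,\bF)$ preserving both $\ell_0$ and $\ell_\infty$ is forced by the symplectic condition to take the block-diagonal form
\bqn
g = \begin{pmatrix} A & 0 \\ 0 & (A^T)^{-1}\end{pmatrix}
\eqn
for some $A \in \GL(n,\bF)$; moreover $A$ represents $g|_{\ell_+}$ in the canonical basis of $\ell_+$, so its eigenvalues are exactly $\lambda_1, \ldots, \lambda_n$.

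Reasoning with the Maslov cocycle as in Lemma~\ref{lem:order}, any $\ell \in \ioo{\ell_-}{\ell_+}$ is of the form $\ell = \left\langle \begin{pmatrix} Y \\ \Id_n \end{pmatrix}\right\rangle$ for some definite (hence invertible) $Y \in \Sym_n(\bF)$, and a direct matrix multiplication using the explicit form of $g$ above yields $g\ell = \left\langle \begin{pmatrix} AYA^T \\ \Id_n\end{pmatrix}\right\rangle$.

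Formula~\eqref{eqn:Rinfty} is not directly available here since $\ell_+ = \ell_\infty$ does not lie in the affine chart of Lagrangians transverse to $\ell_\infty$. Instead I compute $R$ from its intrinsic definition
\bqn
R(\ell_-, \ell, g\ell, \ell_+) = \p_{\ell_-}^{\|\ell} \circ \p_{\ell_+}^{\|g\ell}\big|_{\ell_-}.
\eqn
For a vector $v = \begin{pmatrix} 0 \\ v' \end{pmatrix} \in \ell_-$, decomposing $v$ in $\ell_+ \oplus g\ell$ yields the $\ell_+$-component $\begin{pmatrix} -AYA^T v' \\ 0 \end{pmatrix}$, and decomposing this vector in turn in $\ell_- \oplus \ell$ yields the $\ell_-$-component $\begin{pmatrix} 0 \\ Y^{-1}AYA^T v' \end{pmatrix}$. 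Thus in the canonical basis of $\ell_-$, the operator $R(\ell_-, \ell, g\ell, \ell_+)$ is represented by the matrix $Y^{-1}AYA^T$, whose determinant equals $\det(A)^2 = \big(\prod_{i=1}^n \lambda_i\big)^2$, as desired.

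The main obstacle is cosmetic rather than substantial: no single affine chart of Lagrangians contains all four of $\ell_-, \ell, g\ell, \ell_+$ in the convenient ``graph of a symmetric form'' presentation, so formula~\eqref{eqn:Rinfty} is unavailable. Working directly from the intrinsic projection definition of $R$ sidesteps this, and as a pleasant byproduct makes the final answer manifestly independent of the choice of $\ell \in \ioo{\ell_-}{\ell_+}$, as it must be.
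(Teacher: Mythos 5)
Your proof is correct and follows the same underlying route as the paper: normalize $(\ell_-,\ell_+)=(\ell_0,\ell_\infty)$, observe that $g$ must then be block diagonal $\bigl(\begin{smallmatrix}A&0\\0&{}^tA^{-1}\end{smallmatrix}\bigr)$, identify $\ell$ and $g\ell$ with symmetric matrices $Y$ and $AYA^T$, and arrive at the matrix $Y^{-1}AYA^T$ whose determinant is $(\det A)^2=(\prod\lambda_i)^2$. The one place where you diverge is in how that matrix is produced: the paper simply cites formula~\eqref{eqn:Rinfty} to get $X^{-1}AXA^T$, but as you correctly point out, \eqref{eqn:Rinfty} is stated for quadruples of Lagrangians all transverse to $\ell_\infty$, and $\ell_+=\ell_\infty$ itself violates that hypothesis. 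So the paper's application of \eqref{eqn:Rinfty} is, strictly speaking, an abuse — it should be read as a degenerate limit $X_4\to\infty$, or as invoking a companion formula for the chart where $\ell_4=\ell_\infty$. Your direct computation from the intrinsic definition $R=\p_{\ell_-}^{\|\ell}\circ\p_{\ell_+}^{\|g\ell}\big|_{\ell_-}$ sidesteps this entirely, and the bookkeeping in the two successive decompositions is carried out correctly. So the content is the same, but your version is formally cleaner on exactly the point you flagged. One trivial nit: ``definite'' in describing $Y$ should be ``positive definite'' (which is what $\ell\in\ioo{\ell_0}{\ell_\infty}$ means), though invertibility is all you use.
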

\begin{proof} We may assume by transitivity of the action of $\Sp(2n,\bF)$ on pairs of transverse Lagrangians, that $(\ell_-,\ell_+)=(\ell_0,\ell_\infty)$.
Furthermore $g=\begin{pmatrix}A&0\\0&{}^tA^{-1}\end{pmatrix}$, 
where $A$ is the matrix of $g|_{\ell_+}$.
Then $\ell$ corresponds to a matrix $X\in\Sym_n(\bF)$ with $X\gg0$
and \eqref{eqn:Rinfty} gives
\bqn
R(\ell_-,\ell,g\ell,\ell_+)=X^{-1}AXA^t\,.
\eqn 
This implies that
\bqn
\det R(\ell_-,\ell,g\ell,\ell_+)=(\det A)^2,
\eqn
 hence the lemma.
\end{proof}

Let $\gamma\in\Gamma$ be hyperbolic with attractive and repulsive fixed points $\gamma_+$ and $\gamma_-$.
It follows from \cite[Theorem~1.9]{BP} that $\rho(\gamma)$ is Shilov hyperbolic with corresponding decomposition
$\bF^{2n}=\varphi(\gamma_-)\oplus\varphi(\gamma_+)$. In particular the framing is uniquely determined by the representation $\rho$.
Furthermore, if $\lambda_1(\gamma),\dots,\lambda_n(\gamma)$ are the eigenvalues of $\rho(\gamma)$
with $|\lambda_1(\gamma)|\geq\dots|\lambda_n(\gamma)|\geq1$,
then Lemma~\ref{lem:6.1} implies that if $x\in\ioo{\gamma_-}{\gamma_+}$,
\bqn
\ba
 \per(\gamma)
&=-v(\det R(\varphi(\gamma_-),\varphi(x),\rho(\gamma)\varphi(x),\varphi(\gamma_+)))\\
&=2\sum_{i=1}^n-v(\lambda_i(\gamma))
=L(\rho(\gamma))\,.
\ea
\eqn
If there is an element $\g\in\Gamma$ with $-v(\tr(\rho(\g)))>0$, then necessarily $\rho(\g)$ has an eigenvalue with the same property and thus $\per(\g)>0$.  Viceversa if $\mu_\rho$ is non-zero, there exists a proper closed rectangle $R=\icc ab\times \icc cd$ with $\mu_\rho(R)>0$. Since $\Gamma$ is a lattice, we find $\g\in\Gamma$ with $\gamma_+\in\ioo bc, \gamma_-\in\ioo da$. For such $\gamma$, the intersection
$i(\mu_\rho,\delta_\gamma)>0$, and thus $\per(\gamma)>0$, which implies that $\|\lambda_1(\gamma)\ldots\lambda_n(\g)\|_v> 1$. Now assume by contradiction that   $-v(\tr(\rho(\g^s)))\leq 0$ for all $s\in\bN$. Then the coefficients of the characteristic polynomial of $\wedge^n\rho(\gamma)$ belong to the ring $\calO:=\{x\in\bF|\, \|x\|_v\leq 1\}$ as well. But $\lambda_1(\gamma)\ldots\lambda_n(\g)\in\bF$  is a root of this monic polynomial, and since $\calO\subset\bF$ is a valuation ring, it is integrally closed in $\bF$ (see \cite[Theorem 3.1.3.(1)]{EP}). This implies $\|\lambda_1(\gamma)\ldots\lambda_n(\g)\|_v\leq 1$, a contradiction, that concludes the proof of Theorem~\ref{thm_intro:thm1}.

\subsection{Displacing representations and Corollary~\ref{cor:cor4}}
Assume that $\Sigma$ is  compact and let $\rho\colon\Gamma\to\PSp(2n,\bF)$
be a maximal framed representation.  If 
\bqn
\Syst(\rho):=\inf_{\gamma\neq e}L(\rho(\gamma))>0\,,
\eqn
it follows from Theorem~\ref{thm_intro:thm1} that  the associated geodesic current $\mu_\rho$ 
has $\Syst(\mu_\rho)>0$; hence there exist $c_1,c_2$ such that, for every $\gamma\in\Gamma$,
\begin{equation}\label{e.6.2}
c_1\ell(\gamma)\leq L(\rho(\gamma))\leq c_2\ell(\gamma)\,,
\end{equation}
where $\ell(\gamma)$ is the hyperbolic length of $\gamma$ \cite[Theorem~1.3]{BIPP1}.
It follows then from \cite{DGLM} that the $\Gamma$-action on $\calB_n^\bF$
induced by $\rho$ is displacing and hence, for every $x\in\calB_n^\bF$,
the map $\gamma\mapsto\rho(\gamma)x$ is a quasi-isometric embedding.
This proves  Corollary~\ref{cor:cor4}.

\subsection{Maximal representations in $\PSp(2n,\bR)$ and Corollary~\ref{cor:cor2}}\label{s.6.2}
If $\Sigma$ is not necessarily compact, the inequalities in  \eqref{e.6.2} do not necessarily hold. However, 
applying \cite[Corollary~1.5 (2)]{BIPP1} to the current $\mu_\rho$, we deduce  following
\begin{cor}\label{cor:PositiveSystole}  
Let $\rho\colon\Gamma\to\PSp(2n,\bF)$ be a representation
admitting a maximal framing defined on $\hg$.
Assume that $\Syst_\Sigma(\rho)>0$.
Then for every 
compact subset $K\subset\Sigma$, there are constants $0<c_1\leq c_2$
such that 
\bqn
c_1\ell(c)\leq L(\rho(\gamma))\leq c_2\ell(c)
\eqn
for every $\gamma\in\Gamma$ representing a closed geodesic $c$ 
contained in $K$.
In particular there exist constants $c_1,c_2$ such that this holds for all $\gamma$ representing simple closed geodesics.
\end{cor}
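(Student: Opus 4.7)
The plan is to transfer the problem from the representation $\rho$ to the geodesic current $\mu_\rho$ supplied by Theorem \ref{thm_intro:thm1}, where both length and systole can be studied intrinsically on $\Sigma$, and then to invoke the bi-Lipschitz comparison result for currents of positive systole established in \cite{BIPP1}.

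First, I would apply Theorem \ref{thm_intro:thm1} to obtain a geodesic current $\mu_\rho$ on $\Sigma$ satisfying
\bqn
L(\rho(\gamma))=i(\mu_\rho,\delta_c)
\eqn
for every hyperbolic $\gamma\in\Gamma$ representing a closed geodesic $c\subset\Sigma$. Under this identification, the hypothesis $\Syst_\Sigma(\rho)>0$ translates directly into the statement that the systole of $\mu_\rho$, namely the infimum of $i(\mu_\rho,\delta_c)$ over closed geodesics $c\subset\Sigma$, is strictly positive.

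Second, I would invoke \cite[Corollary~1.5 (2)]{BIPP1}, which asserts that, for any geodesic current $\mu$ on $\Sigma$ of positive systole and any compact subset $K\subset\Sigma$, there exist constants $0<c_1\leq c_2$ such that $c_1\ell(c)\leq i(\mu,\delta_c)\leq c_2\ell(c)$ for every closed geodesic $c\subset K$. Applied to $\mu=\mu_\rho$ and the given $K$, and combined with the length identity from the previous step, this immediately yields the desired inequality $c_1\ell(c)\leq L(\rho(\gamma))\leq c_2\ell(c)$.

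Finally, to obtain the uniform statement for simple closed geodesics, I would exhibit a single compact subset containing all of them: by the Margulis lemma applied to the finite area surface $\Sigma$, there exists $\varepsilon>0$ such that any closed geodesic entering the $\varepsilon$-thin part (a disjoint union of standard horocyclic cusp neighborhoods) must wrap around the puncture and hence self-intersect. Consequently every simple closed geodesic lies in the compact thick part $\Sigma_{\geq\varepsilon}$, and applying the preceding step to this set yields the uniform constants. The main subtlety of the proof is thus entirely concentrated in the correct application of \cite[Corollary~1.5 (2)]{BIPP1}; the rest is a direct translation through $\mu_\rho$ together with this standard geometric fact about cusps.
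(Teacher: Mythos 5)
Your proposal follows exactly the paper's route: apply Theorem~\ref{thm_intro:thm1} to get $\mu_\rho$ with $L(\rho(\gamma))=i(\mu_\rho,\delta_c)$, note the systole hypothesis transfers to $\mu_\rho$, and invoke \cite[Corollary~1.5~(2)]{BIPP1}. The paper leaves the final claim about simple closed geodesics implicit; your thick--thin argument supplying the compact set containing all simple closed geodesics is a correct justification of that standard step.
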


Assume now that $\bF=\bR$ 
and let $\rho\colon\Gamma\to\PSp(2n,\bR)$ be a maximal representation.
Then there is a maximal framing $\varphi$ defined on $\partial\H$ \cite[Theorem 8]{BIW}
and hence, by Theorem~\ref{thm_intro:thm1}, a geodesic current $\mu_\rho$ with 
\bqn
i(\mu_\rho,\delta_c)=L(\rho(\gamma))\,,
\eqn
for every closed geodesic $c$ represented by a hyperbolic element $\gamma\in\Gamma$.
The Collar Lemma, \cite[Theorem~1.9]{BP},  then implies that
if $\gamma,\eta$ are intersecting hyperbolic elements,
\bqn
\left(e^{\frac{L(\rho(\gamma))}{n}}-1\right)\left(e^{\frac{L(\rho(\eta))}{n}}-1\right)\geq1\,.
\eqn
This  implies that, if $\gamma$ is self-intersecting, 
\bqn
L(\rho(\gamma))\geq n(\ln 2)\,,
\eqn
and  that there are at most $3g-3+p$ conjugacy classes of hyperbolic elements
$\gamma$ with $L(\rho(\gamma))\leq n(\ln2)$.
In particular $\Syst(\mu_\rho)>0$.
Corollary~\ref{cor:cor2} then follows from Corollary~\ref{cor:PositiveSystole}.

\subsection{Lamination type currents and Theorem~\ref{thm_intro:thm5}}
Let $\bF$ be non-Archimedean.
Given a maximal framed representation, as above, and with the notation of \S~\ref{subsec:baryc}, 
define the \emph{barycenter} of $(x,y,z)\in\hg^{(3)}$
\bqn
\beta_\rho(x,y,z):=B(\varphi(x),\varphi(y),\varphi(z))\in\calB_n^\bF\,.
\eqn
It follows from Proposition~\ref{prop: barycenter is symmetric} that $\beta$ is indeed a barycenter according to Definition \ref{d.bary}
and from Lemma~\ref{lem:preconseq-of-causal}
that it is compatible with the crossratio $[\,\cdot\,,\,\cdot\,,\,\cdot\,,\,\cdot\,]_\rho$
defined in \S~\ref{subsec:6.1}.

If now $\mu_\rho$ is of lamination type, we deduce from Theorem~\ref{thm:5.2}
that there is a well defined equivariant isometric embedding
\bqn
\calV({\mu_\rho})\hookrightarrow\calB_n^\bF
\eqn
from the set of vertices $\calV({\mu_\rho})$ of the $\bR$-tree $\calT(\mu_\rho)$
associated to $\mu_\rho$ into the metric space $\calB_n^\bF$.

\subsection{The value group of $[\,\cdot\,,\,\cdot\,,\,\cdot\,,\,\cdot\,]_\rho$ and Theorem~\ref{thm_intro:multicurves} }\label{sec.7.3}
Let $\rho\colon\Gamma\to\Sp(2n,\bF)$ be maximal framed with framing $\varphi\colon\hg\to\calL(\bF^{2n})$,
and set as always
$$
[x_1,x_2,x_3,x_4]_\rho:=-v(\det R(\varphi(x_1),\varphi(x_2),\varphi(x_3),\varphi(x_4))).
$$

\begin{thm}\label{thm:7.3}
Let $\Lambda:=v(\bQ(\rho))$ be the value group of the field $\bQ(\rho)$ generated over $\bQ$ by the matrix coefficents of $\rho$. Then 
$$[x_1,x_2,x_3,x_4]_\rho\in \frac 1{(8n)! }\Lambda$$
for all $(x_1,x_2,x_3,x_4)\in H_\Gamma^{[4]}$.
\end{thm}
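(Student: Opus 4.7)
The strategy is to show that $\det R(\varphi(x_1),\ldots,\varphi(x_4))$ lies in a specific finite Galois extension $\bF_1$ of $\bF_0:=\bQ(\rho)$ whose degree divides $(8n)!$, and then to invoke the standard valuation-theoretic fact that for a finite Galois extension of a valued field the ramification index divides the degree, so that the value group of $\bF_1$ is contained in $\tfrac{1}{d}\Lambda$.

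First I would identify the Lagrangians: each $x_i\in\hg$ is of the form $(\gamma_i)_+$ or $(\gamma_i)_-$ for some hyperbolic $\gamma_i\in\Gamma$, and, as already used in \S\ref{subsec:6.1} via \cite[Theorem~1.9]{BP}, $\rho(\gamma_i)$ is Shilov hyperbolic with $\varphi(x_i)$ equal to the sum of generalized eigenspaces of $\rho(\gamma_i)$ whose eigenvalues in $\bK=\bF(\sqrt{-1})$ have absolute value $>1$ (respectively $<1$). The characteristic polynomial $P_i\in\bF_0[T]$ of $\rho(\gamma_i)$ has degree $2n$, so the product $P_1P_2P_3P_4$ has degree $8n$; I would then let $\bF_1\subset\bK$ be its splitting field over $\bF_0$, which is a finite Galois extension whose degree $d$ divides $(8n)!$.

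Next I would show that $\det R\in\bF_1$. Over $\bF_1$ all eigenvalues of each $\rho(\gamma_i)$ lie in $\bF_1$, so the relevant generalized eigenspaces are $\bF_1$-rational and the complexified Lagrangian $\varphi(x_i)\otimes_\bF\bK\subset\bK^{2n}$ admits a basis in $\bF_1^{2n}$. The transversality of the pairs $(\varphi(x_1),\varphi(x_2))$ and $(\varphi(x_3),\varphi(x_4))$, guaranteed by the maximality of the relevant triples of framings, lets one express the projections $p_{\varphi(x_1)}^{\|\varphi(x_2)}$ and $p_{\varphi(x_4)}^{\|\varphi(x_3)}$ in those bases via Cramer's rule with coefficients in $\bF_1$. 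Hence $R(\varphi(x_1),\ldots,\varphi(x_4))$ is represented by a matrix in $M_n(\bF_1)$ and its determinant lies in $\bF_1$.

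Finally, the valuation $v$ on $\bF$ extends canonically to $\bK$ via $\bar v(z)=v(|z|)$ and restricts to a valuation $\bar v$ on $\bF_1$ extending $v|_{\bF_0}$. Since $\bF_1/\bF_0$ is Galois of degree $d$, the Galois group acts transitively on the extensions of $v|_{\bF_0}$, so the common ramification index $e$, the common residue degree $f$, and the number $g$ of extensions satisfy $efg=d$; in particular $e\mid d\mid(8n)!$, whence $\bar v(\bF_1^\times)\subset\tfrac{1}{e}\Lambda\subset\tfrac{1}{(8n)!}\Lambda$. Applied to $\det R$ this yields $[x_1,x_2,x_3,x_4]_\rho=-v(\det R)\in\tfrac{1}{(8n)!}\Lambda$. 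I expect the main subtle point to be the passage from the $\bF$-rational Lagrangian $\varphi(x_i)$ to its $\bF_1$-rational complexification: even when the eigenvalues of $\rho(\gamma_i)$ are genuinely complex and $\varphi(x_i)$ itself is not $\bF_1$-rational, its scalar extension to $\bK$ is, and this is exactly what is needed to force the cross-ratio determinant into $\bF_1$.
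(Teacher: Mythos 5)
Your proof follows the same strategy as the paper's: pick hyperbolic $\gamma_i$ with fixed points $x_i$, pass to the splitting field of the product of the four characteristic polynomials $c_{\gamma_1}\cdots c_{\gamma_4}\in\bQ(\rho)[X]$ (an extension of degree dividing $(8n)!$), observe that the Lagrangians $\varphi(x_i)$ are rational over the resulting field so that $\det R$ lies in it, and then control the ramification of the value group. The differences are cosmetic but worth noting. The paper descends to the \emph{real} subfield $\bL\cap\bF$ of the splitting field $\bL\subset\bK$ and then cites \cite[XII \S 4 Proposition 12]{Lang}, while you stay in $\bF_1=\bL\subset\bK$ and use the Galois structure of the splitting field directly; your route is actually slightly cleaner, because $\bL/\bQ(\rho)$ is Galois (so its degree and the ramification index both \emph{divide} $(8n)!$, not merely bound it), whereas $\bL\cap\bF/\bQ(\rho)$ need not be Galois. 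One small caveat: the fundamental identity $efg=d$ you invoke holds only in the absence of defect; but since the valuation here is non-Archimedean and order-compatible, it is trivial on $\bQ$, so the residue characteristic is $0$ and the defect vanishes. In any case, all your argument needs is the divisibility $e\mid d$, which is automatic for a Galois extension since $e$ is the order of a subquotient of the decomposition group. Your closing remark about passing from the $\bF$-rational Lagrangian $\varphi(x_i)$ to its $\bF_1$-rational complexification is exactly the point the paper glosses over with ``It is now easy to see,'' so you identified the right place to be careful.
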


\begin{proof}
We might assume that $\bF$ is the real closure of $\bQ(\rho)$. 
Let $(x_1,x_2,x_3,x_4)\in H_\Gamma^{[4]}$ and $\g_i$ hyperbolic with
$(\g_i)_-=x_i$.  If $c_{\g_1}$, $c_{\g_2}$, $c_{\g_3}$, $c_{\g_4}$ are
the characteristic polynomials of $\rho(\gamma_1)$, $\rho(\gamma_2)$,
$\rho(\gamma_3)$, $\rho(\gamma_4)$ then
$c_{\g_i}\in\bQ(\rho)[X]$. 
Since $\bF[i]$ is algebraically closed, $\rho(\g_i)$ splits in  $\bF[i]$. If $\bL$ is the
splitting field in $\bF[i]$
of  $c_{\g_1}c_{\g_2}c_{\g_3}c_{\g_4}\in\bQ(\rho)[X]$ 
then 
$$[\bL:\bQ(\rho)]\leq (8n)! \;.$$
Observe that the field $\bL$ depends on $\rho(\gamma_1)$, $\rho(\gamma_2)$, $\rho(\gamma_3)$, $\rho(\gamma_4)$. 

It is now easy to see that the Lagrangians $\varphi(x_i)\subset \bF^{2n}$ are defined over $\bL\cap\bF$, as a result we can represent them by $\left\langle\begin{pmatrix}X_i\\\Id_n\end{pmatrix}\right\rangle$ with $X_i\in\Sym_n(\bL\cap\bF)$, which implies that  $\det R(\varphi(x_1),\varphi(x_2),\varphi(x_3),\varphi(x_4))\in(\bL\cap\bF)^\times$. We conclude using \cite[XII \S 4 Proposition 12]{Lang} which says that the index of $\Lambda$ in $v((\bL\cap\bF)^\times)$ is at most $(8n)!$. This concludes the proof.
\end{proof}

In particular, if $\bQ(\rho)$ has discrete valuation, we can assume, up
to rescaling the valuation,
that the crossratio $[x_1,x_2,x_3,x_4]_\rho$ is integer valued. Theorem \ref{thm_intro:multicurves} is therefore a direct application of Proposition \ref{p.atomic}.
\section{Examples of maximal framed representations}\label{sec:example}

In this section we collect several interesting examples of maximal framed representations over non-Archimedean real closed fields.
\subsection{Ultralimits of representations and asymptotic cones}\label{ex:RobF} Let $(\rho_k)_{k\geq1}$ be a sequence
of maximal representations into $\Sp(2n,\bR)$ and $\omega$ a non-principal ultrafilter on $\bN$.
This gives rise to a representation $\rho_\omega\colon\Gamma\to\Sp(2n,\bR_\omega)$,
where $\bR_\omega$ is the field of hyperreals and $\rho_\omega(\Gamma)\subset\Sp(2n,\Oo_\sigma)$,
where the infinitesimal $\sigma$ is defined below.
Denoting by $\bR_{\omega,\sigma}$ is the Robinson field, the representation $\rho_{\omega,\sigma}$ obtained by composing $\rho_\omega$
with the projection 
\bqn
\Sp(2n,\bR_\omega)\to\Sp(2n,\bR_{\omega,\sigma})\,,
\eqn
  is a maximal framed representation of $\Gamma$ into
$\Sp(2n,\bR_{\omega,\sigma})$, \cite[Corollary~10.4]{BP}; 
its framing is defined on $\deH$, and Theorem~\ref{thm_intro:thm1} applies.

This construction is closely related to asymptotic cones, as we now recall.
Denoting by $d$ the $\Sp(2n,\bR)$-invariant Riemannian distance on the Siegel $n$-space,
we say that a sequence of scales $(\lambda_k)_{k\in\bN}\in(\bR_{>0})^\bN$ is {\em adapted} (to the sequence $(\rho_k)_{k\in\bN}$) if
for one, and hence every, finite generating set $S\subset\Gamma$
\bq\label{eq:adapted}
\lim_\omega\frac{\max_{\gamma\in S}d\big(\rho_k(\gamma)\imath\Id_n,\imath\Id_n\big)}{\lambda_k}<+\infty\,.
\eq
We obtain then an action
\bqn
^\omega\rho_\lambda\colon\Gamma\to\Isom({}^\omega\calX_\lambda)\,,
\eqn
on the asymptotic cone ${}^\omega\calS_\lambda$ of the sequence of pointed metric spaces given by $(\snr,\imath\Id_n,\frac{d}{\lambda_k})$.

If we set
$\sigma:=(e^{-\lambda_k})_{k\geq1}\in\bR_\omega$, 
then the asymptotic cone ${}^\omega\calX_\lambda$ can be identified
with the metric space $\calB_n^{\bR_{\omega,\sigma}}$ and,
under this identification, ${}^\omega\rho_\lambda$ corresponds to $\rho_{\omega,\sigma}$
(see for example \cite{APcomp}).

\subsection{Maximal representations in $\SL(2,\bF)$}\label{subsec:8.2}

Let $\bF$ be  a real closed field with an order compatible non-Archimedean
valuation, and let $\calT^\bF\supset\calB_1^\bF$ be the $\bR$-tree 
associated to $\SL_2(\bF)$. Then 
$\PP^1(\bF)$ identifies with a subset of $\partial_\infty\calT^\bF$ and
the restriction to $\PP^1(\bF)$ of the crossratio of
$\partial_\infty\calT^\bF$ is the standard crossratio
$[\,\cdot\,,\,\cdot\,,\cdot\,,\,\cdot\,]_\bF$
 in $\PP^1(\bF)$.

Hence any  representation $\rho\colon\Gamma\to\SL(2,\bF)$  
with framing $\varphi\colon\hg\to \PP^1(\bF)$,
gives a framed action on $\calT^\bF$.  
Note that the associated crossratio 
$[x_1,x_2,x_3,x_4]_\varphi
=[\varphi(x_1),\varphi(x_2),\varphi(x_3),\varphi(x_4)]_{\bF}$
is positive if the framing $\varphi$ is maximal.

 Proposition \ref{thm: framed trees} implies the following.

\begin{thm}
\label{thm:SL2 bis}  
Let $\bF$ be  a real closed field with an order
compatible non-Archimedean valuation.
Let $\rho\colon\Gamma\to\SL(2,\bF)$ be a representation 
with a maximal framing $\varphi\colon \hg\to \PP^1(\bF)$.
Denote by  $\mu_\rho$ the geodesic current
associated to the positive crossratio induced by $\varphi$ on 
$ X$.
Then $\mu_\rho$ corresponds to a measured lamination, 
and there is a $\Gamma$-equivariant isometric embedding
\bqn
\calV(\mu_\rho)\hookrightarrow \calT^\bF.
\eqn
In particular, for all hyperbolic $\gamma\in\Gamma$,
$\ell(\rho(\gamma))=i(\mu_\rho,\gamma).$
\end{thm}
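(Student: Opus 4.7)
The plan is to deduce the theorem as an immediate application of Proposition~\ref{thm: framed trees} to the framed action of $\Gamma$ on $\calT^\bF$. The setup has been arranged by the discussion preceding the statement: the inclusion $\PP^1(\bF)\hookrightarrow\partial_\infty\calT^\bF$ promotes the framing $\varphi\colon\hg\to\PP^1(\bF)$ to a framing of the isometric action of $\Gamma$ on $\calT^\bF$ in the sense of Example~\ref{ex:trees} (injectivity of $\varphi$ follows automatically from maximality, since the Lagrangians in a maximal triple are pairwise transverse). Since the tree crossratio on $\PP^1(\bF)$ coincides with the standard crossratio $[\,\cdot\,,\,\cdot\,,\,\cdot\,,\,\cdot\,]_\bF$, the induced crossratio on $\hg$ is
\[
[x_1,x_2,x_3,x_4]_\varphi=[\varphi(x_1),\varphi(x_2),\varphi(x_3),\varphi(x_4)]_{\bF},
\]
which is non-negative when $\varphi$ is maximal.

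Once these hypotheses are in place, Proposition~\ref{thm: framed trees} yields all three conclusions at once: that $\mu_\rho$ is of lamination type, the $\Gamma$-equivariant isometric embedding $\calT(\mu_\rho)\hookrightarrow\calT^\bF$ (which restricts to the desired embedding of the vertex set $\calV(\mu_\rho)$), and the length identity $\ell(\rho(\gamma))=i(\mu_\rho,\gamma)$ for every hyperbolic $\gamma\in\Gamma$.

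The main obstacle is the positivity assertion on $[\,\cdot\,,\,\cdot\,,\,\cdot\,,\,\cdot\,]_\bF$ for maximally ordered quadruples, which the paper states as a ``note'' rather than proves. To justify it I would unpack what maximality means in rank one: a triple of distinct lines in $\bF^2$ is maximal precisely when the associated Maslov quadratic form of \S\ref{subsubsec:max-tr-quadr} has signature $+1$, which is equivalent to the three corresponding points of $\PP^1(\bF)$ being ordered compatibly with the ordering of $\bF$ under any affine chart. A short matrix computation, parallel to Proposition~\ref{lem:crBP}(4), then shows that for four points of $\PP^1(\bF)$ in maximal cyclic order the classical projective crossratio is a positive element of $\bF$, so that its $(-v)$-image, which under the inclusion $\PP^1(\bF)\hookrightarrow\partial_\infty\calT^\bF$ is precisely the tree crossratio, is non-negative, confirming (CR4).
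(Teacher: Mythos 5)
Your proposal follows the same route the paper takes: \S\ref{subsec:8.2} derives the theorem directly from Proposition~\ref{thm: framed trees} once $\varphi$ is recognized as a framing of the $\Gamma$-action on $\calT^\bF$ and the induced crossratio $[\,\cdot\,,\,\cdot\,,\,\cdot\,,\,\cdot\,]_\varphi$ is observed to be positive. You are right that the only point the paper leaves implicit is (CR4), and your remarks on injectivity of $\varphi$ and on the identification of the tree crossratio restricted to $\PP^1(\bF)$ with $-v$ of the classical projective crossratio are in order.

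However, the final inference in your justification of (CR4) does not go through as stated. You conclude that the classical projective crossratio $\lambda$ of a maximally ordered quadruple in $\PP^1(\bF)$ is a \emph{positive} element of $\bF$ and then deduce $-v(\lambda)\geq 0$, but positivity of $\lambda$ alone is insufficient: any positive infinitesimal $\lambda$ (say $0<\lambda<1$ with $v(\lambda)>0$) has $-v(\lambda)<0$. What is actually needed, and what the $n=1$ case of Proposition~\ref{lem:crBP}(4) supplies, is that $\lambda>1$: by \eqref{eqn:Rinfty} the endomorphism $R(\ell_1,\ell_2,\ell_3,\ell_4)$ is a $1\times 1$ matrix equal to the classical projective crossratio, and the proposition states that its unique eigenvalue is strictly larger than $1$ for a maximal quadruple. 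Then order compatibility of $v$ gives $v(\lambda)\leq v(1)=0$, hence $-v(\lambda)\geq 0$, which is (CR4). So the gap is a single mis-stated inequality; replacing ``positive'' by ``strictly greater than $1$'' repairs the argument.
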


\subsection{Unipotent representations of the thrice punctured sphere}\label{ex:strubel}
Let $\Gamma<\PSL(2,\bR)$ be the (unique up to conjugation) lattice such that $\Gamma\backslash\H$ is the thrice punctured sphere.
Then $\Gamma$ admits a presentation
\bqn
\Gamma=\left\<c_1,c_2,c_3:\,c_3c_2c_1\right\>\,,
\eqn
where $c_1,c_2,c_3$ are parabolic elements representing the three inequivalent cusps of $\Gamma$.
Already in this elementary example we are able to illustrate interesting features.
For every $\alpha\in\bR$ we construct maximal framed representations $\rho_\alpha\colon\Gamma\to\Sp(4,\calH(\bR))$,
where $\calH(\bR)$ is the Hahn field with exponents $\bR$ (see Example~\ref{e.real closed}(4)), 
that have the following properties:
\be
\item  for $\alpha\leq1/2$ the corresponding length functions $\gamma\mapsto L(\rho_\alpha(\gamma))$ are
not proportional and hence the corresponding currents $\mu_{\rho_\alpha}$ are distinct in the space 
of projectivized currents;
\item  for $\alpha\in\bQ$ the associated geodesic current $\mu_{\rho_\alpha}$ is a multicurve.
\ee

To this end we use the explicit coordinates obtained by Strubel on the set of $\Sp(2n,\bR)$-conjugacy classes of maximal representations
of $\Gamma$ into $\Sp(2n,\bR)$.
Namely, let 
\bqn
\overline{B}:=\{A\in \GL(n,\bR):\,\mathrm{spec} (A)\subset\{z\in\bC:\,|z|\leq1\}\}
\eqn
and 
\bqn
R=\{(X_1,X_2,X_3)\in\overline{B}^3:\, X_3\,{}^tX_2^{-1}X_1\text{ is symmetric positive definite}\}\,.
\eqn
Then, given $X:=(X_1,X_2,X_3)\in R$, the formulas
\bqn
\ba
\rho_X(c_1)&=\begin{pmatrix} X_1&0\\X_1+X_2^{-1}{}^tX_3&{}^tX_1^{-1}\end{pmatrix}\\
\rho_X(c_3)&=\begin{pmatrix} {}^tX_3^{-1}&-{}^tX_3^{-1}-X_1^{-1}{}^tX_2\\0&X_3\end{pmatrix}
\ea
\eqn
determine a representation $\Gamma\to\Sp(2n,\bR)$ that is maximal \cite[Theorem~2]{Strubel}.
Moreover every maximal representation of $\Gamma$ into $\Sp(2n,\bR)$ is conjugate to a $\rho_X$ for $X\in R$
and $\Sp(2n,\bR)$-conjugacy classes of maximal representations correspond to $\mathrm{O}(n)$-conjugacy classes in $R$
for the diagonal conjugation action of $\mathrm{O}(n)$.

If now $\bF$ is a real closed field and 
\bqn
\overline{B}_\bF=\big\{A\in\GL(n,\bF):\,\mathrm{spec}(A)\subset\{z\in\bF(\sqrt{-1}):\,|z|\leq1\}\big\}
\eqn
and 
\bqn
R_\bF:=\{(X_1,X_2,X_3)\in\overline{B}_\bF^3:\, X_3\,{}^tX_2^{-1}X_1\text{ is symmetric positive definite}\}\,,
\eqn
the following gives a source of maximal framed representations over any real closed field $\bF$:

\begin{prop}\label{prop:8.2}  For every $X:=(X_1,X_2,X_3)\in R_\bF$ the formulas for $\rho_X(c_1)$ and $\rho_X(c_3)$ define 
a maximal framed representation $\rho_X\colon\Gamma\to\Sp(2n,\bF)$.
\end{prop}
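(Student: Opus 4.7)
The plan is to deduce the proposition from Strubel's theorem \cite[Theorem~2]{Strubel} over $\bR$ by invoking the Tarski--Seidenberg transfer principle.

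\emph{Homomorphism.} Since $\Gamma$ is freely generated by $c_1$ and $c_3$ (the relation $c_3c_2c_1=1$ merely defines $c_2$), it suffices to verify that $\rho_X(c_1),\rho_X(c_3)\in\Sp(2n,\bF)$. Writing $\rho_X(c_1)$ in block form and checking the three symplectic relations ${}^tAD-{}^tCB=\Id$, ${}^tAC$ symmetric, ${}^tBD$ symmetric, only one is non-trivial and reduces to ${}^tX_1\,X_2^{-1}\,{}^tX_3=X_3\,{}^tX_2^{-1}X_1$, which is precisely the symmetry of $X_3\,{}^tX_2^{-1}X_1$ demanded by the definition of $R_\bF$. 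A parallel block computation for $\rho_X(c_3)$ reduces to the same symmetry (after taking the inverse-transpose of the identity above). These are polynomial manipulations valid over any field.

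\emph{Candidate framing.} For a hyperbolic $\gamma\in\Gamma$ represented by a fixed word $w$ in $c_1,c_3$, the matrix $\rho_X(\gamma)$ is a polynomial expression in the entries of $X_1,X_2,X_3$ and their inverses. Over $\bR$, Strubel's theorem ensures that $\rho_X$ is maximal for every $X\in R$, and by \cite[Theorem~1.9]{BP} the matrix $\rho_X(\gamma)$ is then Shilov hyperbolic with a unique attracting Lagrangian $\ell_+^w(X)\in\calL(\bR^{2n})$. The assertion ``$\rho_X(\gamma)$ admits a stable Lagrangian decomposition $\bF^{2n}=\ell\oplus\ell'$ such that the eigenvalues of $\rho_X(\gamma)$ in $\bF(\sqrt{-1})$ on $\ell$ (resp.\ $\ell'$) have absolute values strictly greater (resp.\ smaller) than $1$'' is first-order in the entries of $X$ and of $\ell$; by Tarski--Seidenberg, for each $X\in R_\bF$ there exists a unique such $\ell_+^w(X)\in\calL(\bF^{2n})$. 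Define $\varphi(\gamma^+):=\ell_+^w(X)$. Well-definedness under $\gamma_1^+=\gamma_2^+$ follows because both elements are positive powers of a common primitive hyperbolic element and therefore share their attracting Lagrangian, and $\Gamma$-equivariance is immediate from $\rho_X(\eta\gamma\eta^{-1})=\rho_X(\eta)\rho_X(\gamma)\rho_X(\eta)^{-1}$.

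\emph{Triple maximality.} It remains to check that $\varphi$ sends positively oriented triples to maximal triples. Given a positively oriented $(\gamma_1^+,\gamma_2^+,\gamma_3^+)\in\hg^{(3)}$ with each $\gamma_i$ represented by a fixed word, the Maslov cocycle $\tau(\varphi(\gamma_1^+),\varphi(\gamma_2^+),\varphi(\gamma_3^+))$ from \S\ref{subsubsec:max-tr-quadr} is the signature of a quadratic form whose matrix depends polynomially on $X$, so the condition $\tau=n$ is first-order. Over $\bR$ it holds for all $X\in R$ by Strubel's theorem combined with the equivalence of the various notions of maximality recalled in the introduction; by Tarski--Seidenberg it transfers to all $X\in R_\bF$. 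The main technical obstacle is to formulate each of the three semi-algebraic conditions (Shilov hyperbolicity, identification of the attracting Lagrangian, and the signature of the Maslov cocycle) as a genuinely first-order formula in the coordinates of $X$; once this bookkeeping is carried out, the transfer argument is routine.
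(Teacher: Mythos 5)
Your proof is correct and follows precisely the Tarski--Seidenberg transfer strategy that the paper itself invokes: the paper declares the proof ``beyond the scope of this paper'' and simply notes it is ``an easy application of the Tarski--Seidenberg principle'' together with the semi-algebraicity of $R$, which is exactly what you flesh out (homomorphism check, first-order definability of the Shilov decomposition and of the Maslov signature, transfer from Strubel's $\bR$-statement).
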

The proof is beyond the scope of this paper.  Let us just mention that it is an easy application of the Tarski--Seidenberg principle
(see \cite[Proposition~5.1.3]{BCR}) and the fact that $R$ parametrizes a semi-algebraic subset
of $\Hom(\Gamma,\Sp(2n,\bR))$.

We are interested in the case in which $\rho_X(c_1)$,  $\rho_X(c_2)$ and $\rho_X(c_3)$ are all unipotent,
which is equivalent to $X_1,-X_2,X_3$ being unipotent.  
This is never the case if $n=1$ as one can see from the above formulas,
while already for $\Sp(4,\bR)$ there are interesting examples with unipotent boundary holonomy.
We restrict here to the subset of $R$ consisting of triple $(X_1,X_2,X_3)$
such that $X_1, -X_2, X_3$ are unipotent and $X_3{}^tX_2^{-1}X_1=\Id$.  The quotient by $\mathrm{O}(2)$-conjugation of such triples can be parametrized by 
\bqn
\left\{
\begin{pmatrix}1&\frac{4}{x}\\0&1\end{pmatrix},
\begin{pmatrix}-3+y&-x\\ \frac{(y-2)^2}{x}&1-y\end{pmatrix},
\begin{pmatrix}1+y&\frac{y^2}{x}\\ -x&1-y\end{pmatrix}, x>0, y\in\bR\right\}\,.
\eqn
The corresponding matrices are then
\bqn
\ba
\rho_X(c_1)=&
\begin{pmatrix} X_1&0\\X_1+X_2^{-1}{}^tX_3&{}^tX_1^{-1}\end{pmatrix}=
\begin{pmatrix}
	1&\frac{4}{x}&0&0\\
	0&1&0&0\\
	2&\frac{4}{x}&1&0\\
	-\frac{4}{x}&2&-\frac{4}{x}&1
\end{pmatrix}\\
\rho_X(c_3)=&
\begin{pmatrix} {}^tX_3^{-1}&-{}^tX_3^{-1}-X_1^{-1}{}^tX_2\\0&X_3\end{pmatrix}=
\begin{pmatrix}
	1-y&x&-2&-x-\frac{y^2}{x}\\
	-\frac{y^2}{x}&1+y&x+\frac{y^2}{x}&-2\\
	0&0&1+y&\frac{y^2}{x}\\
	0&0&-x&1-y
\end{pmatrix}\,.
\ea
\eqn
The above formulas allow us to consider $\rho_X$ as a representation of $\Gamma$ with coefficients 
in the ring $\bR[x,\frac{1}{x},y]$.
Now for every $\alpha\in\bR$ we consider the ring morphism of $\bR[x,\frac{1}{x},y]$ into the Hahn field $\calH(\bR)$
defined by sending $x$ to $x$ and $y$ to $x^\alpha$.  In this way we obtain for every $\alpha\in\bR$
a representation $\rho_\alpha\colon\Gamma\to\Sp(4,\mathcal{H}(\bR))$.  
It is easy to verify that the triple $X=(X_1,X_2,X_3)$ with $y=x^\alpha$ is in $R_{\calH(\bR)}$
and it follows from Proposition~\ref{prop:8.2} that $\rho_\alpha$ is maximal framed.    
For the computation of the length function $L$
it is not difficult to see that if $g\in\Sp(4,\bF)$, where $\bF$ is real closed non-Archimedean with an order compatible valuation, then 
\bqn
L(g)=-v(T(g))\,.
\eqn
where $T(g)=(\tr g)^2-\tr g^2-4$.
In our case we obtain
\bqn
T(\rho_\alpha(c_1^{-1}c_3))
=4(4x^2+32x^{-4+4\alpha}+(18+8x^{2\alpha})+4x^{-2}(16+12x^{2\alpha}+x^{4\alpha}))\,
\eqn
and
\bqn
T(\rho_\alpha(c_1^{-1}c_2))
=4(50+4x^2+8x^{2\alpha}+4x^{-2}(16+12x^{2\alpha}+x^{4\alpha}))
\eqn
so
\bqn
v(T(\rho_\alpha(c_1^{-1}c_3)))=\min(-2,-4+4\alpha,-2+2\alpha)
=\begin{cases}
-2&\alpha\geq\frac12\\
-4+4\alpha&\alpha\leq\frac12
\end{cases}
\eqn
and
\bqn
v(T(\rho_\alpha(c_1^{-1}c_2)))=\min(-2,-2+4\alpha,-2+2\alpha)
=\begin{cases}
-2&\alpha\geq0\\
-2+4\alpha&\alpha\leq0\,.
\end{cases}\,
\eqn
We deduce that for $\alpha\leq1/2$ the length functions $\gamma\mapsto L(\rho_\alpha(\gamma))$ are distinct
even when considered up to positive scaling.  It is easy to verify that 
\bqn
\bQ(\rho_\alpha)=\begin{cases}
\bQ(x^\alpha)&\text{ if } \alpha\in\bQ\smallsetminus\{0\}\\
\bQ(x)&\text{ if } \alpha=0\\
\bQ(x,x^\alpha)&\text{ if } \alpha\notin\bQ\,.\\
\end{cases}
\eqn
As a result, the image of the valuation is
\bqn
v(\bQ(\rho_\alpha))=\begin{cases}
\alpha\bZ&\text{ if }\alpha\in\bQ\smallsetminus\{0\}\\
\bZ&\text{ if }\alpha=0\\
\bZ+\alpha\bZ&\text{ if }\alpha\notin\bQ\,,\\
\end{cases}
\eqn
which implies by Theorem~\ref{thm_intro:multicurves} that the geodesic current corresponding to $\rho_\alpha$ is a multicurve if $\alpha\in\bQ$.


\end{document}